\newtheorem{theorem}{Theorem}[section]
\newtheorem{lemma}[theorem]{Lemma}
\newtheorem{proposition}[theorem]{Proposition}
\newtheorem{definition}[theorem]{Definition}
\newtheorem{remark}[theorem]{Remark}
\newcommand{\deb}{\rightharpoonup}
\newcommand{\R}{\mathbb{R}}
\newcommand{\Rp}{\mathbb{R^{+}}}
\newcommand{\G}{\mathcal{G}}
\newcommand{\N}{\mathbb{N}}
\newcommand{\E}{\mathbb{E}}
\newcommand{\eps}{\varepsilon}
\newcommand{\ep}{\varepsilon}
\newcommand{\V}{\mathbb{V}}
\newcommand{\HH}{\mathcal{H}}
\newcommand{\K}{\mathcal{K}}
\newcommand{\f}{\frac}
\newcommand{\ee}{\textit{e}}
\newcommand{\D}{\mathcal{E}}
\newcommand{\la}{\lambda}
\newcommand{\Z}{\mathbb{Z}}
\newcommand{\Q}{\mathcal{Q}}
\newcommand{\EE}{\mathcal{E}}
\newcommand{\dx}{\,dx}
\newcommand\vv{\textsc{v}}
\newcommand\ww{\textsc{w}}
\tikzstyle{nodino}=[circle,draw,fill,inner sep=0pt,minimum size=0.5mm]
\tikzstyle{infinito}=[circle,inner sep=0pt,minimum size=0mm]
\tikzstyle{nodo}=[circle,draw,fill,inner sep=0pt, minimum size=0.5*width("k")]
\tikzstyle{nodo_vuoto}=[circle,draw,inner sep=0pt, minimum size=0.5*width("k")]
\tikzset{every loop/.style={min distance=10mm,in=300,out=240,looseness=10}}
\tikzset{place/.style={circle,thick,draw=blue!75,fill=blue!20,minimum
		size=6mm}}
\tikzset{place2/.style={circle,thick,draw=red!75,fill=red!20,minimum
		size=6mm}}
\title[ ]{Normalized ground states for Schr\"odinger equations on metric graphs with nonlinear point defects }
\author[ ]{Filippo Boni}
\address[F. Boni]{Scuola Superiore Meridionale, Largo S. Marcellino, 10, 80138, Napoli, Italy.}
\email{f.boni@ssmeridionale.it}
\author[ ]{Simone Dovetta}
\address[S. Dovetta]{Politecnico di Torino, Dipartimento di Scienze Matematiche "G.L. Lagrange", Corso Duca degli Abruzzi 24, 10129, Torino, Italy.}
\email{simone.dovetta@polito.it}
\author[ ]{Enrico Serra}
\address[E. Serra]{Politecnico di Torino, Dipartimento di Scienze Matematiche "G.L. Lagrange", Corso Duca degli Abruzzi 24, 10129, Torino, Italy.}
\email{enrico.serra@polito.it}
\begin{document}

\begin{abstract}
	We investigate the existence of normalized ground states for Schr\"odinger equations on noncompact metric graphs in presence of nonlinear point defects, described by nonlinear $\delta$-interactions at some of the vertices of the graph. For graphs with finitely many vertices, we show that ground states exist for every mass and every $L^2$-subcritical power. For graphs with infinitely many vertices, we focus on periodic graphs and, in particular, on $\Z$-periodic graphs and on a prototypical $\Z^2$-periodic graph, the two--dimensional square grid. We provide a set of results unravelling nontrivial threshold phenomena both on the mass and on the nonlinearity power, showing the strong dependence of the ground state problem on the interplay between the degree of periodicity of the graph, the total number of point defects and their dislocation in the graph.
\end{abstract}

\maketitle


\section{Introduction}

In this paper we analyze existence and nonexistence of normalized solutions to the Schr\"odinger equation on metric graphs in the presence of   {\em nonlinear point defects} at some of the vertices. Precisely, given a connected metric graph $\G=(\V_\G,\E_\G)$ and a subset $V \subseteq \V_\G$ of its vertices, for fixed $\mu >0$ we study the existence of a continuous $u:\G \to \R$ satisfying
\begin{equation}
\label{prob}
\begin{cases}
u'' = \lambda u	&\text{ on every edge of $\G$},\\
\|u\|_{L^2(\G)}^2 = \mu & \\
\displaystyle\sum_{\ee\succ \vv}u_{e}'(\vv)=-|u(\vv)|^{q-2}u(\vv)&  \text{ at every } \vv\in V,\\
\displaystyle\sum_{e\succ\vv} u_e'(\vv)=0	&\text{ at  every } \vv \in \V_\G \setminus V\\
\end{cases}
\end{equation}
for some $\lambda \in \R$.
In problem \eqref{prob}, the exponent $q$ satisfies $q \in (2,4)$ and the symbol $e\succ\vv$ means that the sum is extended to all edges incident at the vertex $\vv$.

The last requirement in \eqref{prob} is the Kirchhoff, or natural, boundary condition, while 
\[
\displaystyle\sum_{\ee\succ \vv}u_{e}'(\vv)=-|u(\vv)|^{q-2}u(\vv), \qquad \forall\, \vv \in V,
\]
can be thought of as representing the effect of
a deep attractive potential well or also a strong attractive defect at all vertices $\vv \in V$. In the literature it is customary to call such 
condition a {\em nonlinear $\delta$--interaction} and interpret it as a model for strongly localized, point-like defects or inhomogeneities in the medium that supports the  propagation (see e.g. \cite{ABR} for a wide overview on non-Kirchhoff vertex conditions on graphs). Models with concentrated nonlinearities have been proposed e.g. in semiconductor theory \cite{JPS, N} to describe the quantum dynamics in resonant tunneling diodes, as well as the effect of the confinement of charges in small regions. 

Solutions to \eqref{prob} correspond, {\em formally}, to standing waves for the nonlinear Schr\"odinger equation
\[
i\partial_t \psi +\partial_{xx} \psi + \sum_{\vv\in V} \delta_{\vv} |\psi|^{q-2}\psi = 0\qquad \text{ on } \G
\]
via the ansatz $\psi(t,x) = e^{i\lambda t}u(x)$. Finally, the constraint on the $L^2$ norm of $u$ in \eqref{prob}, which accounts for the specification {\em normalized} attached to the solutions of \eqref{prob}, is a standard requirement on the {\em mass} of $u$ (sometimes also interpreted as the number of particles in the condensate), a quantity that is preserved along the evolution in time.

The solutions of \eqref{prob} can be found variationally as the critical points of the energy functional 
\[
E_{q,V}(u,\G) := \frac12 \|u'\|_{L^{2}(\G)}^{2}-\frac1q\sum_{\vv\in V}|u(\vv)|^q
\]
on the space of mass-constrained functions
\[
H^{1}_{\mu}(\G):=\left\{w\in H^{1}(\G) \mid \|w\|_{L^{2}(\G)}^{2}=\mu\right\}
\]
(for standard definitions of Lebesgue and Sobolev spaces on metric graphs see e.g. \cite{AST}).
In this respect, the condition $q \in (2,4)$ corresponds to the so-called $L^{2}$-subcritical case, i.e. the case in which the energy functional $E_{q,V}$ is bounded from below in $H^{1}_{\mu}(\G)$ for every value of the mass $\mu$ and every set $V$ (see Lemma \ref{lem:level} below). 

Due to their possible relevance in the applications, in this paper we are only concerned with {\em ground states}, namely with functions $u \in H^1_\mu(\G)$ solving the problem
\[
E_{q,V}(u,\G) = \inf_{w \in H^1_\mu(\G)} E_{q,V}(w,\G) =: \EE_{q,V}(\mu,\G).
\]

\smallskip
\noindent Although the study of nonlinear Schr\"odinger equations on metric graphs is relatively recent, there is by now a very rich and steadily increasing literature, mostly devoted to the search of ground states or bound states in presence of the diffused, standard nonlinearity $\int_\G |u|^p\dx$, under various conditions on $p$, both for the energy functional (see for instance \cite{acfn_jde,acfn_aihp,ADST,AST,AST1,AST2,BMP,BDL20,BDL21,CJS,D-per,DT22,KMPX,KNP,NP,PS20,PSV}) and for the action functional (where $\lambda$ is fixed and the mass is unknown, see e.g. \cite{ACT,DDGS,DDGS2,pankov}). A certain attention has been devoted also to localized nonlinearities in the form $\int_\K|u|^p\,dx$, that is when the nonlinear term is still some $L^p$ norm of the function but restricted to a prescribed subset $\K$ of the graph $\G$ (see \cite{DT,LZL,ST-JDE,ST-NA,T-JMAA}). 

Conversely, the analysis of nonlinear point defects on graphs, i.e. nonlinear terms as in $E_{q,V}$ above, seems to be at its very beginning and, up to our knowledge, the available results concern existence and nonexistence of normalized ground states for models involving both standard and pointwise nonlinearities on noncompact graphs with finitely many vertices and edges (see \cite{ABD,BC,BD,BD22}, and also \cite{acfn_jde,acfn_aihp} for the case of linear point defects on star graphs). These first investigations for the doubly nonlinear model proved that the presence of pointwise nonlinearities is a source of new phenomena, sensibly different from the ones usually observed with standard nonlinearities only. 

\begin{figure}[t]
	\centering
	\includegraphics[width=0.5\textwidth]{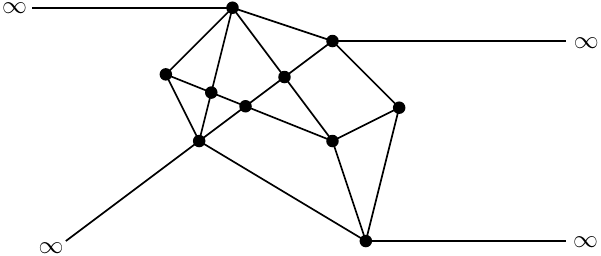}
	\caption{A noncompact graph in $\mathbf{G}$ with finitely many vertices and edges.}
	\label{fig:rette}
\end{figure}

\smallskip
The purpose of the present work is to investigate normalized ground states for the model $E_{q,V}$ with the sole pointwise nonlinearity. Our aim is to work at a high level of generality, enquiring how the presence of point defects at some of the vertices affects the existence of ground states on some classes of noncompact graphs. In particular, for reasons that will be clear once we state our main results, our interest will be mainly devoted to noncompact {\em periodic graphs}, i.e. graphs with infinitely many vertices and edges arranged in some periodic pattern. However, for the sake of completeness, we will also provide some results for noncompact graphs with finitely many vertices and edges.

To simplify the notation, in what follows we denote by $\mathbf{G}$ the class of graphs $\G = (\V_\G, \E_\G)$ such that
\begin{itemize}
\item $\G$ is connected and has an at most countable number of edges;
\item $\deg(\vv) < \infty$ for every $\vv \in \mathbb{V}_\G$, where $\deg(\vv)$ denotes the degree of the vertex $\vv$, i.e.\ the  number of edges incident at $\vv$;
\item $\displaystyle\inf_{e \in {\mathbb E}_\G} |e| >0$, where $|e|$ denotes the length of the edge $e$.
\end{itemize}
All the graphs considered in this paper will be noncompact. For graphs in the class $\bf G$ noncompactness is ascribable either to the presence of some unbounded edges, as usual identified with a half-line, or to the set $\E_\G$ being infinite.  

It is reasonable to expect the existence of ground states of $E_{q,V}$ on a graph $\G$ to depend not only on topological or metric properties of the graph itself, but also on the total number of point defects (i.e. the cardinality of the set $V$) and perhaps on how they are dislocated in the structure. Actually, with our first result we show that, if the graph has finitely many vertices, the problem is insensitive to any other features of $\G$ and $V$ and ground states always exist.
\begin{theorem}
\label{thm:n-cpt-hl}
Let $\G\in \mathbf{G}$ be noncompact with $\#\V_\G < \infty$. Then, for every nonempty $V\subseteq \V_\G$, every $q\in(2,4)$ and every $\mu>0$, there results $\EE_{q,V}(\mu,\G)<0$ and ground states of $E_{q,V}$ in $H_\mu^1(\G)$ exist. 
\end{theorem}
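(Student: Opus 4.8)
The plan is to split the statement into the two claims $\EE_{q,V}(\mu,\G)<0$ and the attainment of the infimum, and to treat them in turn.

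\emph{Negativity of the level.} First I would observe that $\#\V_\G<\infty$ forces $\#\E_\G<\infty$ (finitely many vertices of finite degree carry only finitely many edges), so noncompactness must come from the presence of at least one half-line. Fix any $\vv\in V$ and, writing $d(x):=\mathrm{dist}_\G(x,\vv)$ for the (Lipschitz) distance from $\vv$, consider the test functions $u_\lambda(x):=a_\lambda e^{-\lambda d(x)}$, $\lambda>0$, where $a_\lambda>0$ is chosen so that $\|u_\lambda\|_{L^2(\G)}^2=\mu$. These are continuous on $\G$ because $d$ is, and $u_\lambda(\vv)=a_\lambda$. Since $|d'|=1$ a.e.\ on $\G$, the mass constraint gives at once the clean identity $\|u_\lambda'\|_{L^2(\G)}^2=\lambda^2\|u_\lambda\|_{L^2(\G)}^2=\lambda^2\mu$, while $a_\lambda^2=\mu\big/\!\int_\G e^{-2\lambda d}\dx$. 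Each half-line contributes a term of order $1/(2\lambda)$ to $\int_\G e^{-2\lambda d}\dx$ and the finitely many bounded edges contribute $O(1)$, so $\int_\G e^{-2\lambda d}\dx=O(1/\lambda)$ as $\lambda\to0^+$ and hence $a_\lambda^q\gtrsim\lambda^{q/2}$. Retaining only the defect at $\vv$ in the nonlinear term, I would then estimate
\[
E_{q,V}(u_\lambda,\G)\le\frac{\mu\lambda^2}{2}-\frac{a_\lambda^q}{q}\le\frac{\mu\lambda^2}{2}-c\,\lambda^{q/2}
\]
for small $\lambda$ and some $c>0$. Since $q\in(2,4)$ gives $q/2<2$, the negative term dominates as $\lambda\to0^+$, whence $\EE_{q,V}(\mu,\G)\le E_{q,V}(u_\lambda,\G)<0$ for $\lambda$ small.

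\emph{Existence via the direct method.} Let $(u_n)\subset H^1_\mu(\G)$ be a minimizing sequence, $E_{q,V}(u_n,\G)\to\EE_{q,V}(\mu,\G)=:\EE<0$. By Lemma \ref{lem:level} the functional is bounded below, and the estimate controlling $\sum_{\vv\in V}|u(\vv)|^q$ by the $H^1$-norm yields a uniform bound on $\|u_n'\|_{L^2(\G)}$; with the mass constraint this makes $(u_n)$ bounded in $H^1(\G)$. Up to subsequences $u_n\deb u$ in $H^1(\G)$ and, on every bounded portion of the graph, $u_n\to u$ uniformly. Here $\#\V_\G<\infty$ enters decisively: each $\vv\in V$ lies in a bounded neighbourhood, so $u_n(\vv)\to u(\vv)$ and, the sum over $V$ being finite, $\sum_{\vv\in V}|u_n(\vv)|^q\to\sum_{\vv\in V}|u(\vv)|^q$. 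Combining this with the weak lower semicontinuity of $u\mapsto\|u'\|_{L^2(\G)}^2$ gives $E_{q,V}(u,\G)\le\liminf_n E_{q,V}(u_n,\G)=\EE$.

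\emph{Ruling out the loss of mass.} The main obstacle is that, the graph being noncompact, the weak limit may satisfy $m:=\|u\|_{L^2(\G)}^2\le\mu$ with strict inequality, some mass escaping to infinity along the half-lines. Since $E_{q,V}(u,\G)\le\EE<0$ we have $u\not\equiv0$ and $N:=\sum_{\vv\in V}|u(\vv)|^q>0$. If $m<\mu$, I would test with the rescaled function $\tilde u:=\sqrt{\mu/m}\,u\in H^1_\mu(\G)$: writing $t:=\mu/m>1$ and $K:=\|u'\|_{L^2(\G)}^2$,
\[
E_{q,V}(\tilde u,\G)=\frac{t}{2}K-\frac{t^{q/2}}{q}N<t\Big(\frac12K-\frac1qN\Big)=t\,E_{q,V}(u,\G)<E_{q,V}(u,\G)\le\EE,
\]
where the first strict inequality uses $t^{q/2}>t$ (as $q/2>1$) and $N>0$, and the second uses $E_{q,V}(u,\G)<0$. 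This contradicts the minimality of $\EE$ on $H^1_\mu(\G)$, so $m=\mu$; then $u\in H^1_\mu(\G)$ together with $E_{q,V}(u,\G)\le\EE$ forces $E_{q,V}(u,\G)=\EE$, and $u$ is the sought ground state. The delicate point is exactly this no-loss-of-mass step, which succeeds because the pointwise nonlinearity is captured verbatim in the weak limit (finiteness of $V$) and because its mass-scaling exponent $q/2$ exceeds $1$, making concentration of mass energetically favourable.
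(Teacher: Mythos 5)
Your proof is correct, and it follows the same two-step architecture as the paper (explicit exponential test functions for the negativity of the level, then the direct method with the finiteness of $V$ guaranteeing exact convergence of the pointwise nonlinear term, and a rescaling argument to exclude loss of mass). There are two genuine, and in fact welcome, simplifications with respect to the paper's route. First, for the negativity you fix the mass $\mu$ and send the decay rate $\lambda\to0^+$, obtaining $\EE_{q,V}(\mu,\G)<0$ for each $\mu$ in one stroke; the paper instead builds functions whose mass tends to $0$, concludes that $\mu_q^*=0$, and then invokes the concavity/rescaling machinery of Lemma \ref{lem:mu-star} to propagate negativity to all masses. Your computation is sound: $|d'|=1$ a.e.\ on $\G$ gives $\|u_\lambda'\|_{L^2(\G)}^2=\lambda^2\mu$ exactly, and the bound $\int_\G e^{-2\lambda d}\dx\le N/(2\lambda)+|\K|$ yields $a_\lambda^q\gtrsim\lambda^{q/2}$ with $q/2<2$. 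Second, your exclusion of the dichotomy $0<m<\mu$ is a one-line contradiction from $E_{q,V}(\sqrt{\mu/m}\,u,\G)<\tfrac{\mu}{m}E_{q,V}(u,\G)$ together with $E_{q,V}(u,\G)\le\EE<0$; the paper's Lemma \ref{lem:compact-crit} instead runs the full Brezis--Lieb-type splitting $E(u_n)=E(u_n-u)+E(u)+o(1)$ with two-sided estimates. Your shortcut is legitimate precisely because the level is strictly negative here (as it always is in Theorem \ref{thm:n-cpt-hl}, since $\lambda(\G)=0$); the paper's longer argument is designed to work under the weaker hypothesis $\D_{q,V}(\mu,\G)<\tfrac{\lambda(\G)}{2}\mu$, which must also cover situations where the ground state level is nonnegative. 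So what the paper's approach buys is reusability of the compactness lemma across all the settings of the article, while yours buys a shorter, self-contained proof of this particular theorem.
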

Observe that, if $\G\in\mathbf{G}$ is noncompact and such that $\#\V_\G<\infty$, then $\G$ has finitely many edges and at least one is a half-line (see Figure \ref{fig:rette}). For this type of graphs, the fact that ground states with point defects always exist already marks a difference with models involving standard nonlinearities, for which it is by now well-known that there are both topological and metric conditions ruling out existence of ground states (see e.g. \cite{AST}).

Let us then consider graphs in $\mathbf{G}$ with infinitely many vertices, i.e. $\#\V_\G=\infty$. Since this class of graphs is extremely large and contains objects sensibly different from each other, here we will not  treat it in its full generality. In fact, also in view of their possible relevance, we focus on periodic graphs, i.e. graphs with infinitely many vertices and edges arranged in a given periodic fashion. 

We avoid reporting a rigorous and general definition of periodic graph, for which we refer the interested reader to \cite[Definition 4.1.1]{BK}, and we rather focus in detail on two subclasses of periodic graphs. First, we will consider $\Z$-periodic graphs, for us being graphs obtained gluing together in a $\Z$-symmetric pattern infinitely many copies of a given compact graph (see Figure \ref{fig:Zper} for some examples and Section \ref{sec:prel} for the precise definition). Second, we will focus on the two-dimensional square grid (see Figure \ref{fig:Q}), which we take as a prototypical model for $\Z^2$-periodic graphs. 

\begin{figure}[t]
	\centering
	\subfloat{\includegraphics[width=0.4\columnwidth]{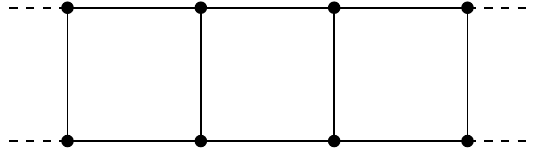}}\qquad\quad
	\subfloat{\includegraphics[width=0.45\columnwidth]{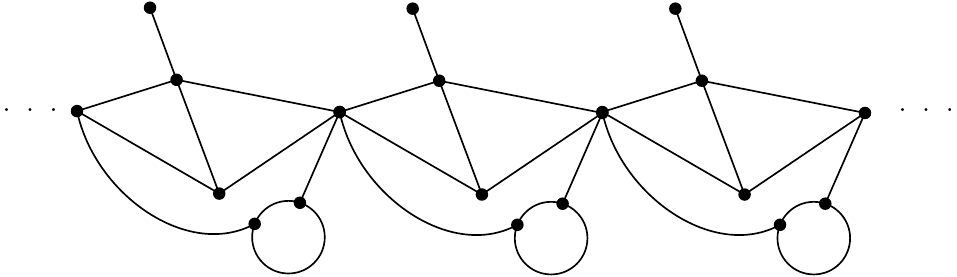}}
	\caption{Examples of $\Z$-periodic graphs.}
	\label{fig:Zper}
\end{figure}

As for $\Z$-periodic graphs, our main result is the following.
\begin{theorem}
\label{thm:Z-per}
Let $\G\in\mathbf{G}$ be a $\Z$-periodic graph. Then, for every nonempty $V\subseteq\V_\G$, every $q\in(2,4)$ and every $\mu>0$, there results $\EE_{q,V}(\mu,\G)<0$. Moreover, 
\begin{itemize}
\item[(i)] if $\#V<+\infty$,  ground states of $E_{q,V}$ in $H_\mu^1(\G)$ exist for every $q\in (2,4)$ and every $\mu>0$;
\item[(ii)] if $\#V=+\infty$ and $V$ is a $\Z$-periodic subset of $\V_\G$, then ground states of $E_{q,V}$ in $H_\mu^1(\G)$ exist for every $q\in (2,4)$ and every $\mu>0$;
\item[(iii)] there exists $V$ (with $\#V=+\infty$), for which ground states of $E_{q,V}$ in $H_\mu^1(\G)$ do not exist for any $q\in(2,4)$ and any $\mu>0$.
\end{itemize}
\end{theorem}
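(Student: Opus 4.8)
The plan is to handle all three parts with one variational engine, the direct method for $E_{q,V}$ on $H^1_\mu(\G)$, and to isolate the real difficulty in the loss of compactness due to noncompactness of $\G$. The preliminary ingredient is the strict negativity $\EE_{q,V}(\mu,\G)<0$, which I would obtain by fixing $\vv_0\in V$ and, using connectedness together with $\inf_e|e|>0$, placing a mass-$\mu$ ``tent'' peaked at $\vv_0$ along a path of length $\ell$: the kinetic energy is of order $\mu\ell^{-2}$ while the defect term is of order $\mu^{q/2}\ell^{-q/2}$, and since $q<4$ forces $q/2<2$ the defect term dominates as $\ell\to\infty$. The same computation shows that every near-minimizer $u$ (with $E_{q,V}(u)<\EE_{q,V}(\mu,\G)+\eps$) satisfies $\sum_{\vv\in V}|u(\vv)|^q\ge c_0>0$ uniformly in $\eps$. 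This feeds the scaling identity that drives both existence proofs: for $\sigma>1$ and $v=\sqrt\sigma\,u$,
\[
E_{q,V}(v)=\sigma E_{q,V}(u)-\frac{\sigma^{q/2}-\sigma}{q}\sum_{\vv\in V}|u(\vv)|^q,
\]
which, together with $q/2>1$ and the bound $c_0$, yields $\EE_{q,V}(\sigma\mu,\G)<\sigma\,\EE_{q,V}(\mu,\G)$ and hence the strict subadditivity $\EE_{q,V}(\mu,\G)<\EE_{q,V}(m,\G)+\EE_{q,V}(\mu-m,\G)$ for all $0<m<\mu$.

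For part (i) I run the direct method. Boundedness of a minimizing sequence $u_n$ in $H^1(\G)$ follows from the Gagliardo--Nirenberg/trace bound $|u(\vv)|^q\le C(\|u'\|_{L^2(\G)}^{q/2}+1)$ at fixed mass and $q/2<2$, so the kinetic term absorbs the defect term. Since $\#V<\infty$, local compactness of $H^1\hookrightarrow C^0$ on a bounded subgraph containing $V$ gives $u_n(\vv)\to u(\vv)$ for every $\vv\in V$, so the whole defect term passes to the limit and weak lower semicontinuity of the kinetic part yields $E_{q,V}(u)\le\EE_{q,V}(\mu,\G)$ for the weak limit $u$. The limit cannot vanish on $V$ (else $\EE_{q,V}\ge0$), so $m:=\|u\|_{L^2(\G)}^2>0$; if $m<\mu$, rescaling $u$ to mass $\mu$ and applying the scaling inequality produces an admissible competitor strictly below $\EE_{q,V}(\mu,\G)$, a contradiction. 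Hence $m=\mu$ and $u$ is a ground state.

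Part (ii) is where the main obstacle lies, since the defect term is now an infinite sum that weak limits may lose. I would exploit the $\Z$-translation invariance of both $\G$ and $V$; write $\tau$ for the period shift and $K$ for the reference cell. First I \emph{exclude vanishing}: from $\sum_{\vv\in V}|u(\vv)|^q\le(\sup_k\|u\|_{L^\infty(\tau^kK)})^{q-2}\sum_{\vv\in V}|u(\vv)|^2$ and the $\Z$-periodic trace bound $\sum_{\vv\in V}|u(\vv)|^2\le C\|u\|_{H^1(\G)}^2$, vanishing of the per-cell mass would kill the defect term and force $\EE_{q,V}\ge0$, impossible. Thus some cell retains a definite amount of $L^\infty$-mass, and translating $u_n$ by a suitable $\tau^{-k_n}$ (which preserves the minimizing property by periodicity) I may assume the weak limit $u$ is nonzero. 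I then rule out dichotomy by a Brezis--Lieb splitting $E_{q,V}(u_n)=E_{q,V}(u)+E_{q,V}(u_n-u)+o(1)$ for the vertex sum (justified by the uniform $H^1$ and $\ell^q(V)$ bounds together with pointwise vertex convergence), which gives $\EE_{q,V}(\mu,\G)\ge\EE_{q,V}(m,\G)+\EE_{q,V}(\mu-m,\G)$ with $m\in(0,\mu]$; strict subadditivity then forces $m=\mu$, no mass escapes, and $u$ is a ground state.

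For part (iii) I would produce nonexistence by breaking periodicity along a single orbit. Fix a vertex $\vv_0$, let $W=\{\tau^k\vv_0:k\in\Z\}$ be its periodic orbit (so $\EE_{q,W}(\mu,\G)$ is attained by part (ii)), and set $V=\{\tau^k\vv_0:k\ge0\}$, which is infinite and not $\Z$-periodic. Translating a $W$-ground state far in the positive direction, its $V$-energy converges to $E_{q,W}$ because the vertex values of an $H^1$ function are $\ell^q$-summable, giving $\EE_{q,V}(\mu,\G)=\EE_{q,W}(\mu,\G)$. If a ground state $u$ for $V$ existed, then since $V\subset W$,
\[
\EE_{q,W}(\mu,\G)\le E_{q,W}(u)=\EE_{q,V}(\mu,\G)-\frac1q\sum_{k<0}|u(\tau^k\vv_0)|^q=\EE_{q,W}(\mu,\G)-\frac1q\sum_{k<0}|u(\tau^k\vv_0)|^q,
\]
so $u$ would be a $W$-ground state vanishing at $\tau^k\vv_0$ for all $k<0$; but a nonnegative representative of a $W$-ground state is strictly positive (at a zero, the vertex condition forces all incident derivatives to vanish, and ODE uniqueness on each edge propagates the zero through the connected graph), a contradiction. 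The hard step throughout is part (ii): making the concentration--compactness dichotomy rigorous on a periodic metric graph, in particular the vertex-sum Brezis--Lieb identity and the strict subadditivity with uniform constants; once these and the strict positivity of ground states are in hand, everything else reduces to the scaling inequality.
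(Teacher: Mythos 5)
Your proposal is correct and, for the negativity statement and parts (i)--(ii), follows essentially the same route as the paper: explicit small-mass test functions exploiting $q<4$, coercivity from the Gagliardo--Nirenberg/trace bounds, vertex convergence plus weak lower semicontinuity for finite $V$, and exclusion of vanishing via $\sum_{\vv\in V}|u_n(\vv)|^q\le\|u_n\|_{L^\infty(\G)}^{q-2}\sum_{\vv\in V}|u_n(\vv)|^2$ followed by a periodic translation and a Brezis--Lieb splitting for $\Z$-periodic $V$; the only organizational difference is that you extract strict subadditivity of $\mu\mapsto\EE_{q,V}(\mu,\G)$ as a standalone lemma from the scaling identity and the uniform lower bound on the defect term, whereas the paper runs the equivalent rescaling computation directly inside its compactness lemmas. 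One point to be careful about in the negativity step: a tent literally supported on a path and set to zero elsewhere is not continuous (hence not in $H^1(\G)$) at path vertices carrying side edges, and closing those edges off costs an extra kinetic energy of order $\mu$ which destroys the estimate; the construction works if you take the tent radial in the graph distance, $u=h(1-d(\cdot,\vv_0)/\ell)_+$, for which linear volume growth of balls in a $\Z$-periodic graph gives exactly your scaling $\|u'\|_2^2\sim\mu\ell^{-2}$ versus $|u(\vv_0)|^q\sim(\mu/\ell)^{q/2}$ (the paper instead uses a cell-wise constant exponential profile, which avoids the issue for the same reason). Part (iii) is where you genuinely diverge: you take the half-orbit $V=\{\tau^k\vv_0:k\ge0\}$, show $\EE_{q,V}=\EE_{q,W}$ for the full orbit $W$ by translating a $W$-ground state to infinity, and derive a contradiction from the strict positivity of a nonnegative $W$-ground state (propagated by the vertex conditions and ODE uniqueness); the paper instead removes the sparse set $\{n(n+1)\}$ from the full orbit and shows directly that any positive competitor can be translated so as to strictly increase the defect sum. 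Both arguments are valid; yours additionally identifies the (non-attained) infimum with the periodic level, while the paper's avoids invoking attainment for $W$ and the positivity lemma, at the price of a more ad hoc choice of $V$.
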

Theorems \ref{thm:n-cpt-hl}--\ref{thm:Z-per} establish a similarity between graphs with finitely many vertices and $\Z$-periodic graphs, since in both classes of graphs the ground state level $\EE_{q,V}$ is always strictly negative and ground states always exist when the number of point defects is finite. However, since on $\Z$-periodic graphs it is possible to have infinitely many point defects, the existence of ground states may be affected also by a loss of compactness at infinity. On the one hand, Theorem \ref{thm:Z-per}(ii) shows that, when the set of point defects $V$ is infinite  but is itself $\Z$-periodic (i.e. it has the same periodicity of the graph, see Section \ref{sec:prel} below for a precise definition), one recovers enough compactness to ensure existence of ground states. On the other hand, even though at a first glance the periodicity of $V$ may seem a quite restrictive assumption, Theorem \ref{thm:Z-per}(iii) proves that, dropping it, one can easily exhibit infinite sets $V$  for which existence of ground states is ruled out, independently of $q$ and $\mu$.

\begin{figure}[]
	\centering
	\begin{tikzpicture}[xscale= 0.5,yscale=0.5]
	\draw[step=2,thin] (0,0) grid (8,8);
	\foreach \x in {0,2,...,8} \foreach \y in {0,2,...,8} \node at (\x,\y) [nodo] {};
	\foreach \x in {0,2,...,8}
	{\draw[dashed,thin] (\x,8.2)--(\x,9.2) (\x,-0.2)--(\x,-1.2) (-1.2,\x)--(-0.2,\x)  (8.2,\x)--(9.2,\x); }
	\end{tikzpicture}
	\caption{The infinite two--dimensional square grid $\Q$.}
	\label{fig:Q}
\end{figure}
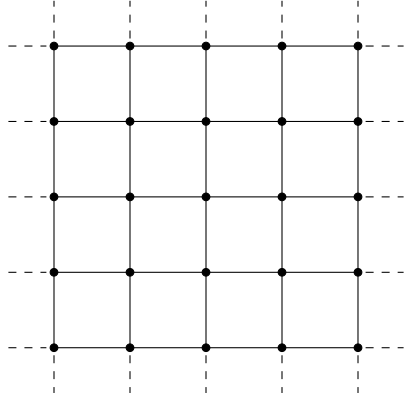

Let us now turn to the two--dimensional square grid $\Q$ with edges of unitary length (Figure \ref{fig:Q}). This is, in our opinion, the most interesting case discussed in this paper, since new phenomena occur, revealing  a strong dependence of the ground state problem not only on the dislocation of point defects, but also on their total number, in contrast to the other graphs considered so far. 

We begin by assuming that $V$ is finite. In this case, we have the next general result.

\begin{theorem}
\label{thm:Vfin1}
Let $\Q$ be the two--dimensional square grid. Then, for every nonempty $V\subset\V_\Q$ with $\#V<+\infty$ and every $q\in(2,4)$, there exists a critical mass $\mu_q^*>0$, depending on $V$ and $q$, such that
\begin{equation*}
\EE_{q,V}(\mu,\Q)\begin{cases}
=0 & \text{if }\mu\leq\mu_q^*\\
<0 & \text{if }\mu>\mu_q^*\,.
\end{cases}
\end{equation*} 
Moreover, 
\begin{itemize}
\item[(i)] if $\mu<\mu_q^*$,  ground states of $E_{q,V}$ in $H_\mu^1(\Q)$ do not exist;
\item[(ii)] if $\mu>\mu_q^*$,  ground states of $E_{q,V}$ in $H_\mu^1(\Q)$ exist.
\end{itemize}
Furthemore,
\[
\lim_{q\to2^+}\mu_q^*=0\,.
\]
\end{theorem}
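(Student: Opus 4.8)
The plan is to reduce the whole dichotomy to a single logarithmic Sobolev-type inequality on $\Q$, combined with the elementary amplitude rescaling $u\mapsto\sqrt\theta\,u$, which plays here the role that spatial dilations play on continuous domains (and which is unavailable on $\Q$, whose edges have fixed length). The key ingredient I would isolate (or quote from the preliminaries) is a constant $C>0$ with
\[
\|u\|_{L^\infty(\Q)}^2 \le C\,\|u'\|_{L^2(\Q)}^2\,\log\!\left(e+\frac{\|u\|_{L^2(\Q)}^2}{\|u'\|_{L^2(\Q)}^2}\right)\qquad\text{for every non-constant }u\in H^1(\Q).
\]
This is exactly where $\Q$ departs from the graphs of Theorems \ref{thm:n-cpt-hl}--\ref{thm:Z-per}: on those one has the stronger $\|u\|_{L^\infty}^2\le C\|u'\|_{L^2}\|u\|_{L^2}$ with no logarithm, which forces $\EE_{q,V}<0$ for every mass, whereas the logarithmic loss on $\Q$ is precisely what produces a strictly positive threshold. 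I would then set $\mu_q^*:=\inf\{\mu>0:\EE_{q,V}(\mu,\Q)<0\}$ and prove $\mu_q^*>0$ by inserting the inequality into $E_{q,V}$: with $K=\|u'\|_{L^2(\Q)}^2$ one gets $\sum_{\vv\in V}|u(\vv)|^q\le\#V\,(CK\log(e+\mu/K))^{q/2}$, so $E_{q,V}(u,\Q)\ge\tfrac12K-\tfrac{\#V}{q}(CK\log(e+\mu/K))^{q/2}$; nonnegativity of this lower bound for all $K>0$ reduces to $\sup_{s>0}s^{(q-2)/2}[\log(e+1/s)]^{q/2}\le\mathrm{const}/\mu^{(q-2)/2}$, and since the supremum is a finite constant $A_q$, this holds for all $\mu$ below an explicit positive value, giving $\EE_{q,V}\ge0$ there and $\mu_q^*>0$.

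Next I would exploit amplitude rescaling. As the only constant in $L^2(\Q)$ is $0$, every $u\in H^1_\mu(\Q)$ with $\mu>0$ is non-constant, so $a:=\tfrac12\|u'\|_{L^2(\Q)}^2>0$; writing $b:=\tfrac1q\sum_{\vv\in V}|u(\vv)|^q$, the competitor $\sqrt\theta\,u$ has mass $\theta\mu$ and energy $\theta a-\theta^{q/2}b$, and because $q/2>1$ this is a strictly decreasing perturbation whenever $b\ge a$. This one computation yields all the qualitative claims: $\EE_{q,V}(\cdot,\Q)$ is non-increasing (also because surplus mass can be placed far away at vanishing energy cost, which moreover gives $\EE_{q,V}\le0$ everywhere, hence $\EE_{q,V}=0$ below $\mu_q^*$); once negative it stays strictly negative for all larger masses, in fact $\EE_{q,V}(\mu',\Q)\le\tfrac{\mu'}{\mu}\EE_{q,V}(\mu,\Q)$; and $\EE_{q,V}(\mu_q^*,\Q)=0$, since $\EE_{q,V}(\mu_q^*,\Q)<0$ would, by rescaling down ($\theta<1$) and continuity in $\theta$, produce a mass just below $\mu_q^*$ with negative level, contradicting the definition of $\mu_q^*$. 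The same trick settles the nonexistence statement (i): a ground state at some $\mu<\mu_q^*$ would have $a=b>0$, so $\sqrt\theta\,u$ gives $\EE_{q,V}(\theta\mu,\Q)\le a(\theta-\theta^{q/2})<0$ for every $\theta>1$, forcing $\mu_q^*\le\mu$, a contradiction.

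To prove $\mu_q^*<\infty$ and to compute the limit as $q\to2^+$ I would build explicit test functions on $\Q$. Fixing a defect $\vv_0\in V$, take $u$ depending only on the graph distance from $\vv_0$, equal to $h$ at $\vv_0$ and decaying logarithmically to $0$ at distance $R$; since the distance-$n$ sphere is crossed by $\asymp n$ edges, this profile is (asymptotically) the one minimizing Dirichlet energy, with $\|u'\|_{L^2(\Q)}^2\asymp h^2/\log R$ and $\|u\|_{L^2(\Q)}^2\asymp h^2R^2$. Imposing $h^2R^2\asymp\mu$ and optimizing the energy $\tfrac{C}{2}\,\mu/(R^2\log R)-\tfrac1q\,\mu^{q/2}/R^q$ over $R$ (the maximum of $R^{-(q-2)}\log R$ being $\tfrac{1}{e(q-2)}$, attained at $R=e^{1/(q-2)}$) shows the energy is negative as soon as $\mu^{(q-2)/2}>c\,q(q-2)$; this gives both $\mu_q^*<\infty$ and the bound $\mu_q^*\le(c\,q(q-2))^{2/(q-2)}$, whose right-hand side tends to $0$ as $q\to2^+$ because the base vanishes linearly while the exponent diverges, whence $\lim_{q\to2^+}\mu_q^*=0$.

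Finally, for existence (ii) I would run the direct method at mass $\mu>\mu_q^*$, where $\EE_{q,V}(\mu,\Q)<0$. The same logarithmic inequality makes $E_{q,V}$ coercive on $H^1_\mu(\Q)$ (cf.\ Lemma \ref{lem:level}), so a minimizing sequence $u_n$ is bounded in $H^1(\Q)$; since a neighbourhood of each of the finitely many defects is compact, $u_n(\vv)\to u(\vv)$ for the weak limit $u$ and every $\vv\in V$. Strict negativity rules out vanishing: if $u\equiv0$ then $\sum_{\vv}|u_n(\vv)|^q\to0$ and $\liminf E_{q,V}(u_n,\Q)\ge0>\EE_{q,V}(\mu,\Q)$, a contradiction. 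A Brezis--Lieb splitting, in which the pointwise term passes to the limit because $(u_n-u)(\vv)\to0$, gives $\EE_{q,V}(\mu,\Q)\ge E_{q,V}(u,\Q)\ge\EE_{q,V}(\|u\|_{L^2(\Q)}^2,\Q)$; were $\|u\|_{L^2(\Q)}^2<\mu$, the strict monotonicity on the negativity region established above would be violated, so no mass escapes, $u_n\to u$ strongly in $H^1(\Q)$, and $u$ is a ground state. I expect the genuine obstacle to be the logarithmic Sobolev inequality on $\Q$: pinning down its precise form, in particular the exact logarithmic gain coming from the two-dimensional (linear) growth of the distance spheres, is what fixes the exponent $(q-2)/2$ and hence the entire threshold; by comparison, the amplitude-rescaling and concentration-compactness steps are robust and essentially insensitive to the geometry.
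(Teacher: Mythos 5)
Your overall architecture (define $\mu_q^*$ as the infimum of masses with negative level, use the rescaling $u\mapsto\sqrt\theta\,u$ to get the dichotomy and nonexistence below threshold, run the direct method above threshold, and send $q\to2^+$ with logarithmic test profiles) coincides with the paper's, which implements it through Lemmas \ref{lem:mu-star}, \ref{lem:compact-crit} and the explicit functions $f_n$ in Section \ref{sec:QVfin}; those parts of your proposal are essentially correct, up to a harmless slip in the mass asymptotics of the harmonic profile ($\|u\|_{L^2(\Q)}^2\asymp h^2R^2/\log^2R$ rather than $h^2R^2$, which does not change the conclusion $\mu_q^*\le(c(q-2))^{2/(q-2)}\to0$). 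The problem is the core step, $\mu_q^*>0$, where everything hinges on the inequality
\begin{equation*}
\|u\|_{L^\infty(\Q)}^2 \le C\,\|u'\|_{L^2(\Q)}^2\,\log\Bigl(e+\tfrac{\|u\|_{L^2(\Q)}^2}{\|u'\|_{L^2(\Q)}^2}\Bigr).
\end{equation*}
This is not in the paper's preliminaries (only \eqref{GN-inf} and \eqref{GN-2d} are), and you do not prove it. It is plausibly true and sharp on the radial logarithmic profile, but any proof I can see goes through an isocapacitary estimate $\mathrm{cap}(\{\vv\},\Omega)\gtrsim1/\log|\Omega|$ on $\Q$ (hence an isoperimetric-type inequality on the grid, precisely the kind of tool the authors warn is delicate there), or through \eqref{GN-2d} with the growth of $M_p$ in $p$ tracked, which \cite[Theorem 2.3]{ADST} does not provide. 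In other words, the entire difficulty of the theorem has been relocated into an unproved lemma. The paper avoids this by a different device: the spherical-mean rearrangement (Lemma \ref{lem:rad1}) reduces the problem to a weighted one-dimensional functional $\widetilde E_{q,\alpha}$ on $\R^+$ with weight $g$, and $\mu_q^*>0$ is then extracted from the Euler--Lagrange equation of the reduced minimizer via a Pohozaev-type identity (Proposition \ref{prop:Dq-to-0}); no $L^\infty$ refinement is needed.

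There is also a concrete error in your deduction even granting the inequality. Setting $s=K/\mu$ with $K=\|u'\|_{L^2(\Q)}^2$, you need $\sup_{s>0}s^{(q-2)/2}\bigl[\log(e+1/s)\bigr]^{q/2}<+\infty$, but as $s\to+\infty$ the logarithm tends to $1$ while $s^{(q-2)/2}\to+\infty$, so the supremum is infinite and your lower bound on $E_{q,V}$ is not nonnegative for all $K$. This is repairable: for $K\ge\mu$ the ordinary estimate \eqref{GN-inf} gives $\sum_{\vv\in V}|u(\vv)|^q\le \#V\,C_\infty^{q/2}\mu^{q/4}K^{q/4}$, which is dominated by $\tfrac12K$ for small $\mu$ because $q<4$ and $K^{q/4-1}\le\mu^{q/4-1}$, while for $K\le\mu$ your supremum restricted to $s\in(0,1]$ is indeed finite and the argument closes. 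You should make this two-regime splitting explicit. With that repair, and with an actual proof of the logarithmic inequality, your route would give an alternative (and arguably more quantitative) proof of $\mu_q^*>0$; as written, both the key inequality and the step that uses it have gaps.
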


This theorem describes a general, abstract feature of the energy $E_{q,V}$ on the grid: when the number of point defects is finite, independently of any other properties of $V$, the ground state level undergoes a sharp transition from 0 to strictly negative values as soon as the mass crosses a threshold $\mu_q^*$. This is sensibly different with respect to all graphs discussed before and suggests that  in presence of $\Z^2$-periodicity, finitely many nonlinear point defects are not strong enough to trap ground states at small masses. It is also worth noticing that this is a purely nonlinear effect, since it is easy to prove that ground states with linear point defects (i.e. the energy above with $q=2$) always exist on $\Q$ (see Lemma \ref{lem:bot-spec<0}). This is further confirmed by the asymptotic behaviour of the critical mass $\mu_q^*$ as $q$ approaches $2$ obtained in Theorem \ref{thm:Vfin1}. 

In the special case of $V$ containing a single vertex, it is also possible to derive qualitative properties of ground states.
\begin{proposition}
\label{prop:rad}
Let $\Q$ be the two--dimensional square grid and $V=\{\overline\vv\}$, for some given $\overline{\vv}\in\V_\Q$. Let $u\in H_\mu^1(\Q)$ be a ground state of $E_{q,V}$ (which exists for every $q\in (2,4)$ if $\mu > \mu_q^*$ by the preceding result). Then $u$ is radial and radially decreasing on $\Q$ with respect to $\overline{\vv}$.
\end{proposition}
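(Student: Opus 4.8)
The plan is to combine a maximum-principle analysis of the Euler--Lagrange equation \eqref{prob} with a symmetric decreasing rearrangement adapted to the radial structure of $\Q$ around $\overline\vv$. Existence of the ground state $u$ is guaranteed for the relevant masses by Theorem~\ref{thm:Vfin1}. First I would normalize: since $E_{q,V}(|u|,\Q)=E_{q,V}(u,\Q)$ (the map $u\mapsto|u|$ preserves both the mass and $\|u'\|_{L^2(\Q)}$, and leaves $|u(\overline\vv)|$ untouched), I may assume $u\ge0$. As $u$ solves $u''=\la u$ on every edge, with the single nonlinear condition at $\overline\vv$ and Kirchhoff conditions elsewhere, a preliminary step is to check that $\la>0$: otherwise \eqref{prob} would admit no nontrivial $L^2(\Q)$ solution decaying at infinity, contradicting $u\in H^1_\mu(\Q)$ and $\EE_{q,V}(\mu,\Q)<0$.

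With $\la>0$ and $u\ge0$, on each (bounded) edge $u$ is convex, hence attains its maximum at an endpoint. A short argument then shows that the global maximum cannot sit at a Kirchhoff vertex: if it did, convexity would force the outgoing derivative along every incident edge to be $\le0$, the balance $\sum_{e\succ\vv}u_e'(\vv)=0$ would force each of them to vanish, and the resulting Cauchy data $(u(\vv),u'(\vv))=(M,0)$ with $u''=\la u>0$ would make $u$ exceed $M$ at the opposite endpoint, a contradiction. Hence $\max_\Q u=u(\overline\vv)$, which identifies $\overline\vv$ as the center and, incidentally, shows $u(\overline\vv)>0$.

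The core of the proof is a rearrangement step. I would introduce the symmetric decreasing rearrangement $u^*$ of $u$ on $\Q$ centered at $\overline\vv$, i.e.\ the radial function (in the sense of Section~\ref{sec:prel}) equimeasurable with $u$ whose superlevel sets are sublevel sets of the distance to $\overline\vv$. The three properties to establish are: (a) $\|u^*\|_{L^2(\Q)}^2=\|u\|_{L^2(\Q)}^2=\mu$, so $u^*\in H^1_\mu(\Q)$; (b) $u^*(\overline\vv)=\max_\Q u^*=\max_\Q u=u(\overline\vv)$, so that the pointwise term $\tfrac1q|u(\overline\vv)|^q$ is preserved; and (c) a P\'olya--Szeg\H{o}-type inequality $\|(u^*)'\|_{L^2(\Q)}\le\|u'\|_{L^2(\Q)}$. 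Granting (a)--(c) one gets $E_{q,V}(u^*,\Q)\le E_{q,V}(u,\Q)=\EE_{q,V}(\mu,\Q)$, whence $u^*$ is itself a ground state and equality must hold in (c). I would then invoke the rigidity (equality case) of the rearrangement inequality to conclude that $u$ coincides with $u^*$, so that $u$ is radial and radially decreasing about $\overline\vv$.

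The main obstacle is precisely the rearrangement step (c) together with its rigidity, because the relevant level structure of $\Q$ is genuinely combinatorial: the shells at increasing distance from $\overline\vv$ carry a growing number of edges, and redistributing mass inward while controlling $\|u'\|_{L^2(\Q)}$ must be done compatibly with this growth. Establishing the P\'olya--Szeg\H{o} inequality on this structure, and --- harder --- characterizing the equality case so as to exclude nonradial ground states, is where the real work lies. The simultaneous requirement (b) that the value at the center be preserved (so that the defect term does not drop under symmetrization) is what makes the nonlinear $\delta$-interaction compatible with the rearrangement, and it ties the whole argument to the fact, established above, that the maximum of $u$ is attained exactly at $\overline\vv$.
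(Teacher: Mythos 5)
Your argument stands or falls on step (c), the P\'olya--Szeg\H{o} inequality for the symmetric decreasing rearrangement on $\Q$ centered at $\overline\vv$, together with its rigidity --- and you leave both unproven, deferring them as ``where the real work lies.'' This is a genuine gap, not a technicality: on the two--dimensional grid metric balls are \emph{not} isoperimetric sets, and the decreasing-rearrangement inequality $\|(u^*)'\|_{L^2}\le\|u'\|_{L^2}$ is exactly the kind of statement whose proof (via the co-area formula) requires the isoperimetric property of balls. So the route you propose is not merely incomplete; it is the route that fails on $\Q$, which is why no symmetry result is known for standard nonlinearities on grids. The paper circumvents this with a different device (Lemma \ref{lem:rad1}): replace $u$ by its \emph{spherical mean} $w$, i.e.\ on each edge of the shell $B_{n+1}\setminus B_n$ set $w$ equal to the average of $u$ over the $4(2n+1)$ edges of that shell. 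This needs only the convexity inequality $\bigl(\sum_i x_i\bigr)^2\le n\sum_i x_i^2$, no isoperimetry; it gives $\|w\|_{L^2}\le\|u\|_{L^2}$ and $\|w'\|_{L^2}\le\|u'\|_{L^2}$ with \emph{strict} inequalities whenever $u$ is not radial, while preserving $w(\overline\vv)=u(\overline\vv)$ exactly. The mass may drop, but since $E(u)\le 0$ (as $\mu>\mu_q^*$) the rescaling $v=\sqrt{\mu}\,w/\|w\|_{L^2}$ strictly decreases the energy again (using $q>2$), contradicting minimality. Equality discussion is then immediate, with none of the delicate level-set analysis that rigidity in P\'olya--Szeg\H{o} would demand.

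A second, smaller issue: even granting radiality, the spherical mean (like your rearrangement, if it worked) does not by itself yield that the radial profile is \emph{decreasing}; your maximum-principle step only locates the global maximum at $\overline\vv$ and does not exclude that $\widetilde u'$ vanishes and changes sign at some finite radius. The paper handles monotonicity separately: the radial profile $\widetilde u$ satisfies $\widetilde u''=\lambda\widetilde u$ on each interval $(n,n+1)$ with the matching condition $\widetilde u_-'(n)=3\widetilde u_+'(n)$ coming from Kirchhoff at the shell vertices, and one shows that if $\widetilde u'$ ever reached $0$ at a finite $T$ then $\widetilde u''=\lambda\widetilde u>0$ would force $\widetilde u'>0$ on $(T,+\infty)$, contradicting $u\in H^1(\Q)$. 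Your $\lambda>0$ and convexity observations are correct and in the same spirit, but you would need to add this ODE argument to conclude the decreasing property.
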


We point out that the radiality of ground states is by no means trivial. Usually, this property follows from rearrangement techniques that provide P\'olya--Szeg\H{o} type inequalities. In general, the validity of such inequalities (e.g. for functions on $\R^N$) depends in a crucial way on the isoperimetric properties of balls. Unfortunately, it turns out  that, on grids, metric balls are {\em not} isoperimetric sets. Therefore, all rearrangement inequalities based on this fundamental property simply do not hold. This is the reason why there is  no symmetry result for problems with standard nonlinearities on grids. In the case of a single point defect, however, we prove that the use of classical rearrangement inequalities can be replaced by a new type of argument, essentially based on spherical means, that allows one to obtain the result of Proposition \ref{prop:rad}.

Next we consider infinite sets $V\subset\V_\Q$. As already pointed out when dealing with $\Z$-periodic graphs, in this case it is reasonable to expect that some structural properties of $V$ are needed to obtain  ground states. To this end, we introduce the next definitions. 

\begin{definition}
\label{def:I-Z-per}
A subset $V\subset\V_\Q$ is called $\Z$-periodic if there exists a vector $\vec{v}\in \R^{2}\setminus\{0\}$ such that
\begin{itemize}
\item[$(i)$] $V=V+k\vec{v}$, for every $k\in \Z$, and
\item[$(ii)$] there exist $P_0\in \R^2$ and $r>0$ such that $|(\vv-P_0)\cdot \vec{v}^\perp|\leq r$ for every $\vv\in V$.
\end{itemize}
\end{definition}

\begin{definition}
	\label{def:I-Z2-per}
	A subset $V\subset\V_{\Q}$ is called $\Z^{2}$-periodic if there exist two linearly independent vectors $\vec{v}_1, \vec{v}_2\in \R^{2}\setminus\{0\}$ such that
	\begin{equation*}
	V=V+k_{1}\vec{v}_1+k_{2}\vec{v}_2\qquad\forall\,k_{1},k_{2}\in \Z\,.
	\end{equation*}
\end{definition}
Observe that, by definition, a $\Z$-periodic set $V$ in $\Q$ contains infinitely many vertices, but they are all contained in a strip bounded in one direction.

The next result  provides a complete characterization of the ground state problem in presence of point defects on a $\Z$-periodic set, unravelling the appearance of a rather unexpected threshold not only on the mass $\mu$, but also on the nonlinear power $q$.
\begin{theorem}
\label{thm:grid-Z-per}
Let $\Q$ be the two--dimensional square grid and $V\subset \V_\Q$ be $\Z$-periodic. Then
\begin{itemize}
\item[(i)] if $q\in(2,3)$, there results $\EE_{q,V}(\mu,\Q)<0$ and ground states of $E_{q,V}$ in $H_\mu^1(\Q)$ exist for every $\mu>0$;
\item[(ii)] if $q\in[3,4)$, there exists a critical mass $\mu_q^*>0$, depending on $q$ and $V$, such that 
\[
\EE_{q,V}(\mu,\Q)\begin{cases}
=0 & \text{if }\mu\leq\mu_q^*\\
<0 & \text{if }\mu>\mu_q^*
\end{cases}
\]
and 
\begin{itemize}
\item[(ii.1)] if $\mu<\mu_q^*$,  ground states of $E_{q,V}$ in $H_\mu^1(\Q)$ do not exist;
\item[(ii.2)] if $\mu>\mu_q^*$,  ground states of $E_{q,V}$ in $H_\mu^1(\Q)$ exist.
\end{itemize}
\end{itemize}
\end{theorem}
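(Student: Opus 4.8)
\emph{Plan of proof.} The backbone of the argument is a sharp Gagliardo--Nirenberg-type inequality for point defects sitting on a $\Z$-periodic set, which I would isolate as the key lemma: for every $\Z$-periodic $V\subset\V_\Q$ and every $q\in[3,4)$ there exists $C=C(q,V)>0$ such that
\begin{equation}
\sum_{\vv\in V}|u(\vv)|^q\le C\,\|u'\|_{L^2(\Q)}^2\,\|u\|_{L^2(\Q)}^{q-2}\qquad\forall\,u\in H^1(\Q),
\label{eq:keyGN}
\end{equation}
and, crucially, that \eqref{eq:keyGN} \emph{fails} when $q\in(2,3)$. The exponent $3$ is dictated by a scaling heuristic that also guides the whole argument: spreading a profile of mass $\mu$ over an $R\times R$ portion of the grid centred on $V$ gives $\|u'\|_{L^2(\Q)}^2\sim\mu R^{-2}$ (two-dimensional volume growth), while $\sum_{\vv\in V}|u(\vv)|^q\sim\mu^{q/2}R^{1-q}$, since the defects lie on a one-dimensional line and only $\sim R$ of them are charged. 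The kinetic term dominates the nonlinear one along this family precisely when $1-q<-2$, i.e. $q>3$, the two balancing at $q=3$; this is exactly the borderline at which \eqref{eq:keyGN} becomes admissible, with the $R\to\infty$ profiles saturating it.

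Granting \eqref{eq:keyGN}, the two regimes follow by fairly standard reasoning. For item (i), $q\in(2,3)$, I would use the spreading competitors: since $R^{1-q}\gg R^{-2}$, the energy $E_{q,V}(u,\Q)\sim\frac12\mu R^{-2}-\frac1q\mu^{q/2}R^{1-q}$ is negative for $R$ large, whence $\EE_{q,V}(\mu,\Q)<0$ for every $\mu>0$. For item (ii), $q\in[3,4)$, inequality \eqref{eq:keyGN} yields
\[
E_{q,V}(u,\Q)\ge \|u'\|_{L^2(\Q)}^2\Big(\tfrac12-\tfrac Cq\,\mu^{(q-2)/2}\Big),
\]
so $E_{q,V}\ge0$ on $H^1_\mu(\Q)$ as soon as $\mu\le\mu_q^*:=(q/2C)^{2/(q-2)}$, giving $\EE_{q,V}\ge0$; the matching bound $\EE_{q,V}\le0$ comes again from spreading profiles (for which $E\to0^+$ as $R\to\infty$), so $\EE_{q,V}(\mu,\Q)=0$ below threshold. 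Concentrating at scale $R\sim1$ gives $E\sim\frac12\mu-\frac1q\mu^{q/2}<0$ for $\mu$ large, so $\EE_{q,V}<0$ eventually; the rescaling $u\mapsto\sqrt{\mu'/\mu}\,u$ shows that $\{\mu:\EE_{q,V}(\mu,\Q)<0\}$ is a half-line, which pins down $\mu_q^*\in(0,\infty)$ and gives the stated dichotomy.

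The existence statements (all of (i) and (ii.2)) require recovering compactness on the noncompact grid with defects escaping along $V$. Here I would run a concentration--compactness scheme: a minimizing sequence is bounded in $H^1(\Q)$; vanishing is excluded by the strict negativity $\EE_{q,V}<0$ (if all vertex values vanished in the limit the energy could not remain negative), and dichotomy by the strict subadditivity $\EE_{q,V}(\mu,\Q)<\EE_{q,V}(\mu_1,\Q)+\EE_{q,V}(\mu-\mu_1,\Q)$, itself a consequence of the superhomogeneity visible in the rescaling above. The $\Z$-periodicity of $V$ is what makes this work: translating by multiples of the period $\vec v$ leaves both the constraint and $E_{q,V}$ invariant, so one may recentre the sequence and extract a nontrivial weak limit which, by the no-loss analysis, is a genuine minimizer. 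Nonexistence below threshold (ii.1) is then immediate: for $\mu<\mu_q^*$ the coefficient $\frac12-\frac Cq\mu^{(q-2)/2}$ is strictly positive, so $E_{q,V}(u,\Q)=0$ would force $\|u'\|_{L^2(\Q)}=0$, i.e.\ $u$ constant, which is incompatible with $u\in H^1_\mu(\Q)$ on the infinite grid.

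The genuine difficulty, and the step I expect to absorb most of the work, is \eqref{eq:keyGN} itself. The naive estimates are provably too weak: applying the one-dimensional bound $|u(\vv)|^2\le\|u\|_{L^2(P)}\|u'\|_{L^2(P)}$ along the vertical paths $P$ through the defects and summing (the columns through $V$ being pairwise disjoint, so their contributions telescope into $\|u\|_{L^2(\Q)}\|u'\|_{L^2(\Q)}$) only delivers $\sum_{\vv\in V}|u(\vv)|^q\le C\|u'\|_{L^2(\Q)}^{q/2}\|u\|_{L^2(\Q)}^{q/2}$, whose kinetic exponent $q/2<2$ renders the energy lower bound negative and yields no threshold at all. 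Bridging the gap between the exponent $q/2$ and the required exponent $2$ is exactly where the two-dimensionality of $\Q$ must enter; this is the same obstruction that, for a single vertex, produces the critical-mass phenomenon of Theorem \ref{thm:Vfin1}, so the strategy would be to upgrade that finite-$V$ estimate to the $\Z$-periodic sum, using that $V$ lies in a strip of bounded width to keep the summation over the infinitely many collinear defects under control.
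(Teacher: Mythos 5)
Your overall architecture --- the scaling heuristic identifying $q=3$ as the threshold, the spreading competitors for $q\in(2,3)$ (essentially the paper's $\varphi_\varepsilon=k_\varepsilon e^{-\varepsilon(|x|+|y|)}$ restricted to $\Q$), the half-line structure of $\{\mu:\EE_{q,V}(\mu,\Q)<0\}$, and the translation-based compactness --- matches the paper's strategy. But there is a genuine gap at exactly the point you flag as the main difficulty: the inequality $\sum_{\vv\in V}|u(\vv)|^q\le C\,\|u'\|_{L^2(\Q)}^2\|u\|_{L^2(\Q)}^{q-2}$ is asserted, not proved, and all of part (ii) rests on it. You correctly observe that summing the one-dimensional trace estimate over the columns through $V$ only yields kinetic exponent $q/2<2$, and that two-dimensionality must be injected somewhere, but you do not supply the mechanism. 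The paper's mechanism (for $q=3$ only, which suffices by Lemma \ref{lem:q<q1} and Lemma \ref{lem:q-star}) is Lemma \ref{lem:ineq-G'}: attach to each $\vv\in V$ a horizontal edge $e_\vv$, set $\G'=\bigcup_{\vv\in V}e_\vv$, and prove $\|u\|_{L^3(\G')}^3\le K\|u\|_{L^2(\Q)}\|u'\|_{L^2(\Q)}^2$ by integrating $|(|u|^3)'|$ along entire horizontal lines (using that the strip containing the $\Z$-periodic set $V$ meets each line in boundedly many vertices of $\G'$), applying H\"older, and then invoking the genuinely two-dimensional Gagliardo--Nirenberg inequality \eqref{GN-2d} with $p=4$; the vertex sum is finally compared to $\|u\|_{L^3(\G')}^3$ with an error again controlled by $\|u\|_{L^4(\Q)}^2\|u'\|_{L^2(\Q)}\lesssim\sqrt{\mu}\,\|u'\|_{L^2(\Q)}^2$. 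Without this (or an equivalent) argument your proof of $\mu_q^*>0$ is incomplete.

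A secondary imprecision concerns (ii.1): you deduce nonexistence from strict positivity of the coefficient $\frac12-\frac Cq\mu^{(q-2)/2}$, but that coefficient is positive only below the \emph{explicit} Gagliardo--Nirenberg threshold $(q/2C)^{2/(q-2)}$, which is in general strictly smaller than the critical mass $\mu_q^*$ defined as the endpoint of the half-line $\{\mu:\EE_{q,V}(\mu,\Q)<0\}$. For masses between the two values one still has $\EE_{q,V}(\mu,\Q)=0$ while your coefficient may be negative, so the argument does not exclude a minimizer there. The paper closes this by Lemma \ref{lem:mu-star}(a): if some $u\in H^1_\mu(\Q)$ with $\mu<\mu_q^*$ attained $E_{q,V}(u,\Q)=0$, then $u\not\equiv0$ on $V$ (otherwise $\|u'\|_{L^2(\Q)}=0$ and $u\equiv0$), and rescaling $u$ to a mass $\mu_1\in(\mu,\mu_q^*)$ would give $\EE_{q,V}(\mu_1,\Q)<0$, contradicting the definition of $\mu_q^*$.
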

The situation is simpler for $\Z^2$-periodic sets of point defects.

\begin{theorem}
\label{thm:grid-Z2-per}
Let $\Q$ be the two--dimensional square grid and $V\subseteq \V_\Q$ be $\Z^2$-periodic. Then, for every $q\in(2,4)$ and every $\mu>0$, $\EE_{q,V}(\mu,\Q)<0$ and ground states of $E_{q,V}$ in $H_\mu^1(\Q)$ exist. 
\end{theorem}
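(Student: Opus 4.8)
The plan is to establish the two assertions separately: first the strict negativity $\EE_{q,V}(\mu,\Q)<0$ for all $\mu>0$ and $q\in(2,4)$, which is the ``easy'' part where $\Z^2$-periodicity is decisive, and then the existence of minimizers by a concentration--compactness argument that crucially exploits the translation invariance coming from Definition \ref{def:I-Z2-per}.

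First, to produce negative energy at every mass I would use spread-out test functions. Fix a smooth bump $\phi:\R^2\to[0,1]$ with compact support and, for large $L$, define $u_L\in H^1_\mu(\Q)$ by setting $u_L(\vv)=c\,\phi(\vv/L)$ at the vertices and interpolating affinely along each edge, with $c>0$ chosen so that $\|u_L\|_{L^2(\Q)}^2=\mu$. Since all edges have unit length, Riemann-sum comparisons give $\|u_L\|_{L^2(\Q)}^2\approx c^2L^2\int|\phi|^2$, hence $c^2\approx\mu/(L^2\int|\phi|^2)$, while $\|u_L'\|_{L^2(\Q)}^2\approx c^2\int|\nabla\phi|^2\approx C_1\mu L^{-2}$. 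The key point is that Definition \ref{def:I-Z2-per} forces $V$ to have a \emph{positive two-dimensional density} $\rho>0$, so that $\sum_{\vv\in V}|u_L(\vv)|^q\approx c^q\rho L^2\int|\phi|^q\approx C_2\rho\,\mu^{q/2}L^{2-q}$. Thus $E_{q,V}(u_L,\Q)\approx\tfrac{C_1}{2}\mu L^{-2}-\tfrac{C_2}{q}\rho\,\mu^{q/2}L^{2-q}$, and since $q<4$ we have $2-q>-2$, so the negative term dominates as $L\to+\infty$, giving $\EE_{q,V}(\mu,\Q)<0$ for every $\mu>0$ and every $q\in(2,4)$. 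This is precisely where double periodicity outperforms the earlier cases: the density exponents $0$ (finite $V$) and $1$ ($\Z$-periodic $V$) would instead yield $L^{-q}$ and $L^{1-q}$ and thus confine negativity to large masses or to $q<3$, explaining the thresholds in Theorems \ref{thm:Vfin1} and \ref{thm:grid-Z-per}.

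Next, I would record the scaling inequality $\EE_{q,V}(\beta\mu,\Q)<\beta\,\EE_{q,V}(\mu,\Q)$ for every $\beta>1$. Testing the level at mass $\beta\mu$ with $\sqrt\beta\,u$, where $u$ is almost optimal at mass $\mu$, one computes $E_{q,V}(\sqrt\beta u,\Q)-\beta E_{q,V}(u,\Q)=-\tfrac{\beta^{q/2}-\beta}{q}\sum_{\vv\in V}|u(\vv)|^q$, which is strictly negative because $q>2$ gives $\beta^{q/2}>\beta$ and the nonlinear term is bounded below away from $0$ for near-minimizers (since $\EE_{q,V}(\mu,\Q)<0$ forces $\tfrac1q\sum_{\vv\in V}|u(\vv)|^q\ge|\EE_{q,V}(\mu,\Q)|-\eps$). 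Equivalently $\mu\mapsto\EE_{q,V}(\mu,\Q)/\mu$ is strictly decreasing, which immediately yields the strict subadditivity $\EE_{q,V}(\mu,\Q)<\EE_{q,V}(\alpha,\Q)+\EE_{q,V}(\mu-\alpha,\Q)$ for all $0<\alpha<\mu$.

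Finally, for existence I would take a minimizing sequence $(u_n)\subset H^1_\mu(\Q)$, which is bounded in $H^1(\Q)$ by the subcritical estimate behind Lemma \ref{lem:level}, and run a Lions-type concentration--compactness trichotomy on $\Q$. Vanishing is excluded because it would force $\sum_{\vv\in V}|u_n(\vv)|^q\to0$, whence $\liminf E_{q,V}(u_n,\Q)\ge0>\EE_{q,V}(\mu,\Q)$; dichotomy is excluded by the strict subadditivity above. \textbf{The main obstacle} is the surviving compactness alternative, where mass could a priori escape to infinity: one obtains translations along which $(u_n)$ concentrates, and the argument closes \emph{only} because $V$ is $\Z^2$-periodic. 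Replacing each such translation by the nearest lattice vector $k_1\vec v_1+k_2\vec v_2$ leaves $E_{q,V}$ invariant (the bounded remainder being harmless, since bounded regions of $\Q$ embed compactly into $C^0$ thanks to $\inf_e|e|>0$), so the re-centered sequence converges weakly to some $u\neq0$. Weak lower semicontinuity together with the exclusion of dichotomy and vanishing then gives $\|u\|_{L^2(\Q)}^2=\mu$, strong convergence, and $E_{q,V}(u,\Q)=\EE_{q,V}(\mu,\Q)$, so $u$ is a ground state. It is exactly this lattice recentering that is unavailable for the unstructured infinite sets of Theorem \ref{thm:Z-per}(iii), underscoring why $\Z^2$-periodicity is both sufficient and, in spirit, necessary here.
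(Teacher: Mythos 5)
Your proposal is correct and follows essentially the same strategy as the paper: negativity of $\EE_{q,V}(\mu,\Q)$ via wide, mass-$\mu$ test functions whose kinetic energy scales like $L^{-2}$ while the point-defect sum scales like $L^{2-q}$ thanks to the positive two-dimensional density of a $\Z^2$-periodic $V$ (the paper uses exponentials $k_\eps e^{-\eps(|x|+|y|)}$ with $\eps\sim 1/L$, but the exponent bookkeeping is identical), followed by existence via a concentration--compactness argument in which vanishing is ruled out by $\EE_{q,V}(\mu,\Q)<0$, dichotomy by the strict sub-homogeneity of $\mu\mapsto\EE_{q,V}(\mu,\Q)/\mu$, and escape to infinity by recentering along the period lattice (the paper's Lemma \ref{lem:compact-crit-grid-inf}). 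No gaps.
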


Comparing Theorem \ref{thm:Z-per} with Theorems \ref{thm:Vfin1}--\ref{thm:grid-Z-per}--\ref{thm:grid-Z2-per} plainly shows that, on periodic graphs, the ground state problem we are considering is strongly sensitive to the degree of periodicity of the structure. When the degree of periodicity is the least possible (i.e. $\G$ is $\Z$-periodic), the behaviour of $\EE_{q,V}$ is rather trivial and even a single point defect is enough to make it  strictly negative. As soon as the degree of periodicity increases, on the contrary, the total number of point defects starts playing a nontrivial role. Looking at Theorems \ref{thm:Vfin1}--\ref{thm:grid-Z-per}--\ref{thm:grid-Z2-per} on the two--dimensional square grid, one sees that the main difference arises in the behaviour of $\EE_{q,V}$ at small masses. When the number of points defects is finite, $\EE_{q,V}$ is equal to 0 as soon as the mass is small enough (and ground states do not exist). If the number of point defects increases this phenomenon ceases to rule the problem, but infinitely many defects are not always enough to make $\EE_{q,V}$ strictly negative for every value of $q$ and $\mu$. On the contrary, Theorem \ref{thm:grid-Z-per} suggests that whenever the point defects are infinitely many, but constrained inside a strip which is bounded in one direction, the negativity of $\EE_{q,V}$ at small masses is recovered only for sufficiently small nonlinearity powers $q$. Strictly speaking, Theorem \ref{thm:grid-Z-per} proves this fact only for $\Z$-periodic sets $V$, but it is easy to check that the argument of the proof can be generalized to obtain the same behaviour of $\EE_{q,V}$ on sets $V$ with infinitely many vertices all contained in a strip of $\R^2$ bounded in one direction (with a threshold on $q$ possibly different than $3$). If one further expands $V$, taking e.g. $\Z^2$-periodic sets, Theorem \ref{thm:grid-Z2-per} shows that the strength of point defects is then sufficiently large to ensure the negativity of $\EE_{q,V}$ for every $q$ and $\mu$.

A heuristic explanation for this phenomenology is the following. When the mass $\mu$ is small, if the energy $E_{q,V}(u,\Q)$ of a function $u\in H_\mu^1(\Q)$ is low, it is easy to prove that $u$ is uniformly small on $\Q$. In particular, each term of the sum $\sum_{\vv\in V}|u(\vv)|^q$ is small, and so is the sum if we have few vertices $\vv$ in $V$ or large powers $q$. On the contrary, since $u$ is widespread on $\Q$, the structure of the grid forces $u$ to run through a large number of vertices, and this contributes to enlarge its kinetic energy $\|u'\|_{L^2(\Q)}^2$. Since the sign of $E_{q,V}(u,\Q)$ is the result of the competition between the kinetic energy and the total contribution of point defects, this provides a rough intuition of why finitely many defects are not enough to make $\sum_{\vv\in V}|u(\vv)|^q$ overcome the kinetic energy at small masses, as well as why this starts to occur first at small values of $q$ when $V$ contains infinitely many vertices confined in some strip of $\R^2$. 

We observe that, even though Theorems \ref{thm:Vfin1}--\ref{thm:grid-Z-per}--\ref{thm:grid-Z2-per} have been proved here only for the two--dimensional square grid $\Q$, we believe that they unravel the main features of the problem on general $\Z^2$--periodic graphs. We decided to work with the square grid because computations and proofs are particularly transparent, but we are confident that all the arguments developed here can be generalized with small effort to cover any given $\Z^2$--periodic graph. 

We point out that a nontrivial dependence of the ground state problem on the degree of periodicity of graphs has been observed also for the model with diffuse, standard nonlinearities only (see the series of works \cite{AD,ADST,D-per}). In that context, a comprehensive understanding of the problem is by now available, based on the relation between the degree of periodicity of the graph and the validity of certain Sobolev and Gagliardo--Nirenberg inequalities peculiar of higher dimensional spaces. Conversely, the general portrait for point defects on periodic graphs is far from being understood. The results of the present paper shall thus be seen as a first step in this direction and a starting point for future investigations of the problem on general $\Z^n$-periodic graphs, with $n\geq2$.

To conclude this introduction, we remark that the periodicity assumptions on  $V$ in Theorems \ref{thm:grid-Z-per}--\ref{thm:grid-Z2-per} are somewhat natural to guarantee enough compactness in the search for ground states. Analogously to what happens on $\Z$-periodic graphs, when these assumptions are dropped it is not difficult to exhibit sets of point defects on $\Q$ for which ground states never exist.
\begin{theorem}
	\label{thm:noexQ}
	Let $\Q$ be the two--dimensional square grid. There exist sets $V\subset\V_\Q$ that 
	\begin{itemize}
		\item[(i)] satisfy Definition \ref{def:I-Z-per}(ii) but do not satisfy Definition \ref{def:I-Z-per}(i); or
		\item[(ii)] satisfy Definition \ref{def:I-Z-per}(i) but do not satisfy neither Definition \ref{def:I-Z-per}(ii) nor Definition \ref{def:I-Z2-per},
	\end{itemize}
	and for which ground states of $E_{q,V}$ in $H_\mu^1(\Q)$ do not exist for any $q\in(2,4)$ and any $\mu>0.$
\end{theorem}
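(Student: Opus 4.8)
The plan is to exhibit, for each of the two cases, an explicit set $V$ and to compare its ground state problem with that of a genuinely larger \emph{periodic} defect set $V_\infty\supseteq V$, chosen so that $V$ agrees with $V_\infty$ on larger and larger regions at infinity and so that $V_\infty$ is known to admit a ground state. For case (i) I would take $V=\{(n,0): n\geq 0\}$, a half--line of defects on the horizontal axis: it lies in the strip $\{y=0\}$, so it satisfies Definition \ref{def:I-Z-per}(ii), but $V+(1,0)\subsetneq V$, so it violates Definition \ref{def:I-Z-per}(i). Its natural limit at $+\infty$ is the full line $V_\infty=\Z\times\{0\}$, which is $\Z$-periodic in the sense of Definition \ref{def:I-Z-per} and hence admits a ground state (or a precisely described level) by Theorem \ref{thm:grid-Z-per}. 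For case (ii) I would take $V=\Z\times S$ with $S=\bigcup_{m\geq1}\{N_m,N_m+1,\dots,N_m+m\}$ and $N_m$ increasing fast: this is invariant under horizontal integer shifts (Definition \ref{def:I-Z-per}(i) holds), is unbounded in the vertical direction (Definition \ref{def:I-Z-per}(ii) fails), and has aperiodic $S$ (Definition \ref{def:I-Z2-per} fails). Along the longer and longer vertical runs the set converges to the whole vertex set $V_\infty=\V_\Q$, which is $\Z^2$-periodic, so Theorem \ref{thm:grid-Z2-per} furnishes a ground state with $\EE_{q,\V_\Q}(\mu,\Q)<0$ for all $q$ and $\mu$.

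The heart of the argument, identical in both cases, is the identity $\EE_{q,V}(\mu,\Q)=\EE_{q,V_\infty}(\mu,\Q)$. The inequality $\EE_{q,V}(\mu,\Q)\geq\EE_{q,V_\infty}(\mu,\Q)$ is immediate, since $V\subseteq V_\infty$ gives $E_{q,V}(u,\Q)\geq E_{q,V_\infty}(u,\Q)$ pointwise in $H^1_\mu(\Q)$. For the reverse inequality I would fix a ground state $w$ for $V_\infty$ (or, when $\EE_{q,V_\infty}=0$, a near--minimizer): since $w$ solves \eqref{prob} with $\lambda<0$, it decays exponentially and is essentially supported in a finite region. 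Translating $w$ far out along the direction in which $V$ fills up to $V_\infty$ — to the right in case (i), into a long run in case (ii) — the translated function sees, on its essential support, only vertices where $V$ and $V_\infty$ coincide, so the defect sum converges to the full sum while the kinetic term is translation invariant. This yields $E_{q,V}(\,\cdot\,,\Q)\to\EE_{q,V_\infty}(\mu,\Q)$ and hence $\EE_{q,V}(\mu,\Q)\leq\EE_{q,V_\infty}(\mu,\Q)$.

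It then remains to preclude attainment. Suppose $u\in H^1_\mu(\Q)$ were a ground state for $V$, so $E_{q,V}(u,\Q)=\EE_{q,V}(\mu,\Q)=\EE_{q,V_\infty}(\mu,\Q)$. Replacing $u$ by $|u|$ and testing the Euler--Lagrange equation \eqref{prob} with $u$ gives the Nehari--type identity $\|u'\|_{L^2(\Q)}^2-\sum_{\vv\in V}|u(\vv)|^q=\lambda\mu$; combined with $E_{q,V}(u,\Q)\leq 0$ and $q>2$ this forces $\lambda<0$. Then $u\geq 0$ is concave on every edge, and a maximum--principle argument on the connected graph $\Q$ (at a vertex where $u$ vanishes, the Kirchhoff condition forces all outgoing derivatives to vanish, hence $u\equiv 0$ on the incident edges and, by propagation, everywhere) shows $u>0$ at every vertex. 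In particular $u>0$ at the vertices of the nonempty set $V_\infty\setminus V$, whence
\[
E_{q,V_\infty}(u,\Q)=E_{q,V}(u,\Q)-\frac1q\sum_{\vv\in V_\infty\setminus V}|u(\vv)|^q<E_{q,V}(u,\Q)=\EE_{q,V_\infty}(\mu,\Q),
\]
which contradicts the definition of $\EE_{q,V_\infty}(\mu,\Q)$ as an infimum over $H^1_\mu(\Q)$. This rules out ground states for every $q\in(2,4)$ and every $\mu>0$.

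I expect two delicate points. First, the upper bound requires a quantitative control of the defect sum under translation, estimating the exponentially small contribution of the vertices where $V$ and $V_\infty$ differ; this is routine given decay bounds on $w$, but needs care at the scale of the growing runs in case (ii). The main obstacle, however, is the rigidity step: the strict positivity of a putative ground state, i.e. the maximum principle on the quantum graph, especially in the borderline regime $\EE_{q,V}(\mu,\Q)=0$ (which occurs for $V_\infty=\Z\times\{0\}$ when $q\in[3,4)$ and $\mu\leq\mu_q^*$). There $u$ is not a minimizer of a strictly negative level, so the sign of $\lambda$ must be extracted directly from the Nehari identity and used to exclude any vanishing of $u$ at a vertex. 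It is precisely this step that upgrades the soft level identity $\EE_{q,V}=\EE_{q,V_\infty}$ into genuine nonexistence.
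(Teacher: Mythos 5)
Your proof is correct, and it takes a genuinely different route from the paper's. The paper's proof of Theorem \ref{thm:noexQ} simply transplants Step 3 of the proof of Theorem \ref{thm:Z-per}: it chooses $V$ as a co-sparse subset of a line (resp.\ of all of $\V_\Q$), with gaps at the abscissae $i=n(n+1)$, and shows \emph{directly} that every $u\in H^1_\mu(\Q)$ with $u>0$ is strictly beaten by a discrete translate of itself --- since $\sum_{\vv\in V}|u(\vv)|^q<\sum_{\vv\in V_\infty}|u(\vv)|^q$ (with $V_\infty$ the periodic completion), some finite window of $V_\infty$ already carries more than the whole $V$-sum, and that window can be slid into one of the arbitrarily long gap-free runs of $V$. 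Combined with positivity of ground states this rules out attainment without ever identifying or comparing levels. Your route --- first prove $\EE_{q,V}(\mu,\Q)=\EE_{q,V_\infty}(\mu,\Q)$ by translating near-minimizers of the periodic problem, then contradict the infimum via $E_{q,V_\infty}(u,\Q)<E_{q,V}(u,\Q)$ at a positive putative minimizer --- is also valid and yields the extra information that the defective level coincides with the periodic one, at the cost of invoking the Euler--Lagrange/Nehari machinery. Two remarks: for your upper bound you do not need exponential decay of $w$ (unavailable for a near-minimizer anyway); the summability $\sum_{\vv\in\V_\Q}|w(\vv)|^q<+\infty$ guaranteed by Lemma \ref{lem:mod-ineq-GN} already gives the tail convergence under translation, including across the growing runs in your case (ii), so your ``first delicate point'' is not delicate. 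Your rigidity step (Nehari identity forcing $\lambda<0$, then vanishing at a vertex propagating to $u\equiv0$ by the Kirchhoff condition and ODE uniqueness) is sound and in fact supplies a detail the paper leaves implicit when it asserts that ground states do not vanish. Finally, your example sets (a half-line of defects, resp.\ $\Z\times S$ with growing runs) differ from the paper's gap sets, but both families satisfy the required (non)periodicity conditions and both arguments apply to either choice.
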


The remainder of the paper is organized as follows. Section \ref{sec:prel} collects some basic definitions and preliminary results. Section \ref{sec:D} describes a general framework for the study of $\EE_{q,V}$ on metric graphs. Section \ref{sec:H+Zper} deals with graphs with finitely many vertices and with $\Z$-periodic graphs, proving Theorems \ref{thm:n-cpt-hl}--\ref{thm:Z-per}. Section \ref{sec:QVfin} proves the results on the grid $\Q$ with finitely many point defects, i.e. Theorem \ref{thm:Vfin1} and Proposition \ref{prop:rad}, whereas Section \ref{sec:Vinf} discusses the case of infinitely many point defects on $\Q$ and gives the proof of Theorems \ref{thm:grid-Z-per}--\ref{thm:grid-Z2-per}--\ref{thm:noexQ}. Finally, Appendix \ref{app:linear} collects some results about linear problems related to those studied in the paper.

\subsection*{Notation} In the following, whenever possible and depending on the context we will use simplified notations as $E(u), E_q(u)$ and $\EE(\mu),\EE_{q}(\mu)$ in place of $E_{q,V}(u,\G)$ and $\EE_{q,V}(\mu,\G)$, making use of the full notation only when necessary to avoid confusion.

\section{Preliminaries}
\label{sec:prel}

We begin  recalling some preliminary definitions and estimates that will be largely used in the forthcoming sections.
\subsection{$\Z$-periodic graphs and the two--dimensional square grid $\Q$}
Since we will use it in some arguments later on, we briefly recall here the rigorous definition of $\Z$-periodic graph we adopt, taken from \cite[Section 2]{D-per}.

Let $\K\in \bf G$ be a connected compact graph, i.e. a graph with a finite number of vertices and edges, all of finite length. Let $D$ and $R$ be two non-empty subsets of $\V_\K$ and $\sigma:D\to R$ be a function such that
\begin{itemize}
    \item[$(i)$] $D\cap R=\emptyset$;
    \item[$(ii)$] $\sigma$ is bijective.
\end{itemize}
Consider then $\mathbb{G}:=\bigcup_{i\in\Z}\K_i$, where $\K_i$, $D_i$, $R_i$ are copies of $\K$, $D$, $R$ respectively, for every $i\in\Z$, and, thinking of $\sigma$ as a map from $D_i$ to $R_{i+1}$, say that two vertices $\vv,\ww$ of $\mathbb{G}$ are equivalent, writing $\vv\sim\ww$,  if either
 \begin{itemize}
     \item[$(a)$] $\vv,\ww\in \K_i$, for some $i\in\Z$, and $\vv=\ww$; or
     \item[$(b)$] $\vv\in D_i$, $\ww\in R_{i+1}$, for some $i\in\Z$, and $\sigma(\vv)=\ww$; or
     \item[$(c)$] $\vv\in R_{i+1}$, $\ww\in D_i$, for some $i\in\Z$, and $\sigma(\ww)=\vv$. 
\end{itemize}
It is not difficult to show that this is an equivalence relation on $\V_{\mathbb{G}}$. We then say that the quotient $\G:=\mathbb{G}/\sim$ is a $\Z$-periodic graph with periodicity cell $\K$ and pasting rule $\sigma$ (see Figure \ref{fig:defZ} for a concrete example).

With this definition of $\Z$-periodic graph it is also immediate to give a precise notion of $\Z$-periodic subsets $V$ of $\V_\G$. Indeed, if $\G$ is a $\Z$-periodic graph with periodicity cell $\K$, we say that $V\subseteq\V_\G$ is a $\Z$-periodic set in $\G$ if there exist $\vv_1,\dots,\vv_n\in\V_\K$ such that
\[
V=\bigcup_{i\in\Z}\left\{\vv_1^i,\dots,\vv_n^i\right\}\,,
\]
where $\vv_1^i,\dots,\vv_n^i\in\V_{\K_i}$ denote, for every $i\in\Z$, the copies of $\vv_1,\dots,\vv_n$ in $\K_i$.
\begin{remark}
	For a detailed discussion of this definition of $\Z$-periodic graphs and a comparison with the general definition of periodic graphs as in \cite[Definition 4.1.1]{BK}, we refer to \cite[Section 2 and Appendix A]{D-per}. 
\end{remark}

Clearly the two--dimensional square grid $\Q$ does not satisfy the above definition of $\Z$-periodic graphs. In this paper, we will often think of $\Q=(\V_{\Q}, \E_{\Q})$ as the subset of $\R^2$ with vertices on the lattice $\Z^2$ and edges between every couple of vertices at unitary distance in $\R^2$, so that
\begin{equation*}
\vv\in \V_{\Q}\cong (i,j)\in \Z^{2}\subset \R^{2},
\end{equation*}
and 
\begin{equation*}
\E_{\Q}=\E_{\Q}^{h}\cup \E_{\Q}^{v},
\end{equation*}
with
\begin{align*}
\E_{\Q}^{h}&:=\left\{(i,i+1)\times\{j\}\subset \R^{2},\,i,j\in\Z\right\}\\
\E_{\Q}^{v}&:=\left\{\{i\}\times(j,j+1)\subset\R^{2},\,i,j\in\Z \right\}.
\end{align*}
Sometimes it will also be convenient to think of $\Q$ as the union of infinitely many horizontal and vertical lines in $\R^2$, that is
\begin{equation*}
\Q=\Big(\bigcup_{j\in \Z} H_{j}\Big)\cup\Big(\bigcup_{i\in\Z} V_{i}\Big)\,,
\end{equation*}
with $H_{j}$ being the line of equation $y=j$ and $V_{i}$ that of equation $x=i$, for every $i,j\in\Z$.

\begin{figure}[t]
	\centering
	\includegraphics[width=0.9\textwidth]{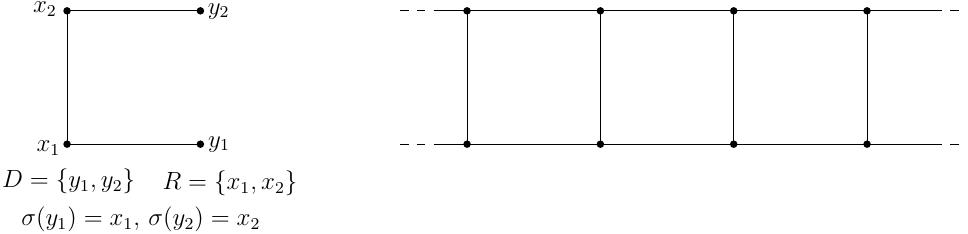}
	\caption{Example of a compact graph $\K$, the sets $D$, $R$, the function $\sigma$ (on the left) and the corresponding $\Z$-periodic graph (on the right) according to the definition described in Section \ref{sec:prel}.}
	\label{fig:defZ}
\end{figure}

\subsection{Gagliardo-Nirenberg inequalities}

Since they will be frequently used in the following, we recall  the following Gagliardo-Nirenberg inequalities, that  hold on every noncompact metric graph $\G$:

\begin{align}
\|u\|_{L^{p}(\G)}^{p} & \leq C_{p}\|u\|_{L^{2}(\G)}^{\f{p}{2}+1}\|u'\|_{L^{2}(\G)}^{\f{p}{2}-1},\qquad\forall\, u\in H^1(\G),\,\forall\, p>2, \label{GN-1d}\\
\|u\|_{L^{\infty}(\G)}^{2}&\leq C_\infty\|u\|_{L^2(\G)}\|u'\|_{L^2(\G)},\qquad\forall\, u\in H^1(\G)\,,\label{GN-inf}
\end{align}
with $C_p,C_\infty$ suitable 
constants depending on $\G$.
Furthermore, if $\G$ is the two-dimensional grid $\Q$, then also the next Gagliardo-Nirenberg inequality holds (see \cite[Theorem 2.3]{ADST}):
\begin{equation}
\label{GN-2d}
\|u\|_{L^{p}(\Q)}^{p}\leq M_{p}\|u\|_{L^{2}(\Q)}^{2} \|u'\|_{L^{2}(\Q)}^{p-2}\qquad \forall\, u\in H^{1}(\Q),\,\forall\, p>2\,,
\end{equation}
with $M_{p}>0$ depending only on $p$. 

A further Gagliardo-Nirenberg type inequality can be derived also for the sum of pointwise nonlinearities at the vertices of the graph.
\begin{lemma}
	\label{lem:mod-ineq-GN}
	For every $\G\in\mathbf{G}$ and $q\in(2,4)$ there exists $C>0$, depending on $\G$ and $q$, such that
	\begin{equation*}
	\sum_{\vv\in\V_{\G}}|u(\vv)|^{q}\leq C\left( \|u\|_{L^{q}(\G)}^{q}+\|u\|_{L^{2}(\G)}^{\f{q}{2}}\|u'\|_{L^{2}(\G)}^{\f{q}{2}}\right)\qquad \forall\, u\in H^{1}(\G)\,.
	\end{equation*}
\end{lemma}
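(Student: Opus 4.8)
The plan is to estimate the sum $\sum_{\vv\in\V_\G}|u(\vv)|^q$ by controlling each vertex value $|u(\vv)|$ in terms of the $L^q$ and $H^1$ norms of $u$ on the edges incident to $\vv$, and then summing these local estimates. The key structural fact I would exploit is the uniform lower bound $\inf_{e\in\E_\G}|e|>0$ coming from the definition of the class $\mathbf{G}$, which lets me fix a length scale $\ell_0>0$ such that every edge has length at least $\ell_0$. This uniformity is what prevents the constants from blowing up as one sums over infinitely many vertices.

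First I would isolate, for each vertex $\vv$, an edge $e_\vv$ incident at $\vv$ and work on a subinterval of $e_\vv$ of fixed length $\ell:=\min\{\ell_0,1\}$ starting at $\vv$ (identifying $e_\vv$ with an interval $[0,|e_\vv|]$ and $\vv$ with the endpoint $0$). On this interval I would prove a one-dimensional pointwise bound of the form
\begin{equation*}
|u(\vv)|^q \leq C_\ell\left(\int_0^\ell |u(x)|^q\dx + \left(\int_0^\ell |u(x)|^2\dx\right)^{q/4}\left(\int_0^\ell |u'(x)|^2\dx\right)^{q/4}\right).
\end{equation*}
This is the crux of the argument and where I expect most of the work to lie. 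The natural route is to start from the elementary identity $|u(\vv)|^q = |u(x)|^q - \int_0^x \frac{d}{dt}|u(t)|^q\,dt$, integrate in $x$ over $[0,\ell]$, and bound the resulting terms: the first gives the $L^q$ contribution, while $\frac{d}{dt}|u|^q = q|u|^{q-2}u\,u'$ together with Hölder's inequality (with exponents chosen to split $|u|^{q-1}|u'|$) yields the mixed term. Because $q\in(2,4)$, the exponent $q/4$ stays below $1$, which is exactly what makes the final summation work; tracking the precise Hölder exponents so that the $L^2$-in-$u$ and $L^2$-in-$u'$ pieces come out with equal powers $q/4$ is the routine but delicate bookkeeping step.

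Having secured the local bound, I would sum over all $\vv\in\V_\G$. The $L^q$ terms sum directly to at most $C\|u\|_{L^q(\G)}^q$ up to a bounded overlap factor controlled by the (finite) degrees, since each edge is used by at most its two endpoints. For the mixed terms I would apply the discrete (superadditivity / subadditivity) inequality: since $q/4<1$, the map $t\mapsto t^{q/4}$ is concave, and setting $a_\vv:=\int_{e_\vv}|u|^2$, $b_\vv:=\int_{e_\vv}|u'|^2$, one has $\sum_\vv a_\vv^{q/4}b_\vv^{q/4}\leq\big(\sum_\vv a_\vv\big)^{q/4}\big(\sum_\vv b_\vv\big)^{q/4}$ by Hölder with conjugate exponents $4/q$ and $4/(4-q)$ applied to the sequences $(a_\vv^{q/4})$ and $(b_\vv^{q/4})$ — more directly, $\sum_\vv (a_\vv b_\vv)^{q/4}\le(\sum_\vv a_\vv b_\vv)^{q/4}$ fails in general, so I would instead use Hölder to write $\sum_\vv a_\vv^{q/4}b_\vv^{q/4}\le(\sum_\vv a_\vv)^{q/4}(\sum_\vv b_\vv)^{q/4}$ and bound $\sum_\vv a_\vv\le 2\|u\|_{L^2(\G)}^2$, $\sum_\vv b_\vv\le 2\|u'\|_{L^2(\G)}^2$. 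This produces exactly the $\|u\|_{L^2(\G)}^{q/2}\|u'\|_{L^2(\G)}^{q/2}$ term in the statement. Absorbing all constants into a single $C=C(\G,q)$ then gives the claim. The main obstacle, to reiterate, is obtaining the sharp local pointwise estimate with the correct exponents $q/4$ on both $L^2$ factors, since that precise balance is what survives the concavity-based summation over infinitely many vertices.
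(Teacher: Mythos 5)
Your proposal is correct, but its second half follows a genuinely different route from the paper's. The paper also starts from the local identity $|u(\vv)|^q\le \frac{1}{\ell_\vv}\|u\|^q_{L^q(e_\vv\cap[0,\ell_\vv])}+q\int_{e_\vv\cap[0,\ell_\vv]}|u|^{q-1}|u'|$, but it then sums these \emph{before} estimating the mixed term, obtaining $q\int_\G|u|^{q-1}|u'|\,dx$ globally, and concludes by a single global H\"older step plus the Gagliardo--Nirenberg inequality \eqref{GN-1d} with $p=2(q-1)$, which delivers $\|u\|_{L^{2(q-1)}(\G)}^{q-1}\|u'\|_{L^2(\G)}\le C\|u\|_{L^2(\G)}^{q/2}\|u'\|_{L^2(\G)}^{q/2}$ at once. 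You instead insist on a fully balanced \emph{local} estimate on each interval and then sum via the discrete H\"older/$\ell^1\hookrightarrow\ell^{q/2}$ argument; this works (your inequality $\sum_\vv a_\vv^{q/4}b_\vv^{q/4}\le(\sum_\vv a_\vv)^{q/4}(\sum_\vv b_\vv)^{q/4}$ is valid precisely because $q/2>1$, and the $2$-fold overlap of your intervals is harmless), and it has the merit of not invoking the global inequality \eqref{GN-1d}. The price is that your "crux" local bound is slightly more delicate than a single H\"older application: on a bounded interval, H\"older applied to $\int_0^\ell|u|^{q-1}|u'|$ gives $\|u\|_{L^{2(q-1)}(0,\ell)}^{q-1}\|u'\|_{L^2(0,\ell)}$, and converting this to the balanced form forces you through the local sup-norm estimate $\|u\|_{L^\infty(0,\ell)}^2\le \ell^{-1}\|u\|_{L^2(0,\ell)}^2+2\|u\|_{L^2(0,\ell)}\|u'\|_{L^2(0,\ell)}$, whose lower-order term produces a contribution $\|u\|_{L^2(0,\ell)}^{q-1}\|u'\|_{L^2(0,\ell)}$ that must be split (by distinguishing $\|u'\|_{L^2}\le\|u\|_{L^2}$ from the reverse, or by Young) into $\|u\|_{L^2(0,\ell)}^q$ --- absorbed into the $L^q$ part via H\"older on the bounded interval --- and $\|u\|_{L^2(0,\ell)}^{q/2}\|u'\|_{L^2(0,\ell)}^{q/2}$. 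The cleanest implementation of your plan actually bypasses the $\int|u|^{q-1}|u'|$ term altogether and writes $|u(\vv)|^q\le\|u\|^q_{L^\infty(0,\ell)}$ directly, then uses the displayed local estimate; either way the statement you claim locally is true and the proof closes.
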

\begin{proof}
	For every $\vv\in\V_{\G}$, let $\E_\vv$ be the set of all edges incident at $\vv$ and set $\displaystyle\ell_\vv:=\min\left\{1,\frac12\min_{e\in\E_\vv}|e|\right\}$. Note that, since $\G\in\mathbf{G}$ ensures that $\inf_{e\in\E_\G}|e|=:\alpha>0$, then $\alpha/2 \le \ell_\vv \le 1$ for every $\vv\in\V_\G$. For fixed $\vv$, let then $e_\vv\in\E_\vv$ be a given edge incident at $\vv$ and let $\vv$ be identified with $0$ along $e_\vv$. Then we have
	\[
	\begin{split}
	\left|\ell_\vv|u(\vv)|^q-\|u\|_{L^q(e_\vv\cap[0,\ell_\vv])}^q\right|=&\,\left|\int_{e_\vv\cap[0,\ell_\vv]}(|u(\vv)|^q-|u(y)|^q)\,dy\right|\\
	\leq&\,\ell_\vv\int_{e_\vv}|(|u(z)|^q)|'\,dz\leq q\ell_\vv\int_{e_\vv}|u(z)|^{q-1}|u'(z)|\,dz\,,
	\end{split}	
	\]
	which, recalling the lower bound on $\ell_\vv$,  yields
	\[
	|u(\vv)|^q\leq \frac2\alpha\|u\|_{L^q(e_\vv\cap[0,\ell_\vv])}^q+q\int_{e_\vv\cap[0,\ell_\vv]}|u(z)|^{q-1}|u'(z)|\,dz\,.
	\]
	Note that, by definition of $\ell_\vv$, if $\vv_1$ and $\vv_2$ are two different vertices in $\V_\G$, for every pair of edges $e_{\vv_1}\in\E_{\vv_1}$, $e_{\vv_2}\in\E_{\vv_2}$ the sets $e_{\vv_1}\cap[0,\ell_{\vv_1}]$, $e_{\vv_2}\cap[0,\ell_{\vv_2}]$ share at most one point (the one identified with $\ell_{\vv_1}$ and $\ell_{\vv_2}$ respectively). Hence, summing over $\vv\in\V_\G$ the above inequality and using H\"older inequality and \eqref{GN-1d} with $p=2(q-1)$ gives
	\[
	\begin{split}
	\sum_{\vv\in\V_\G}|u(\vv)|^q&\,\leq\frac2\alpha\|u\|_{L^q\left(\bigcup_{\vv\in\V_\G}(e_\vv\cap[0,\ell_\vv])\right)}^q+q\int_{\bigcup_{\vv\in\V_\G}(e_\vv\cap[0,\ell_\vv])}|u|^{q-1}|u'|\,dx\\
	&\, \leq \frac2\alpha\|u\|_{L^q(\G)}^q+q\int_\G|u|^{q-1}|u'|\,dx\leq\frac2\alpha\|u\|_{L^q(\G)}^q+q\|u\|_{L^{2(q-1)}(\G)}^{q-1}\|u'\|_{L^2(\G)}\\
	&\,\leq \frac2\alpha\|u\|_{L^q(\G)}^q+qC_{2(q-1)}^{1/2}\|u\|_{L^2(\G)}^{\frac q2}\|u'\|_{L^2(\G)}^{\frac q2}\,,
	\end{split}
	\]
	and we conclude.
\end{proof}
The last estimate we report in this section, given in the next lemma, will be particularly useful when proving Theorem \ref{thm:grid-Z-per} in Section \ref{sec:Vinf}.
\begin{lemma}
	\label{lem:ineq-G'}
	Let $\Q$ be the two--dimensional square grid and $\G'\subset \Q$ be a subgraph (i.e. $\V_{\G'}\subseteq\V_\Q$ and $\E_{\G'}\subseteq\E_\Q$) with $|\G'|>0$ and such that
	\begin{equation*}
	\min\left\{\sup_{j\in \Z}\#\left(\V_{\G'}\cap H_{j}\right), \sup_{j\in \Z}\#\left(\V_{\G'}\cap V_{j}\right) \right\}<+\infty\,,
	\end{equation*}
	where $H_j, V_j$ denote the $j$-th horizontal and vertical line in $\Q$ as above.
	Then, for every $q>2$, there exists $K>0$, depending on $\G'$ and $q$, such that  
	\begin{equation*}
	\|u\|_{L^{q}(\G')}^{q}\leq K\|u\|_{L^{2}(\Q)}\|u'\|_{L^{2}(\Q)}^{q-1}\qquad \forall\,u\in H^{1}(\Q).
	\end{equation*}
\end{lemma}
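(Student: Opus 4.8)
The plan is to read the claim as a trace-type Gagliardo--Nirenberg inequality: $\G'$ is a codimension-one subset of the two-dimensional grid, and the fact that the right-hand side carries only one power of $\|u\|_{L^2(\Q)}$ (instead of the two appearing in the bulk estimate \eqref{GN-2d}), compensated by an extra power of $\|u'\|_{L^2(\Q)}$, is exactly the ``trace gain'' produced by restricting to a thin set. Without loss of generality I would assume that the minimum in the hypothesis is attained in the horizontal direction, i.e. $\sup_j \#(\V_{\G'}\cap H_j)=:N<\infty$, the other case being identical after exchanging the two coordinates. Since every edge has unit length and $\inf_e|e|>0$, the first step is to reduce the integral over $\G'$ to the values of $u$ at its vertices: on each edge $e\cong[0,1]$ of $\G'$ a one-dimensional interpolation retaining $\|u\|_{L^2(e)}$ (so as not to over-count oscillations) bounds $\|u\|_{L^q(e)}^q$ by the endpoint values together with a remainder in $\|u'\|_{L^2(e)}$; summing and using $\sum_e\|u'\|_{L^2(e)}^q\le\big(\sum_e\|u'\|_{L^2(e)}^2\big)^{q/2}\le\|u'\|_{L^2(\Q)}^q$ (legitimate since $q>2$), it then suffices to estimate $\sum_{\vv\in\V_{\G'}}|u(\vv)|^q$, where at most $N$ vertices lie on each horizontal line.

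The core is a genuinely two-directional pointwise bound at each vertex $\vv=(i,j)\in\V_{\G'}$. Applying the one-dimensional endpoint estimate once along the vertical line $V_i$ and once along the horizontal line $H_j$ gives
\[
|u(i,j)|^2\le 2\,\|u\|_{L^2(V_i)}\|u'\|_{L^2(V_i)},\qquad |u(i,j)|^2\le 2\,\|u\|_{L^2(H_j)}\|u'\|_{L^2(H_j)}.
\]
The idea is to write $|u(i,j)|^q$ as a product of powers of these two bounds and then to sum, first over the $\le N$ vertices of each row and then over the rows, exploiting the thinness to cap at $N$ the number of vertical lines that contribute at each height. The passage from per-line to global norms would be carried out through a discrete H\"older inequality together with the elementary estimate $\sum_k a_k^{\alpha}b_k^{\beta}\le(\sum_k a_k)^{\alpha}(\sum_k b_k)^{\beta}$ (valid for $a_k,b_k\ge0$ and $\alpha,\beta\ge0$ with $\alpha+\beta\ge1$), applied to $a_j=\|u\|_{L^2(H_j)}^2$, $b_j=\|u'\|_{L^2(H_j)}^2$, and their vertical analogues, where $\sum_j\|u\|_{L^2(H_j)}^2\le\|u\|_{L^2(\Q)}^2$ and $\sum_j\|u'\|_{L^2(H_j)}^2\le\|u'\|_{L^2(\Q)}^2$. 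This is the same coupling between the two line-families that underlies the sharp proof of \eqref{GN-2d} in \cite[Theorem 2.3]{ADST}, here transported to the thin subgraph.

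The hard part will be the sharp bookkeeping in this summation. The difficulty is to route the estimate so that \emph{exactly} one power of $\|u\|_{L^2(\Q)}$ and $q-1$ powers of $\|u'\|_{L^2(\Q)}$ survive: slicing in a single direction, or extracting a supremum and invoking \eqref{GN-inf} in the form $\|u\|_{L^\infty}^{2}\le C\|u\|_{L^2(\Q)}\|u'\|_{L^2(\Q)}$, only yields the weaker (symmetric) bound $\|u\|_{L^2(\Q)}^{q/2}\|u'\|_{L^2(\Q)}^{q/2}$, which does \emph{not} imply the claim when $\|u\|_{L^2(\Q)}>\|u'\|_{L^2(\Q)}$ (indeed the asserted inequality is sharp and is saturated by two-dimensional bump profiles, where both sides scale identically). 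Keeping the horizontal and vertical contributions entangled throughout the sums, and using the bound $N$ to prevent the transverse lines from being counted more than a bounded number of times, is therefore essential; this is precisely where the hypothesis that $\G'$ is thin in one direction enters in an irreplaceable way.
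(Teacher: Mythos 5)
Your plan has a structural gap that no amount of careful bookkeeping can repair: the building blocks you propose for the ``core'' step are symmetric in $u$ and $u'$, so they cannot produce the asymmetric power balance $\|u\|_{L^2(\Q)}\|u'\|_{L^2(\Q)}^{q-1}$. Each factor you use at a vertex has the form $|u(i,j)|^2\le 2\|u\|_{L^2(\ell)}\|u'\|_{L^2(\ell)}$ for a line $\ell$, i.e.\ it carries exactly one power of a $\|u\|_{L^2}$-type quantity and one power of a $\|u'\|_{L^2}$-type quantity. Any product of powers of such factors, summed via $\sum_k a_k^{\alpha}b_k^{\beta}\le(\sum_k a_k)^{\alpha}(\sum_k b_k)^{\beta}$, preserves this $1\!:\!1$ ratio, so after all the summations you necessarily land on $\|u\|_{L^2(\Q)}^{q/2}\|u'\|_{L^2(\Q)}^{q/2}$ --- precisely the ``weaker symmetric bound'' that you yourself correctly identify as insufficient (there is no Poincar\'e inequality on $\Q$, since $\lambda(\Q)=0$). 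The same defect infects your first reduction: the per-edge remainders you describe sum to either $\|u'\|_{L^2(\Q)}^{q}$ or to the symmetric quantity, and neither is controlled by $\|u\|_{L^2(\Q)}\|u'\|_{L^2(\Q)}^{q-1}$ (the former fails for oscillatory $u$, the latter for spread-out $u$). To break the symmetry you need an ingredient in which $u$ and $u'$ enter with genuinely different weights, and your outline contains none.

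The paper supplies exactly two such ingredients. First, instead of a trace bound at vertices it uses the fundamental theorem of calculus along the \emph{whole} line $H_j$: for $x$ on a horizontal edge of $\G'$ in $H_j$, $|u(x)|^{q}\le\int_{H_j}|(|u|^q)'|\le q\int_{H_j}|u|^{q-1}|u'|$, which has degree $q-1$ in $u$ and degree $1$ in $u'$. Multiplying by the at most $R$ edges of $\G'$ per line (this is where the thinness hypothesis enters --- it only needs to control a counting constant, not the coupling of directions), summing over $j$ and applying H\"older gives $\|u\|_{L^q(\G'\cap\bigcup_j H_j)}^q\le qR\,\|u\|_{L^{2(q-1)}(\Q)}^{q-1}\|u'\|_{L^2(\Q)}$. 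Second --- and this is the step your proposal replaces with an unworkable from-scratch coupling --- the factor $\|u\|_{L^{2(q-1)}(\Q)}^{q-1}$ is converted into $M_{2(q-1)}^{1/2}\|u\|_{L^2(\Q)}\|u'\|_{L^2(\Q)}^{q-2}$ by invoking the two-dimensional Gagliardo--Nirenberg inequality \eqref{GN-2d} on the \emph{full grid} with $p=2(q-1)$; it is \eqref{GN-2d}, used as a black box, that carries the two-dimensionality and the asymmetric exponents. (A separate, similar argument handles the vertical edges of $\G'$, whose number between consecutive horizontal lines is again at most $R$.) If you want to salvage your write-up, replace the vertex-trace reduction and the two-directional product bound by these two steps.
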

\begin{proof}
	Without loss of generality, suppose that
	\[
	\min\left\{\sup_{j\in \Z}\#\left(\V_{\G'}\cap H_{j}\right), \sup_{j\in \Z}\#\left(\V_{\G'}\cap V_{j}\right) \right\}=\sup_{j\in \Z}\#\left(\V_{\G'}\cap H_{j}\right)=:R\,.
	\]
	For every $j\in\Z$, every $e\in\E_{\G'}\cap H_j$ (if any) and every $x\in e$, we have
	\begin{equation}
	\label{u-q}
	|u(x)|^{q}\leq \int_{H_{j}}\left|\left(|u|^{q}\right)'\right|\,dy\leq q\int_{H_{j}}|u|^{q-1}|u'|\,dy\,,
	\end{equation}
	so that, integrating  on $e$ and summing over $e\in \E_{\G'}\cap H_j$ (recalling that there are at most $R$ of such edges), there results
	\begin{equation*}
	\label{int-G-H}
	\int_{\G'\cap H_{j}}|u|^{q}\,dy\leq qR\int_{H_{j}}|u|^{q-1}|u'|\,dy.
	\end{equation*}
	Summing over $j\in \Z$ and using the H\"older inequality we obtain
\[
\|u\|_{L^{q}(\G'\cap\bigcup_{j\in\Z}H_j)}^{q}\leq  q R\int_{\Q}|u|^{q-1}|u'|dy\leq q R \|u\|_{L^{2(q-1)}(\Q)}^{q-1}\|u'\|_{L^{2}(\Q)}
\]
	and applying \eqref{GN-2d} with $p=2(q-1)$ yields
	\begin{equation}
	\label{eq:uHj}
	\|u\|_{L^{q}(\G'\cap\bigcup_{j\in\Z}H_j)}^{q}\leq C\|u\|_{L^2(\Q)} \|u'\|_{L^2(\Q)}^{q-1}
	\end{equation}
	for a suitable $C>0$ depending on $q$ and $R$.
	
	Now, since $\E_{\G'}\subseteq\E_\Q$ and $\#\left(\V_{\G'}\cap H_j\right)\leq R$ for every $j$, the number of vertical edges in $\G'$ between $H_j$ and $H_{j+1}$ is bounded from above by $R$ uniformly on $j$. Arguing as above, for every such edge $e$ (if any) we have
	\[
	\|u\|_{L^q(e)}^q\leq\int_{e\cup H_j}|(|u|^q)'|\,dy\,,
	\] 
	so that, summing first over all vertical edges $e\in\E_{\G'}$ between $H_j$ and $H_{j+1}$, and then over $j\in\Z$, we obtain
	\[
	\|u\|_{L^q(\G'\cap\bigcup_{k\in\Z}V_k)}^q\leq R\int_{\Q\cap\bigcup_{j\in\Z}H_j}|(|u|^q)'|\,dy+\int_{\Q\cap\bigcup_{k\in\Z}V_k}|(|u|^q)'|\,dy\leq R\int_\Q|(|u|^q)'|\,dy\,,
	\]
	and, repeating the previous computations and combining with \eqref{eq:uHj}, we conclude.
\end{proof}

\section{General properties of $\D_{q,V}$}
\label{sec:D}
In this section we start the analysis of the minimization problem $\D_{q,V}$, developing a general framework that will then be used in the following sections to deal with the various families of graphs we are interested in. To this end, we introduce the following notation
\begin{equation}
\label{lambdaG}
\lambda(\G):=\inf_ {w\in H^1(\G)\setminus\{0\}}\frac{\|w'\|_{L^2(\G)}^2}{\|w\|_{L^2(\G)}^2}
\end{equation}
for the bottom of the spectrum of the operator $-d^2/dx^2$ on $\G$ coupled with homogeneous Kirchhoff conditions at every vertex of $\G$. 
\begin{remark}
	\label{rem:contD}
	Note that, for given $\G$, $V$ and $q$, the ground state level $\D_{q,V}(\,\cdot\,,\G):[0,+\infty)\to\R$ is continuous with respect to $\mu$. Indeed, since every $u\in H_\mu^1(\G)$ can be written as $u=\sqrt{\mu}v$ for some $v\in H_1^1(\G)$, we have
	\[
	\D_{q,V}(\mu,\G)=\inf_{v\in H_1^1(\G)}f_v(\mu)
	\]
	where
	\[
	f_v(\mu):=E_{q,V}(\sqrt{\mu}v,\G)=\f\mu2\|v'\|_{L^2(\G)}^2-\f{\mu^{\frac q2}}q\sum_{v\in V}|v(\vv)|^q\,.
	\]
	Since $q>2$, $f_v$ is a concave function of $\mu$ for every $v\in H_1^1(\G)$, thus showing that $\D_{q,V}$ is concave in $\mu$ (and therefore continuous).
\end{remark}
The first lemma of the section provides general upper and lower bounds on the ground state level $\D$.
\begin{lemma}
\label{lem:level}
Let $\G\in\mathbf{G}$ and $V\subseteq\V_{\G}$. For every $q\in(2,4)$ and every $\mu>0$ there results
\begin{equation}
\label{below-linear}
-\infty<\D_{q,V}(\mu,\G)\leq \f{\la(\G)}{2}\mu.
\end{equation}
\end{lemma}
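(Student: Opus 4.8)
The plan is to establish the two bounds in \eqref{below-linear} separately, as they require genuinely different arguments.

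For the upper bound $\D_{q,V}(\mu,\G)\leq \f{\la(\G)}{2}\mu$, I would exploit the variational characterization \eqref{lambdaG} of $\la(\G)$. Since $\D_{q,V}(\mu,\G)$ is an infimum over $H^1_\mu(\G)$, it suffices to produce test functions whose energy approaches $\f{\la(\G)}{2}\mu$. Given any $w\in H^1(\G)\setminus\{0\}$, I would normalize it to mass $\mu$ by setting $u=\sqrt{\mu}\,w/\|w\|_{L^2(\G)}\in H^1_\mu(\G)$, so that
\[
E_{q,V}(u,\G)=\f{\mu}{2}\f{\|w'\|_{L^2(\G)}^2}{\|w\|_{L^2(\G)}^2}-\f1q\sum_{\vv\in V}|u(\vv)|^q\leq \f{\mu}{2}\f{\|w'\|_{L^2(\G)}^2}{\|w\|_{L^2(\G)}^2}\,,
\]
where the inequality uses that the pointwise nonlinear term is nonnegative (being a sum of $q$-th powers of absolute values). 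Taking the infimum over $w$ and recalling \eqref{lambdaG} yields $\D_{q,V}(\mu,\G)\leq \f{\la(\G)}{2}\mu$ at once. I expect this direction to be essentially immediate.

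For the lower bound (boundedness from below, i.e. $\D_{q,V}(\mu,\G)>-\infty$), the task is to control the negative term $-\f1q\sum_{\vv\in V}|u(\vv)|^q$ by the kinetic energy, using the subcriticality $q\in(2,4)$. The natural tool is Lemma \ref{lem:mod-ineq-GN}, which bounds $\sum_{\vv\in\V_\G}|u(\vv)|^q$ (and hence the smaller sum over $V\subseteq\V_\G$) by $C\big(\|u\|_{L^q(\G)}^q+\|u\|_{L^2(\G)}^{q/2}\|u'\|_{L^2(\G)}^{q/2}\big)$. I would then estimate $\|u\|_{L^q(\G)}^q$ via the one-dimensional Gagliardo--Nirenberg inequality \eqref{GN-1d}, which on the constraint $\|u\|_{L^2(\G)}^2=\mu$ gives $\|u\|_{L^q(\G)}^q\leq C_q\,\mu^{q/4+1/2}\|u'\|_{L^2(\G)}^{q/2-1}$. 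Combining, both contributions to $\sum_{\vv\in V}|u(\vv)|^q$ are bounded by a constant (depending on $\mu,q,\G$) times $\|u'\|_{L^2(\G)}^{s}$ with exponent $s\in\{q/2-1,\,q/2\}$, and crucially $q\in(2,4)$ forces $s<2$. Writing $t=\|u'\|_{L^2(\G)}$, the energy obeys
\[
E_{q,V}(u,\G)\geq \f12 t^2 - C_1 t^{q/2} - C_2 t^{q/2-1}\,,
\]
with $q/2<2$ and $q/2-1<1$.

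The final step is to observe that the right-hand side above, as a function of $t\geq0$, is bounded below: the leading $\f12 t^2$ term dominates the subquadratic terms as $t\to+\infty$, so the function attains a finite minimum over $[0,+\infty)$ by continuity and coercivity. This minimum is a finite constant depending only on $\mu,q,\G$, giving $\D_{q,V}(\mu,\G)>-\infty$. The only mildly delicate point I anticipate is bookkeeping the various mass-dependent constants and confirming that \emph{all} exponents of $t$ arising from \eqref{GN-1d} and Lemma \ref{lem:mod-ineq-GN} are strictly less than $2$ — which is precisely where the $L^2$-subcritical restriction $q<4$ enters and is the mechanism that prevents the energy from being unbounded below.
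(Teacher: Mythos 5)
Your proposal is correct and follows essentially the same route as the paper: the upper bound by normalizing test functions and discarding the nonnegative pointwise term, and the lower bound by reducing to $V=\V_\G$, applying Lemma \ref{lem:mod-ineq-GN} together with \eqref{GN-1d} for $p=q$, and noting that the resulting exponents $q/2$ and $q/2-1$ of $\|u'\|_{L^2(\G)}$ are strictly below $2$ precisely because $q<4$.
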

\begin{proof}
	The upper bound in \eqref{below-linear} is a direct consequence of the definition of $\D$ and $\lambda(\G)$. As for the lower bound, note that it is enough to prove it when $V=\V_\G$, since
	\begin{equation*}
	\sum_{\vv\in V}|u(\vv)|^{q}\leq \sum_{\vv\in \V_{\G}}|u(\vv)|^{q}\,,\qquad \forall\,u\in H^{1}(\G),\, \forall\,V\subseteq\V_{\G}.
	\end{equation*} 
	Let then $V=\V_\G$. By Lemma \ref{lem:mod-ineq-GN} and inequality \eqref{GN-1d} with $p=q$, for every $u\in H^{1}_{\mu}(\G)$ we have
	\begin{equation*}
	\begin{split}
	E(u)&\geq\f12\|u'\|_{L^{2}(\G)}^{2}-\f{C}{q}\left(\|u\|_{L^{q}(\G)}^{q}+\|u\|_{L^{2}(\G)}^{\f{q}{2}}\|u'\|_{L^{2}(\G)}^{\f{q}{2}}\right)\\
	&\geq \f{1}{2}\|u'\|_{L^{2}(\G)}^{2}-\f{CC_{q}\mu^{\f{q}{4}+\f{1}{2}}}{q}\|u'\|_{L^{2}(\G)}^{\f{q}{2}-1}-\frac Cq\mu^{\f{q}{4}}\|u'\|_{L^{2}(\G)}^{\f{q}{2}}\,,
	\end{split}
	\end{equation*}
	where $C$ is the constant in Lemma \ref{lem:mod-ineq-GN}. The lower bound in \eqref{below-linear} then follows by $q<4$.
\end{proof}
In view of Lemma \ref{lem:level}, we give the next definition.   
\begin{definition}
\label{def:mu-star-q}
Let $\G\in\mathbf{G}$ and $V\subseteq \V_\G$. For every $q\in(2,4)$, set
\begin{equation*}
\mu^{*}_{q}:=\inf\left\{\mu\geq 0\,:\,\D_{q,V}(\mu,\G)<\frac{\lambda(\G)}{2}\mu\right\}.
\end{equation*}
\end{definition}
The quantity $\mu_q^*$ plays an important role in the characterization of the behaviour of $\D_{q,V}$ in terms of $\mu$, as described in the next lemma.
\begin{lemma}
\label{lem:mu-star}
Let $\G\in \mathbf{G}$ and $V\subseteq\V_\G$. Then, for every $q\in (2,4)$, it holds
\[
\mu^*_q\in [0,+\infty).
\]
Moreover, 
\begin{itemize}
\item[$(a)$] if $\mu\in(0,\mu^*_q]$, then $\D_{q,V}(\mu,\G)=\frac{\lambda(\G)}{2}\mu$, but $E_{q,V}(u,\G)>\frac{\lambda(\G)}2\mu$ for every $u\in H^1_\mu(\G)$ and every $\mu\in(0,\mu_q^*)$; 
\item[$(b)$] if $\mu>\mu^*_q$, then $\D_{q,V}(\mu,\G)<\frac{\lambda(\G)}{2}\mu$. 
\end{itemize}

\end{lemma}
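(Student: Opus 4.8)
The plan is to reduce everything to the behaviour of the single function
\[
g(\mu):=\D_{q,V}(\mu,\G)-\f{\la(\G)}{2}\mu,\qquad \mu\ge0.
\]
By Lemma~\ref{lem:level} we already know $g\le0$; by Remark~\ref{rem:contD} the map $\mu\mapsto\D_{q,V}(\mu,\G)$ is concave and continuous, so $g$ is concave on $[0,+\infty)$ and continuous on $(0,+\infty)$, and $g(0)=0$ (the only element of $H^1_\mu(\G)$ for $\mu=0$ being the zero function). In these terms $\mu_q^*=\inf S$ with $S:=\{\mu\ge0:g(\mu)<0\}$, whence $\mu_q^*\ge0$ is immediate. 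To see that $\mu_q^*<+\infty$ I would fix a single $v\in H^1_1(\G)$ with $v(\overline\vv)\ne0$ at some $\overline\vv\in V$ (such $v$ exists since $V\ne\emptyset$): writing $f_v$ as in Remark~\ref{rem:contD}, the nonlinear term $-\f{s^{q/2}}{q}\sum_{\vv\in V}|v(\vv)|^q$ carries exponent $q/2>1$ and a strictly negative coefficient, so $f_v(s)-\f{\la(\G)}{2}s\to-\infty$ as $s\to+\infty$. Since $\D_{q,V}(s,\G)\le f_v(s)$, this yields $g(s)<0$ for all large $s$, so $S\ne\emptyset$ and $\mu_q^*<+\infty$.

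Statement $(b)$ I would deduce purely from concavity. Let $\mu>\mu_q^*$. By definition of the infimum there is $s_0\in S$ with $0<s_0<\mu$, so $g(s_0)<0$. Writing $s_0=\theta\mu$ with $\theta=s_0/\mu\in(0,1)$ and using concavity together with $g(0)=0$,
\[
g(s_0)=g\big((1-\theta)\,0+\theta\mu\big)\ge(1-\theta)g(0)+\theta g(\mu)=\theta\,g(\mu).
\]
Hence $\theta\,g(\mu)\le g(s_0)<0$, and dividing by $\theta>0$ gives $g(\mu)<0$, which is exactly $\D_{q,V}(\mu,\G)<\f{\la(\G)}{2}\mu$.

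For $(a)$, the equality $\D_{q,V}(\mu,\G)=\f{\la(\G)}{2}\mu$ on $(0,\mu_q^*]$ is the soft part: for $\mu<\mu_q^*$ one has $\mu\notin S$ (every point of $S$ is $\ge\inf S=\mu_q^*$), so $g(\mu)\ge0$, which combined with $g\le0$ gives $g(\mu)=0$; the endpoint $\mu=\mu_q^*$ (when positive) then follows by continuity of $g$. The real content is the strict inequality for $\mu\in(0,\mu_q^*)$, which I would prove by contradiction. If some $u_0\in H^1_\mu(\G)$ had $E_{q,V}(u_0,\G)\le\f{\la(\G)}{2}\mu$, then, $\D_{q,V}(\mu,\G)=\f{\la(\G)}{2}\mu$ being already established, necessarily $E_{q,V}(u_0,\G)=\f{\la(\G)}{2}\mu$. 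Setting $v_0:=u_0/\sqrt{\mu}\in H^1_1(\G)$ and recalling $f_{v_0}$ from Remark~\ref{rem:contD}, I introduce
\[
a:=\f12\|v_0'\|_{L^2(\G)}^2-\f{\la(\G)}{2}\ge0,\qquad b:=\f1q\sum_{\vv\in V}|v_0(\vv)|^q\ge0,
\]
the nonnegativity of $a$ coming from the definition \eqref{lambdaG} of $\la(\G)$, so that $f_{v_0}(s)-\f{\la(\G)}{2}s=as-bs^{q/2}$. The equality at $s=\mu$ reads $a\mu-b\mu^{q/2}=0$, i.e. $a=b\,\mu^{q/2-1}$. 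If $b>0$, then $a>0$ and, since $q/2-1>0$, for every $s>\mu$
\[
f_{v_0}(s)-\f{\la(\G)}{2}s=bs\big(\mu^{q/2-1}-s^{q/2-1}\big)<0,
\]
so that $\D_{q,V}(s,\G)\le f_{v_0}(s)<\f{\la(\G)}{2}s$ for all $s>\mu$; this gives $\mu_q^*\le\mu$, contradicting $\mu<\mu_q^*$.

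The main obstacle is the remaining case $b=0$, which forces $a=0$, i.e. $\|v_0'\|_{L^2(\G)}^2=\la(\G)\|v_0\|_{L^2(\G)}^2$: in other words $v_0$ would attain the infimum \eqref{lambdaG}. To close the argument I must rule this out, and this is precisely where noncompactness of $\G$ enters: a minimizer of \eqref{lambdaG} would be an $L^2$ ground state of the Kirchhoff Laplacian at the bottom of its spectrum, which does not exist on a noncompact $\G\in\mathbf{G}$, since minimizing sequences for \eqref{lambdaG} necessarily let mass escape to infinity (along the unbounded edges, or across infinitely many periodicity cells). I would record this non-attainment as a linear fact of the kind collected in Appendix~\ref{app:linear}; granting it, the case $b=0$ is excluded and the proof is complete. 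Everything except this non-attainment statement is elementary, resting only on the concavity of $\D_{q,V}$ and the explicit scaling $f_{v_0}(s)=\f{\la(\G)}{2}s+as-bs^{q/2}$.
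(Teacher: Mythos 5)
Your reduction to $g(\mu)=\D_{q,V}(\mu,\G)-\frac{\la(\G)}{2}\mu$, the finiteness of $\mu_q^*$, part $(b)$ via concavity of $\D_{q,V}$, the soft part of $(a)$, and the case $b>0$ are all correct; apart from replacing the paper's explicit rescaling inequality by the (equivalent) concavity of the ground state level, this is essentially the argument of the paper. The gap is in the case $b=0$, where $v_0\equiv 0$ on $V$ and $\|v_0'\|_{L^2(\G)}^2=\la(\G)\|v_0\|_{L^2(\G)}^2$. You dispose of it by asserting that the infimum in \eqref{lambdaG} is never attained on a noncompact $\G\in\mathbf{G}$ because ``minimizing sequences necessarily let mass escape to infinity''. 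This is not a valid fact for the whole class $\mathbf{G}$: the lemma is stated for arbitrary $\G\in\mathbf{G}$, not only for graphs with half-lines or periodic graphs, and $\mathbf{G}$ contains, for instance, infinite regular metric trees, whose Kirchhoff Laplacian has strictly positive bottom of the essential spectrum; attaching a sufficiently long compact pendant path produces a noncompact graph in $\mathbf{G}$ whose bottom of the spectrum is a discrete eigenvalue strictly below $\inf\sigma_{\mathrm{ess}}$, hence attained by a genuine $L^2$ eigenfunction. So the non-attainment statement you want to ``record as a linear fact'' is actually false in the generality required, and the case $b=0$ remains open in your argument.

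The repair does not need non-attainment at all, and is how the paper proceeds. If $\la(\G)=0$, then $a=0$ forces $\|v_0'\|_{L^2(\G)}=0$, so $v_0$ is a nonzero constant on the connected noncompact graph $\G$ (which has infinite total length), hence not in $L^2(\G)$ --- a contradiction. If $\la(\G)>0$ and $v_0$ attains the infimum in \eqref{lambdaG}, then $v_0$ is an eigenfunction of the Kirchhoff Laplacian associated with the first eigenvalue, and such eigenfunctions, \emph{when they exist}, vanish nowhere on $\G$ (replace $v_0$ by $|v_0|$, which is again a minimizer, and use the equation together with a strong maximum principle argument at the vertices); this contradicts $v_0\equiv 0$ on the nonempty set $V$. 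With this substitution for your case $b=0$, the rest of your proof stands.
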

\begin{proof}
We first prove that $\mu^*_q<+\infty$. To this end, take a vertex $\vv\in V$, that we identify in the following with $0$, and denote by $e_i$, $i=1,\dots,N$, the edges incident at $\vv$. Setting $\displaystyle \ell:=\min_{1\leq i\leq N}|e_i|$, we define a function $u_M\in H^1(\G)$ as 
\begin{equation*}
u_M(x)=
\begin{cases} M(\ell-x)\quad &\text{on}\quad e_i\cap[0,\ell],\quad \forall\,\, i=1,\dots,N\\
 0&\text{elsewhere}.
\end{cases}
\end{equation*}
Plainly, If $M\to+\infty$, we have $\|u_M\|_{L^2(\G)}^2\to +\infty$ and
\begin{equation*}
E(u_M)=\frac
12 N M^2\ell-\frac1q M^q\ell^q<0.
\end{equation*}
Hence, $\D(\mu)<0$ if $\mu$ is sufficiently large, entailing $\mu^*_q<+\infty$.

We now prove (b), i.e.  $\D(\mu)<\frac{\lambda(\G)}2\mu$ for $\mu>\mu^*_q$. Fix $\mu>\mu^*_q$ and observe that, by definition of $\mu^{*}_{q}$, there exists $\mu_{1}\in (\mu^{*}_{q},\mu)$ such that $\D(\mu_{1})<\frac{\lambda(\G)}{2}\mu_1$. In particular there exists $u_{1}\in H^{1}_{\mu_{1}}(\G)$ such that $E(u_{1})<\frac{\lambda(\G)}{2}\mu_1$. Since $\mu>\mu_1$,
\begin{equation}
\label{D-q-mu-rescaled}
\D(\mu)\leq E\left(\sqrt{\f{\mu}{\mu_{1}}}u_{1}\right)=\f{1}{2}\f{\mu}{\mu_{1}}\|u_{1}'\|_{L^{2}(\G)}^{2}-\f{1}{q}\left(\f{\mu}{\mu_{1}}\right)^{\f{q}{2}}\sum_{\vv\in V}|u_{1}(\vv)|^{q}\leq \f{\mu}{\mu_{1}}E(u_{1})<\frac{\lambda(\G)}{2}\mu,
\end{equation}
which proves (b).

Let us finally prove $(a)$. Clearly, if $\mu_q^*=0$ there is nothing to prove. Assume then $\mu^*_q>0$. Then by definition of $\mu^*_q$ it follows that $\D(\mu)\geq \frac{\lambda(\G)}2\mu$ for every $\mu \in (0,\mu^*_q)$, while $\D(\mu)\leq \frac{\la(\G)}2\mu$ by Lemma \ref{lem:level}. Hence, $\D(\mu)=\frac{\lambda(\G)}{2}\mu$ for every $\mu\in (0,\mu_q^*)$, and thus also at $\mu=\mu_q^*$ by Remark \ref{rem:contD}. Moreover, suppose by contradiction that there exists $\mu\in(0,\mu_q^*)$ and $u\in H^1_\mu(\G)$ such that $E(u)=\frac{\lambda(\G)}{2}\mu$.  We first note that $u\not\equiv0$ on $V$. Indeed, if this were not the case, then $u$ would satisfy $\|u'\|_{L^2(\G)}^2=\lambda(\G)\|u\|_{L^2(\G)}^2$. If $\lambda(\G) = 0$, this is impossible because $u \in H^1_\mu(\G)$ and $u \equiv 0$ on $V$ by assumption. If $\lambda(\G) >0$, $u$ would be an eigenfunction associated to the first eigenvalue $\lambda(\G)$ of the operator $-d^2/dx^2$ with homogeneous Kirchhoff conditions at every vertex of $\G$. Since such eigenfunctions (when they exist) do not vanish anywhere on $\G$, this would provide a contradiction. Fix now $\mu_1\in (\mu,\mu^*_q)$ and define $v:=\sqrt{\f{\mu_1}{\mu}}u\in H^1_{\mu_1}(\G)$. Arguing as in \eqref{D-q-mu-rescaled} and making use of $u\not\equiv0$ on $V$, one obtains that $\D(\mu_1)<\frac{\lambda(\G)}{2}\mu_1$, contradicting the fact that $\mu_1<\mu^*_q$ and concluding the proof. 
\end{proof}

As for the dependence of $\mu_q^*$ on the nonlinearity power $q$, we have the following general relation. 
\begin{lemma}
\label{lem:q<q1}
Let $\G\in\mathbf{G}$ and $V\subseteq\V_\G$. If $\mu^*_{\overline{q}}=0$ for some $\overline{q}>2$, then $\mu^*_q=0$ for every $q\in(2,\overline{q}]$.
\end{lemma}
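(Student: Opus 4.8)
The plan is to leverage the hypothesis $\mu^*_{\overline{q}}=0$, which by Lemma \ref{lem:mu-star} means $\D_{\overline{q},V}(\mu,\G)<\frac{\lambda(\G)}{2}\mu$ for \emph{every} $\mu>0$, hence produces, for arbitrarily small masses, competitors at which $E_{\overline{q},V}$ already beats the linear level. The idea is to transfer such a strict inequality from the power $\overline{q}$ to any smaller power $q$ by means of an elementary pointwise comparison, valid as soon as the competitor is uniformly small. The case $q=\overline{q}$ is trivial, so fix $q\in(2,\overline{q})$. The comparison I would use is this: for $2<q\le\overline{q}$ and any $a\in\R$ with $|a|\le1$ one has $\frac1q|a|^q\ge\frac1{\overline{q}}|a|^{\overline{q}}\ge0$, since both $\frac1q\ge\frac1{\overline{q}}$ and $|a|^q\ge|a|^{\overline{q}}$. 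Consequently, for every $u\in H^1(\G)$ with $\|u\|_{L^\infty(\G)}\le1$ (so that $|u(\vv)|\le1$ for all $\vv$) one gets $-\frac1q\sum_{\vv\in V}|u(\vv)|^q\le-\frac1{\overline{q}}\sum_{\vv\in V}|u(\vv)|^{\overline{q}}$, and therefore $E_{q,V}(u,\G)\le E_{\overline{q},V}(u,\G)$. Thus, if I can produce for arbitrarily small masses $\mu$ a competitor $u\in H^1_\mu(\G)$ with $\|u\|_{L^\infty(\G)}\le1$ \emph{and} $E_{\overline{q},V}(u,\G)<\frac{\lambda(\G)}{2}\mu$, then $\D_{q,V}(\mu,\G)\le E_{q,V}(u,\G)\le E_{\overline{q},V}(u,\G)<\frac{\lambda(\G)}{2}\mu$ for such masses, and $\mu^*_q=0$ follows at once from Definition \ref{def:mu-star-q}.

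The existence of competitors with $E_{\overline{q},V}(u,\G)<\frac{\lambda(\G)}{2}\mu$ is immediate from $\mu^*_{\overline{q}}=0$; the heart of the matter, and the step I expect to be the only real obstacle, is to guarantee the uniform smallness $\|u\|_{L^\infty(\G)}\le1$ once the mass is small enough. I would argue by contradiction. Suppose the bound fails, so that there are masses $\mu_n\to0$ and functions $u_n\in H^1_{\mu_n}(\G)$ with $E_{\overline{q},V}(u_n,\G)<\frac{\lambda(\G)}{2}\mu_n$ but $\|u_n\|_{L^\infty(\G)}>1$. By \eqref{GN-inf} we have $1<\|u_n\|_{L^\infty(\G)}^2\le C_\infty\mu_n^{1/2}\|u_n'\|_{L^2(\G)}$, whence $\|u_n'\|_{L^2(\G)}\to+\infty$. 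On the other hand, arguing exactly as in the proof of Lemma \ref{lem:level} (via Lemma \ref{lem:mod-ineq-GN} and \eqref{GN-1d}, recalling that $\sum_{\vv\in V}\le\sum_{\vv\in\V_\G}$), there are constants $c_1,c_2>0$ depending only on $\G$ and $\overline{q}$ such that
\[
\frac12\|u_n'\|_{L^2(\G)}^2<\frac{\lambda(\G)}{2}\mu_n+c_1\mu_n^{\frac{\overline{q}}{4}+\frac12}\|u_n'\|_{L^2(\G)}^{\frac{\overline{q}}{2}-1}+c_2\mu_n^{\frac{\overline{q}}{4}}\|u_n'\|_{L^2(\G)}^{\frac{\overline{q}}{2}}.
\]
Dividing by $\|u_n'\|_{L^2(\G)}^2$ and using $\overline{q}<4$ (so that the three exponents $0$, $\frac{\overline{q}}{2}-1$, $\frac{\overline{q}}{2}$ on the right are all strictly below $2$), together with $\mu_n\to0$ and $\|u_n'\|_{L^2(\G)}\to+\infty$, each term on the right tends to $0$, contradicting the lower bound $\frac12$ on the left. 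Hence for all sufficiently small $\mu$ every competitor achieving $E_{\overline{q},V}<\frac{\lambda(\G)}{2}\mu$ automatically satisfies $\|u\|_{L^\infty(\G)}\le1$.

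Assembling the two ingredients, for every sufficiently small $\mu>0$ the hypothesis furnishes $u\in H^1_\mu(\G)$ with $\|u\|_{L^\infty(\G)}\le1$ and $E_{\overline{q},V}(u,\G)<\frac{\lambda(\G)}{2}\mu$, the pointwise comparison gives $E_{q,V}(u,\G)\le E_{\overline{q},V}(u,\G)<\frac{\lambda(\G)}{2}\mu$, and thus $\D_{q,V}(\mu,\G)<\frac{\lambda(\G)}{2}\mu$ along a sequence of masses tending to $0$. Since $\mu^*_q$ is by Definition \ref{def:mu-star-q} the infimum of the masses at which $\D_{q,V}$ drops strictly below $\frac{\lambda(\G)}{2}\mu$, this forces $\mu^*_q=0$, as desired. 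I emphasize that the whole argument is scaling-free: the transfer uses only the monotonicity of $t\mapsto\frac1q t^q$ in the range $t\le1$, and the single delicate point is the uniform $L^\infty$ decay established above, which is precisely the rigorous counterpart of the heuristic that low-energy functions of small mass are uniformly small.
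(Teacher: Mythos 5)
Your proof is correct and follows essentially the same route as the paper: both arguments extract near-optimal competitors at small mass, use Lemma \ref{lem:mod-ineq-GN} together with \eqref{GN-1d} and \eqref{GN-inf} to show these competitors are uniformly small in $L^\infty$, and then transfer the energy inequality from $\overline{q}$ to $q$ via the pointwise comparison $\frac1q|a|^q\ge\frac1{\overline{q}}|a|^{\overline{q}}$ for $|a|\le1$. The only cosmetic difference is that you establish the $L^\infty$ bound by contradiction, whereas the paper directly deduces boundedness of $\|u_\mu'\|_{L^2(\G)}$ and then lets $\mu\to0$ in \eqref{GN-inf}.
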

\begin{proof}
Fix $\overline{q}>2$ and suppose that $\mu^*_{\overline{q}}=0$. By Lemma \ref{lem:mu-star}, $\D_{\overline{q}}(\mu)<\frac{\lambda(\G)}{2}\mu$ for every $\mu>0$, so that there exist $(u_{\mu})_{\mu>0}$ such that $u_{\mu}\in H^{1}_{\mu}(\G)$ and $E_{\overline{q}}(u_{\mu})<\frac{\lambda(\G)}{2}\mu$ for every $\mu>0$.  Note that, as in the final part of the proof of Lemma \ref{lem:mu-star}, $u_\mu\not\equiv0$ on $V$.
By Lemma \ref{lem:mod-ineq-GN} and \eqref{GN-1d}, this yields
\[
\frac12\|u_\mu'\|_{L^2(\G)}^2<\frac{\lambda(\G)}2\mu+CC_q\mu^{\frac{\overline{q}}{4}+\frac12}\|u_\mu'\|_{L^2(\G)}^{\frac{\overline{q}}{2}-1}+C\mu^{\frac{\overline{q}}{4}}\|u_\mu'\|_{L^2(\G)}^{\frac{\overline{q}}{2}}\,,
\]
which shows that $(u_\mu)_{\mu>0}$ is bounded in $H^1(\G)$ when $\mu$ is sufficiently small, and thus by \eqref{GN-inf} it follows that $\|u_{\mu}\|_{L^{\infty}(\G)}\to 0$ as $\mu\to 0$. Hence, $|u_{\mu}(\vv)|\leq 1$ uniformly on $\vv\in V$ as $\mu \to 0$. Therefore, if $q\in (2,\overline{q}]$, then 
\begin{equation*}
\f{1}{q}\sum_{\vv\in V}|u_{\mu}(\vv)|^{q}=\f{1}{\overline{q}}\sum_{\vv\in V}\left(\f{\overline{q}}{q}|u_{\mu}(\vv)|^{q-\overline{q}}\right)|u_{\mu}(\vv)|^{\overline{q}}\geq \f{1}{\overline{q}}\sum_{\vv\in V}|u_{\mu}(\vv)|^{\overline{q}},
\end{equation*}
which entails $\D_q(\mu)\leq E_{q}(u_{\mu})\leq E_{\overline{q}}(u_{\mu})<\frac{\lambda(\G)}{2}\mu$ for every $\mu>0$, i.e. $\mu^*_q=0$.
\end{proof}
The previous lemma suggests the following definition.
\begin{definition}
\label{q-star}
Let $\G\in\mathbf{G}$ and $V\subseteq\V_\G$. Set
\begin{equation*}
q^{*}:=\inf\left\{q\in(2,4)\,:\,\mu^{*}_{q}>0\right\}\,,
\end{equation*}
where, for every $q\in(2,4)$, $\mu^{*}_{q}$ is the number introduced in Definition \ref{def:mu-star-q}.
\end{definition}
Exploiting $q^*$ and $\mu_q^*$ it is then possible to provide a general description of the behaviour of $\D_{q,V}$ in terms of both $q$ and $\mu$.
\begin{lemma}
\label{lem:q-star}
Let $\G\in \mathbf{G}$ and $V\subseteq\V_\G$. 
\begin{itemize}
	\item[$(i)$] If $q^{*}=2$, then $\mu^{*}_{q}>0$ for every $q\in(2,4)$;
	\item[$(ii)$] if $q^{*}=4$, then $\mu^{*}_{q}=0$ for every $q\in(2,4)$;
	\item[$(iii)$] if $q^{*}\in(2,4)$, then $\mu^{*}_q=0$ for every  $q\in (2,q^{*})$ and $\mu^{*}_q>0$ for every $q\in (q^{*},4)$.
\end{itemize}
\end{lemma}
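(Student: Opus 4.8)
The plan is to reduce the entire statement to a single monotonicity property already encoded in Lemma \ref{lem:q<q1}, combined with the elementary properties of the infimum defining $q^*$. The only structural input I would use is the contrapositive of Lemma \ref{lem:q<q1}: if $\mu^*_{q_1}>0$ for some $q_1\in(2,4)$ and $q\in(q_1,4)$, then $\mu^*_q>0$. Equivalently, the set $B:=\{q\in(2,4):\mu^*_q>0\}$ is upward closed in $(2,4)$, and correspondingly $\{q\in(2,4):\mu^*_q=0\}$ is downward closed. No analytic estimate beyond what is already available is needed.

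From here I would establish two general facts, valid for every $\G$ and $V$. First, for every $q\in(q^*,4)$ one has $\mu^*_q>0$: since $q^*=\inf B<q$, the number $q$ fails to be a lower bound for $B$, so there exists $q_1\in B$ with $q_1<q$; as $q_1\in(2,q]$ and $\mu^*_{q_1}>0$, the monotonicity above yields $\mu^*_q>0$. Second, for every $q\in(2,q^*)$ one has $\mu^*_q=0$: indeed $q^*$ is by definition a lower bound for $B$, hence no $q<q^*$ belongs to $B$, which means exactly $\mu^*_q=0$. These two facts already give the complete description of $\mu^*_q$ on $(2,q^*)\cup(q^*,4)$, leaving only the behaviour exactly at $q=q^*$ unspecified (which is consistent with the statement, as it asserts nothing there).

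The three cases then follow immediately by substituting the value of $q^*$. If $q^*=2$, then $(q^*,4)=(2,4)$ and the first fact gives $\mu^*_q>0$ for all $q\in(2,4)$, proving $(i)$. If $q^*\in(2,4)$, the two facts give precisely the two assertions of $(iii)$. For $(ii)$, when $q^*=4$, the cleanest route is a direct contradiction argument that also settles any convention issue about $\inf\emptyset$: if $\mu^*_{q_0}>0$ for some $q_0\in(2,4)$, then $q_0\in B$ and hence $q^*=\inf B\le q_0<4$, contradicting $q^*=4$; therefore $\mu^*_q=0$ for every $q\in(2,4)$.

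I do not expect any genuine obstacle: the statement is a purely order-theoretic consequence of the monotonicity in $q$ from Lemma \ref{lem:q<q1} and the definition of the infimum. The only points deserving a little care are the endpoint bookkeeping — in particular fixing the meaning of $q^*$ when $B$ is empty, which the argument for $(ii)$ handles without appealing to any convention — and the observation that the statement intentionally says nothing about $q=q^*$, so no separate analysis is required at that single value.
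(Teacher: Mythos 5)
Your proof is correct and follows essentially the same route as the paper: both arguments rest solely on Lemma \ref{lem:q<q1} (used in contrapositive form, i.e.\ the upward-closedness of $\{q:\mu^*_q>0\}$) together with the defining property of the infimum $q^*$; the paper phrases the key step for $q\in(q^*,4)$ as a contradiction, while you state it directly, but the logical content is identical. Your explicit handling of the case $q^*=4$ without invoking a convention for $\inf\emptyset$ is a minor, harmless refinement of what the paper dismisses as obvious.
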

\begin{proof}
Points (i) and (ii) are obvious in view of Definition \ref{q-star}, as well as  $\mu^{*}_{q}=0$ if $q\in(2,q^{*})$ whenever $q^*\in(2,4)$. To conclude, let us show that $\mu^{*}_{q}>0$ if $q\in (q^{*},4)$ when $q^*<4$. Suppose by contradiction that there exists $q_{1}\in (q^{*},4)$ such that $\mu^{*}_{q_{1}}=0$. By Lemma \ref{lem:q<q1}, it would then follow that $\mu^{*}_{q}=0$ for every $q\in (2,q_{1}]$, that is $q^{*}\geq q_{1}$ by definition of $q^*$, providing the contradiction we seek.
\end{proof}

\begin{remark}
	It is easy to exhibit examples of $\G$ and $V$ for which $q^*\in(2,4)$ and $\mu_{q^*}^*>0$ (see e.g. Theorem \ref{thm:grid-Z-per}). However, in general we are not able to say whether this is always the case or $\mu_{q^*}^*=0$ for suitable choices of $\G$ and $V$ for which $q^*\in(2,4)$.
\end{remark}

Up to now, all the results of this section refer to properties of the ground state level $\D_{q,V}$. It is however evident that nothing we said so far is enough to ensure the existence of ground states with prescribed mass, as this requires 
compactness properties that depend on $\G$ and $V$. We conclude this section with a first general result in this direction, providing an existence criterion for ground states in the case $V$ is finite.
\begin{lemma}
	\label{lem:compact-crit}
	Let $\G\in\mathbf{G}$ and $V\subseteq\V_\G$ be such that $\#V<+\infty$. If, for some $q\in(2,4)$ and $\mu>0$,
	\begin{equation}
	\label{strict-below-linear}
	\D_{q,V}(\mu,\G)<\frac{\lambda(\G)}2\mu\,,
	\end{equation}
	then ground states of $E_{q,V}$ in $H_\mu^1(\G)$ exist.
\end{lemma}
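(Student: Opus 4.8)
The plan is to use the direct method of the calculus of variations: I take a minimizing sequence, extract a weak limit $u$, and show that no mass is lost, the strict inequality \eqref{strict-below-linear} being precisely what prevents mass from escaping to infinity. First I fix a minimizing sequence $(u_n)\subset H_\mu^1(\G)$ for $\D_{q,V}(\mu,\G)$. Combining Lemma \ref{lem:mod-ineq-GN} with \eqref{GN-1d} (for $p=q$) exactly as in the proof of Lemma \ref{lem:level}, and using $q<4$, I bound $E_{q,V}(u_n,\G)$ from below by $\tfrac12\|u_n'\|_{L^2(\G)}^2$ minus sublinear powers of $\|u_n'\|_{L^2(\G)}$; since $\|u_n\|_{L^2(\G)}^2=\mu$ is fixed, this forces $(u_n)$ to be bounded in $H^1(\G)$. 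Up to subsequences $u_n\deb u$ in $H^1(\G)$, and I set $m:=\|u\|_{L^2(\G)}^2\le\mu$.

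The decisive use of $\#V<+\infty$ comes next: each $\vv\in V$ has finite degree, so a bounded star-shaped neighbourhood of $\vv$ is a compact subgraph on which the embedding $H^1\imm C$ is compact; hence $u_n\to u$ uniformly there and $u_n(\vv)\to u(\vv)$ for every $\vv\in V$, whence $\sum_{\vv\in V}|u_n(\vv)|^q\to\sum_{\vv\in V}|u(\vv)|^q$ (finiteness of the sum being essential here). Together with the Hilbert-space identities $\|u_n'\|_{L^2(\G)}^2=\|u'\|_{L^2(\G)}^2+\|u_n'-u'\|_{L^2(\G)}^2+o(1)$ and $\|u_n-u\|_{L^2(\G)}^2=\mu-m+o(1)$, this yields the splitting
\[
\D_{q,V}(\mu,\G)=E_{q,V}(u,\G)+\lim_n\tfrac12\|u_n'-u'\|_{L^2(\G)}^2 ,
\]
and, since $\|u_n'-u'\|_{L^2(\G)}^2\ge\lambda(\G)\|u_n-u\|_{L^2(\G)}^2\to\lambda(\G)(\mu-m)$ by \eqref{lambdaG},
\[
\D_{q,V}(\mu,\G)\ge E_{q,V}(u,\G)+\tfrac{\lambda(\G)}2(\mu-m).
\]

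It then remains to prove $m=\mu$, which is the heart of the matter. If $m=0$ then $u\equiv0$ and the last display reads $\D_{q,V}(\mu,\G)\ge\tfrac{\lambda(\G)}2\mu$, contradicting \eqref{strict-below-linear}; hence $m>0$. The genuine obstacle is the dichotomy case $0<m<\mu$, i.e. the loss of compactness produced by a nontrivial amount of mass drifting to infinity. Here I would exploit concavity. On the one hand, $E_{q,V}(u,\G)\ge\D_{q,V}(m,\G)$ gives $\D_{q,V}(\mu,\G)\ge\D_{q,V}(m,\G)+\tfrac{\lambda(\G)}2(\mu-m)$. On the other hand, setting $g(\sigma):=\D_{q,V}(\sigma,\G)-\tfrac{\lambda(\G)}2\sigma$, the function $g$ is concave by Remark \ref{rem:contD}, satisfies $g(0)=0$ and $g(\mu)<0$ by \eqref{strict-below-linear}; writing $m=\tfrac m\mu\,\mu+(1-\tfrac m\mu)\,0$ and using concavity yields $g(m)\ge\tfrac m\mu\,g(\mu)>g(\mu)$, that is $\D_{q,V}(\mu,\G)<\D_{q,V}(m,\G)+\tfrac{\lambda(\G)}2(\mu-m)$, a contradiction. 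Therefore $m=\mu$, so $u\in H_\mu^1(\G)$; the splitting then forces both $\lim_n\|u_n'-u'\|_{L^2(\G)}=0$ and $E_{q,V}(u,\G)=\D_{q,V}(\mu,\G)$, so $u$ is the desired ground state.

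I expect the main difficulty to be exactly the exclusion of this dichotomy scenario; the clean resolution above hinges on the concavity of the ground state level recorded in Remark \ref{rem:contD}, which turns the strict inequality \eqref{strict-below-linear} into the strict subadditivity $g(m)>g(\mu)$ needed to defeat the escaping mass. The remaining ingredients — boundedness of the minimizing sequence and pointwise convergence at the finitely many defect vertices — are routine, the latter being where the assumption $\#V<+\infty$ enters in an essential way.
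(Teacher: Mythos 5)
Your proof is correct, and while its skeleton (direct method, $H^1$-boundedness, pointwise convergence at the finitely many defect vertices, exclusion of vanishing and of dichotomy) matches the paper's, the way you defeat the dichotomy $0<m<\mu$ is genuinely different. The paper uses the splitting $E(u_n)=E(u_n-u)+E(u)+o(1)$ together with two mass-rescaling inequalities, $E(u)>\frac m\mu\D(\mu)$ and $\liminf_nE(u_n-u)\ge\frac{\mu-m}{\mu}\D(\mu)$, to reach the contradiction $\D(\mu)>\D(\mu)$; the first of these is strict only when $u\not\equiv0$ on $V$, which forces the paper to dispose of the case $u\equiv0$ on $V$ separately beforehand. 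You instead retain only the kinetic energy of the escaping part, bound it from below by $\frac{\lambda(\G)}2(\mu-m)$ by applying the definition \eqref{lambdaG} to $u_n-u$, and then invoke the concavity of $\mu\mapsto\D_{q,V}(\mu,\G)$ from Remark \ref{rem:contD} (which the paper records but does not use in this proof) to turn \eqref{strict-below-linear} into the strict inequality $g(m)>g(\mu)$ for $g(\sigma):=\D(\sigma)-\frac{\lambda(\G)}2\sigma$, contradicting $g(\mu)\ge g(m)$. This buys a uniform treatment of all $0<m<\mu$ with no case distinction on whether $u$ vanishes on $V$ (only $m=0$ needs the separate, one-line argument), and it isolates cleanly where hypothesis \eqref{strict-below-linear} enters. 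The paper's rescaling route, on the other hand, works entirely at the level of the minimizing sequence and is the template reused almost verbatim in Lemmas \ref{lem:exZper} and \ref{lem:compact-crit-grid-inf}; both arguments ultimately exploit the same superlinearity $q>2$, once through the concavity of $f_v(\mu)$ and once through the strict rescaling inequalities, so the two proofs are close relatives rather than strangers.
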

\begin{proof}
	Let $\mu>0$ be such that \eqref{strict-below-linear} holds and consider a minimizing sequence $(u_{n})_{n}\subset H^{1}_{\mu}(\G)$ for $E$, i.e. $E(u_{n})\to\D(\mu)$ as $n\to+\infty$. By \eqref{strict-below-linear}, \eqref{GN-inf} and the fact that $d:=\#V$ is finite we have, for sufficiently large $n$,
	\begin{equation*}
	\f{\la(\G)}{2}\mu>E(u_{n})\geq \f{1}{2}\|u_{n}'\|_{L^{2}(\G)}^{2}-\f{2d\mu^{\f{q}{4}}}{q}\|u_{n}'\|_{L^{2}(\G)}^{\f{q}{2}},
	\end{equation*}
	i.e. $(u_{n})_{n}$ is bounded in $H^{1}(\G)$ since $q<4$. Thus, up to subsequences, $u_{n}\deb u$ weakly in $H^{1}(\G)$ and $u_{n}\to u$ strongly in $L^{\infty}_{\text{loc}}(\G)$. In particular, since $V$ has finite cardinality, $u_{n}(\vv)\to u(\vv)$ for every $\vv\in V$. Hence, by weak lower semicontinuity
	\begin{equation*}
	E(u)\leq \liminf_{n\to+\infty} E(u_{n})=\D(\mu)
	\end{equation*}
	and 
	\begin{equation*}
	m:=\|u\|_{L^{2}(\G)}^{2}\leq \liminf_{n\to+\infty} \|u_{n}\|_{L^{2}(\G)}^{2}=\mu\,.
	\end{equation*}
	Clearly, if we prove that $m=\mu$, then $u\in H_\mu^1(\G)$ and $\D(\mu)=E(u)$, that is $u$ is the required ground state. Let us thus show this arguing by contradiction.
	
	Suppose first that $u \equiv 0$ on $V$. Then $u_n(\vv)\to0$ as $n\to+\infty$ for every $\vv\in V$ and, since $V$ has finite cardinality, this implies
	\begin{equation*}
	\D(\mu)=\lim_{n\to+\infty} E(u_{n})=\f{1}{2}\lim_{n\to+\infty}\|u_{n}'\|_{L^{2}(\G)}^{2}\geq \f{\la(\G)}{2}\mu,
	\end{equation*}
	contradicting \eqref{strict-below-linear}. In particular, this rules out the case $m=0$. 
	
	Suppose then that $0<m<\mu$ and $u \not\equiv 0$ on $V$. Since $u_{n}\deb u$ weakly in $H^{1}(\G)$, as $n\to+\infty$ it follows that
	\begin{equation*}
	\begin{split}
	&\|u_{n}-u\|_{L^{2}(\G)}^{2}=\mu-m+o(1)\\
	&\|u_{n}'-u'\|_{L^{2}(\G)}^{2}=\|u_{n}'\|_{L^{2}(\G)}^{2}-\|u'\|_{L^{2}(\G)}^{2}+o(1)\\
	&|u_{n}(\vv)-u(\vv)|^{q}=|u_{n}(\vv)|^{q}-|u(\vv)|^{q}+o(1)\quad\text{uniformly on } \vv\in V\,,
	\end{split}
	\end{equation*}
	so that
	\begin{equation}
	\label{Dq-un-u}
	E(u_{n}-u)=E(u_{n})-E(u)+o(1)\,.
	\end{equation}
	On the one hand, since $\mu>m$ and $q>2$,
	\begin{equation*}
	\D(\mu)\leq E\left(\sqrt{\f{\mu}{m}}u\right)=\f{1}{2}\f{\mu}{m}\|u'\|_{L^{2}(\G)}^{2}-\f{1}{q}\left(\f{\mu}{m}\right)^{\f{q}{2}}\sum_{\vv\in V}|u(\vv)|^{q}<\f{\mu}{m}E(u),
	\end{equation*}
	entailing
	\begin{equation}
	\label{Dq-u->}
	E(u)>\f{m}{\mu}\D(\mu).
	\end{equation}
	On the other hand, since for large $n$, $\|u_n-u\|_{L^2(\G)}^2 < \mu$, and $q>2$,
	\begin{equation*}
	\D(\mu)\leq  E\left(\frac{\sqrt{\mu}}{{\|u_n-u\|_{L^2(\G)}}}\,(u_n-u)\right)<\f{\mu}{\|u_n-u\|_{L^2(\G)}^2}E(u_{n}-u),
	\end{equation*}
	from which it follows
	\begin{equation}
	\label{Dq-un-u-geq}
	\liminf_{n\to+\infty}E(u_{n}-u)\geq \f{\mu-m}{\mu}\D(\mu).
	\end{equation}
	Therefore, combining \eqref{Dq-un-u}, \eqref{Dq-u->} and \eqref{Dq-un-u-geq} we obtain
	\begin{equation*}
	\D(\mu)=\lim_{n\to+\infty}E(u_{n})\geq\liminf_{n\to+\infty}E(u_{n}-u)+E(u)>\f{\mu-m}{\mu}\D(\mu)+\f{m}{\mu}\D(\mu)=\D(\mu),
	\end{equation*}
	which is a contradiction. Hence, $m=\mu$ and the proof is complete.
\end{proof}

\section{Graphs with half--lines and $\Z$-periodic graphs: proof of Theorems \ref{thm:n-cpt-hl}--\ref{thm:Z-per}}
\label{sec:H+Zper}

This section is devoted to the proof of our main results concerning noncompact graphs with finitely many vertices and $\Z$-periodic graphs, namely Theorem \ref{thm:n-cpt-hl} and Theorem \ref{thm:Z-per} respectively.

Recall first that, if $\G$ is any of these graphs, then (see e.g. Lemma \ref{lem:bot-spec-0} below)
\begin{equation}
\label{eq:l=0}
\lambda(\G)=0\,.
\end{equation}
\begin{proof}[Proof of Theorem \ref{thm:n-cpt-hl}]
Since $\G$ contains finitely many vertices, by \eqref{eq:l=0} and Lemma \ref{lem:compact-crit}, to prove existence of ground states of $E_{q,V}$ in $H_\mu^1(\G)$ it is enough to show that $\D_{q,V}(\mu,\G)<0$. Moreover, to show that this is true for every $\mu>0$, by Lemma \ref{lem:mu-star} it is enough to show that $\mu_q^*=0$, that amounts to constructing a sequence of functions $(u_\mu)_{\mu>0}$ such that $u_\mu\in H_\mu^1(\G)$ and $E(u_\mu)<0$ as $\mu\to0^+$.

To this end, since $\G$ is a noncompact graph with finitely many edges, we can write $\G=\K\cup\bigcup_{i=1}^{N}\HH_{i}$, where $\K$ is the compact core of $\G$ and $\HH_i$, $i=1,\dots,N$, are the half--lines of $\G$. For every $\varepsilon>0$, let then $u_{\ep}\in H^{1}_{\mu}(\G)$ be defined as 
\begin{equation*}
u_{\ep}(x)=
    \begin{cases}
      \ep^{2}e^{-\ep^{q}x}\quad&\text{if }x\in\HH_{i},\text{ for some } i=1,\dots,N\\
      \ep^{2}\quad &\text{if }x\in\K\,.
    \end{cases}
\end{equation*}
Direct computations yield
\begin{equation*}
\mu=\left(\f{N}{2}+|\K|\ep^{q}\right)\ep^{4-q}
\end{equation*}
and
\begin{equation*}
E(u_{\ep})=\f{N}{4}\ep^{q+4}-\f{d}{q}\ep^{2q}\,,
\end{equation*}
with $d:=\#V$.
Since $\ep\to 0$ if and only if $\mu\to 0$ and $E(u_{\ep})<0$ if $\ep$ is sufficiently close to $0$ by $q<4$, we conclude.
\end{proof}

We now focus on $\Z$-periodic graphs. Before proving Theorem \ref{thm:Z-per}, we give the next existence criterion for ground states in the case of sets $V$ that are $\Z$-periodic in $\V_\G$ according to the definition given in Section \ref{sec:prel}.
\begin{lemma}
	\label{lem:exZper}
	Let $\G\in\mathbf{G}$ be a $\Z$-periodic graph and $V\subseteq\V_\G$ be a $\Z$-periodic set. If, for some $q\in(2,4)$ and $\mu>0$, there results $\displaystyle\D_{q,V}(\mu,\G)<0$,
	then ground states of $E_{q,V}$ in $H_\mu^1(\G)$ exist.
\end{lemma}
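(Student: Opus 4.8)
The plan is to run a concentration--compactness argument adapted to the $\Z$-periodic setting, the new feature with respect to Lemma \ref{lem:compact-crit} being that, since $V$ is now infinite, the convergence $u_n(\vv)\to u(\vv)$ at the defects is only pointwise and compactness may be lost along the periodicity direction. First I would take a minimizing sequence $(u_n)\subset H^1_\mu(\G)$ for $E_{q,V}$ and show it is bounded in $H^1(\G)$: since $\la(\G)=0$ by \eqref{eq:l=0}, the hypothesis $\D_{q,V}(\mu,\G)<0$ together with Lemma \ref{lem:mod-ineq-GN} and \eqref{GN-1d} yields, exactly as in Lemma \ref{lem:compact-crit}, a bound on $\|u_n'\|_{L^2(\G)}$ using $q<4$.

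The heart of the proof is a non-vanishing step. From $E(u_n)\to\D_{q,V}(\mu,\G)<0$ and $\frac12\|u_n'\|_{L^2(\G)}^2\geq0$ one gets $\sum_{\vv\in V}|u_n(\vv)|^q\geq c>0$ for large $n$. Interpolating,
\[
c\leq\sum_{\vv\in V}|u_n(\vv)|^q\leq\Big(\sup_{\vv\in V}|u_n(\vv)|\Big)^{q-2}\sum_{\vv\in V}|u_n(\vv)|^2,
\]
and since the trace bound $\sum_{\vv\in\V_\G}|u(\vv)|^2\leq C\|u\|_{H^1(\G)}^2$ holds (it is the $q=2$ instance of the estimate in the proof of Lemma \ref{lem:mod-ineq-GN}), the boundedness of $(u_n)$ forces $\sup_{\vv\in V}|u_n(\vv)|\geq\delta>0$. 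Hence there are vertices $\vv_n\in V$ with $|u_n(\vv_n)|\geq\delta$. Now I would exploit the $\Z$-periodicity of both $\G$ and $V$: writing $V=\bigcup_{i\in\Z}\{\vv_1^i,\dots,\vv_m^i\}$, each $\vv_n$ is the copy of some $\vv_{j_n}$ in some cell $\K_{i_n}$; translating $u_n$ by $-i_n$ periods through the graph automorphism that defines the periodicity produces $\tilde u_n$ with the same mass and the same energy (here the periodicity of $V$ is essential, so that the defect sum is invariant) and with $|\tilde u_n(\vv_{j_n}^0)|\geq\delta$. A pigeonhole argument on $j_n\in\{1,\dots,m\}$ then fixes a single vertex $\overline{\vv}$ of the fundamental cell with $|\tilde u_n(\overline{\vv})|\geq\delta$ along a subsequence.

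From here the scheme parallels Lemma \ref{lem:compact-crit}. Up to subsequences $\tilde u_n\deb u$ in $H^1(\G)$ and $\tilde u_n\to u$ in $L^\infty_{\mathrm{loc}}(\G)$, so $|u(\overline{\vv})|\geq\delta$ and $u\not\equiv0$; set $m:=\|u\|_{L^2(\G)}^2\in(0,\mu]$. A Brezis--Lieb decomposition gives $E(\tilde u_n)=E(u)+E(\tilde u_n-u)+o(1)$, where the splitting of the defect term $\sum_{\vv\in V}|\tilde u_n(\vv)|^q=\sum_{\vv\in V}|u(\vv)|^q+\sum_{\vv\in V}|\tilde u_n(\vv)-u(\vv)|^q+o(1)$ is the Brezis--Lieb lemma for the counting measure on $V$, justified by the uniform $\ell^q$-bound on $(\tilde u_n|_V)$ and pointwise convergence. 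Assuming by contradiction $m<\mu$, the two scaling estimates of Lemma \ref{lem:compact-crit}---rescaling $u$ to mass $\mu$ (using $u\not\equiv0$ on $V$) and rescaling $\tilde u_n-u$, whose mass tends to $\mu-m<\mu$---yield $E(u)>\frac m\mu\D(\mu)$ and $\liminf E(\tilde u_n-u)\geq\frac{\mu-m}\mu\D(\mu)$, whose sum contradicts $\D(\mu)=\lim E(\tilde u_n)$. Therefore $m=\mu$; then $\|\tilde u_n-u\|_{L^2(\G)}\to0$ forces $\sum_{\vv\in V}|\tilde u_n(\vv)-u(\vv)|^q\to0$ (via Lemma \ref{lem:mod-ineq-GN} and \eqref{GN-1d}), so $\liminf E(\tilde u_n-u)\geq0$ and $\D(\mu)\geq E(u)\geq\D(\mu)$, i.e. $u\in H^1_\mu(\G)$ is a ground state.

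The main obstacle is precisely the non-vanishing step and its interplay with periodicity: one must convert the lower bound on the infinite defect sum into a pointwise lower bound at a single vertex, and then re-center that vertex into the fundamental cell by a periodic translation, which is the mechanism recovering the compactness lost at infinity. The Brezis--Lieb splitting of the infinite vertex sum is a technical, but standard, point once the uniform bounds are in place.
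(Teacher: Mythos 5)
Your proposal is correct and follows essentially the same strategy as the paper's proof: use the joint periodicity of $\G$ and $V$ to translate the minimizing sequence so that concentration occurs in a fixed compact region, rule out vanishing by showing that uniform smallness at the defects would force $\lim E(u_n)\ge \frac12\liminf\|u_n'\|_{L^2(\G)}^2\ge 0$, contradicting $\D_{q,V}(\mu,\G)<0$, and exclude dichotomy via the Brezis--Lieb splitting combined with the strict scaling inequalities already used in Lemma \ref{lem:compact-crit}. The only cosmetic difference is the order of operations in the non-vanishing step: the paper normalizes each $u_n$ upfront so that its $L^\infty$ norm is attained on a fixed compact set, whereas you first extract the lower bound $\sup_{\vv\in V}|u_n(\vv)|\ge\delta$ from the negativity of the energy and then re-center by a periodic translation; the two are equivalent.
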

\begin{proof}
The argument of the proof is very similar to that of Lemma \ref{lem:compact-crit}. Let $\D(\mu)<0$ and $(u_n)_n\subset H_\mu^1(\G)$ be such that $E(u_n)\to \D(\mu)$ as $n\to+\infty$. Furthermore, exploiting the periodicity of both $\G$ and $V$, there is no loss of generality in assuming that each $u_n$ attains its $L^\infty$ norm on the same compact set $K\subset\G$ (independent of $n$). By Lemma \ref{lem:mod-ineq-GN}, \eqref{GN-1d} and $q<4$ we have that, up to subsequences, $u_n\rightharpoonup u$ weakly in $H^1(\G)$ as $n\to+\infty$. As in the proof of Lemma \ref{lem:compact-crit}, if $\|u\|_{L^2(\G)}^2=\mu$, we conclude. 

To show that $\|u\|_{L^2(\G)}\neq0$ note that, since $\|u_n\|_{L^\infty(\G)}=\|u_n\|_{L^\infty(K)}$ for every $n$ and the convergence of $u_n$ to $u$ is uniform on compact sets, if it were $u\equiv0$ we would have $u_n\to0$ in $L^\infty(\G)$. By Sobolev embeddings it would then follow, for a suitable constant $C>0$ independent of $n$,
\[
\sum_{\vv\in V}|u_n(\vv)|^q\leq\|u_n\|_{L^\infty(\G)}^{q-2}\sum_{\vv\in V}|u_n(\vv)|^2\leq C\|u_n\|_{L^\infty(\G)}^{q-2}\|u_n\|_{H^1(\G)}^2\to0\qquad\text{as }n\to+\infty\,,
\]
which by weak lower semicontinuity would yield
\[
\D(\mu)=\lim_{n\to+\infty}E(u_n)\geq\frac12\lim_{n\to+\infty}\|u_n'\|_{L^2(\G)}^2\geq0\,,
\]
contradicting $\D(\mu)<0$.

Finally, to exclude that $\|u\|_{L^2(\G)}^2\in(0,\mu)$ one argues exactly as in the final part of the proof of Lemma \ref{lem:compact-crit}, just recalling \eqref{eq:l=0} and noting that here the splitting
\[
E(u_n)=E(u_n-u)+E(u)+o(1)\qquad\text{as }n\to+\infty
\]
holds by the Brezis-Lieb Lemma \cite{BL}. 
\end{proof}

\begin{proof}[Proof of Theorem \ref{thm:Z-per}]
We divide the proof in three steps.

\smallskip
{\em Step 1: negativity of $\D(\mu)$.} Here we show that, for every $\Z$-periodic graph $\G$, every subset $V\subseteq\V_\G$, every $q\in(2,4)$ and every $\mu>0$, one always has $\D(\mu)<0$. Note that, by \eqref{eq:l=0} and Lemma \ref{lem:mu-star}, to prove this it is enough to construct functions $u_\eps$ such that $\|u_\eps\|_{L^2(\G)} \to 0$ and  $E(u_\eps)<0$ when $\eps \to 0$. Furthermore, it is clear that it is sufficient to obtain the result for $V=\{\overline{\vv}\}$ for a fixed $\overline{\vv}\in\V_\G$, since if $V$ contains more than one vertex the term $\sum_{\vv\in V}|u(\vv)|^q$ is not smaller than the value of $|u|^q$ at any given point of $V$.

Let then $V=\{\overline{\vv}\}$. Since $\G$ is a $\Z$-periodic graph according to the definition given in Section \ref{sec:prel}, let $\K, D, R$ be the corresponding periodicity cell and subsets of $\V_\K$. Let
\[
L_\K:=\left\{e\in \E_\K\,:\, \text{ there exists a unique }\vv\in D\,\text{ such that } e\succ \vv\right\}
\]
be the set of edges being incident at exactly one vertex $\vv$ in $D$, and set $l:=\min_{e\in L_\K}|e|$, $m:=\#L_\K$ and
\[
\widetilde{\K}=\K\setminus \bigcup_{e\in L_\K}\left(e\cap [0,l]\right),    
\]
where on each edge $e\in L_\K$ the corresponding vertex $\vv$ in $D$ is identified with $0$. 

For the sake of simplicity, let us first assume  that there are no edges in $\widetilde{\K}$ joining vertices in $D$. For every $\varepsilon>0$, let then $u_\ep\subset H^1_\mu(\G)$ be defined as
\[
u_\ep(x):=
\begin{cases}
\ep^2 e^{-\ep^q|(i+1)l-x|} \quad &\text{if}\,\, x\in e\cap[0,l],\,\,\text{for some}\,\,e\in L_{\K_i}\,\,\text{and}\,\,i\in\Z\\
     \ep^2 e^{-\ep^q|i|l} &\text{if}\,\, x\in \widetilde{\K}_i,\,\,\text{for some}\,\, i\in\Z\,,
\end{cases}
\]
where as usual $L_{\K_i}$ and $\widetilde{\K}_i$ are the copies of $L_\K$ and $\widetilde{\K}$ in $\K_i$ for every $i\in\Z$. Note that, exploiting the periodicity of $\G$ if necessary, there is no loss of generality in assuming that $u_\varepsilon(\overline{\vv})=\varepsilon^2$. As $\varepsilon\to0^+$ we then have
\[
\mu=\|u_\ep\|_{L^2(\G)}^2=m \ep^{4-q}+|\widetilde{\K}|\frac{e^{2l\ep^q}+1}{e^{2l\ep^q}-1}\ep^4= \left(m+\frac{|\widetilde{\K}|}{l}\right)\ep^{4-q}+o(\varepsilon^{4-q})
\]
and 
\begin{equation*}
    E(u_\ep)=\frac{1}2\|u_\varepsilon'\|_{L^2(\G)}^2-\frac1q|u_\varepsilon(\overline{\vv})|^q= \frac{m}{2}\ep^{q+4}-\frac{\ep^{2q}}{q}<0
\end{equation*}
since $q<4$. This proves the claim of Step 1 when there is no edge in $\widetilde{\K}$ joining vertices of $D$. To adapt the construction to cover this second case, however, it is enough to set $u_\varepsilon\equiv \ep^2 e^{-\ep^q|(i+1)l|}$ on each of such edges in $\widetilde{\K}_i$ and repeat the previous computations. 

\smallskip
{\em Step 2: proof of (i) and (ii).} If $V$ is such that $\#V<+\infty$, the existence of ground states of $E$ in $H_\mu^1(\G)$ for every $q\in(2,4)$ and every $\mu>0$ follows by $\D(\mu)<0$, \eqref{eq:l=0} and Lemma \ref{lem:compact-crit}. The same is true if $V$ is a $\Z$-periodic subset of $\V_\G$, simply using Lemma \ref{lem:exZper} in place of Lemma \ref{lem:compact-crit}.

\smallskip
{\em Step 3: proof of (iii).} Given $\G$, we  construct a set $V\subset \V_\G$ with $\#V=+\infty$ such that ground states of $E_{q,V}$ in $H_\mu^1(\G)$ never exist, independently of the values of $q\in(2,4)$ and $\mu$.

To this end, let $\overline{\vv}$ be a fixed vertex of the periodicity cell $\K$ of $\G$,  let $\overline{\vv}_i$ be the corresponding vertex in $\K_i$, for every $i\in\Z$, and call $\overline V$ the union of all the $\overline \vv_i$'s.
Let $a_n:= n(n+1)$, $n\in \N$, and set
\[
V:=\{\overline{\vv}_i \mid i \ne  a_n,\; \forall n \in\N \}.
\]
Note that, for every $n \ge 0$, between $\overline\vv_{a_n}$ and $\overline\vv_{a_{n+1}}$ (that are not in $V$) there are $2n+1$ consecutive copies of $\vv$, all in $V$.

We now show that, if $u\in H_\mu^1(\G)$ is such that $u>0$ everywhere on $\G$, then there exists $w\in H_\mu^1(\G)$ such that $E_{q,V}(w,\G)<E_{q,V}(u,\G)$. This, together with the fact that ground states, when they exist, do not vanish on $\G$, implies that for this choice of $V$ ground states never exist. 

Let then $u\in H_\mu^1(\G)$, $u>0$ on $\G$, be fixed. Since by construction $V$ is strictly contained in the set $\overline V$ and $u>0$, we have
\[
\sum_{\vv\in V}|u(\vv)|^q < \sum_{\vv\in\overline{V}}|u(\vv)|^q,
\]
both sums being finite by Lemma \ref{lem:mod-ineq-GN}. Therefore there exists $N\in \N$ such that 
\begin{equation}
\label{eq:ass1}
\sum_{i=-N}^N|u(\overline{\vv}_i)|^q>  \sum_{\vv\in V}|u(\vv)|^q.
\end{equation}
Let now $w\in H_\mu^1(\G)$ be the function obtained by composing $u$ with the discrete translation on $\G$ that, for every $i$, maps each point in $\K_i$ into its copy in $\K_{i+c_N}$ (where $c_N$ is the midpoint between $a_N$ and $a_{N+1}$).

By definition, $\|w'\|_{L^2(\G)}=\|u'\|_{L^2(\G)}$ and, since $\overline \vv_i \in V$ for every $i = a_N+1, \dots, a_{N+1}-1$, by \eqref{eq:ass1},
\[
\sum_{\vv\in V}|w(\vv)|^q >\sum_{i=a_N+1}^{a_{N+1}-1}|w(\overline{\vv}_i)|^q=\sum_{i=-N}^N|u(\overline{\vv}_i)|^q >\sum_{\vv\in V}|u(\vv)|^q,
\]
in turn implying that $E(w)<E(u)$ and concluding the proof.

\end{proof}

\section{The grid $\Q$ with $V$  finite: proof of Theorem \ref{thm:Vfin1} and Proposition \ref{prop:rad}}
\label{sec:QVfin}

Here we begin our analysis of the minimization problem $\D_{q,V}$ on the two--dimensional square grid $\Q$, focusing on sets $V\subset\V_\Q$ with $\#V<+\infty$. Since this section is rather long, it is divided in two subsections: in the first one we introduce and discuss a new minimization problem on the half-line $\R^+$, whereas in the second one we show how that is related to our original problem on $\Q$ and we use it to prove Theorem \ref{thm:Vfin1} and Proposition \ref{prop:rad}.

\subsection{A new minimization problem on $\R^+$}
Let $g:\Rp\to [4,+\infty)$ be defined as
\begin{equation}
\label{eq:gtilde}
g(x):=\begin{cases}
4\quad & \text{if }x \in [0,1]\\
4(2x-1)& \text{if }x>1
\end{cases}
\end{equation}
and set
\[
\begin{split}
H^1(\Rp, g\,dx):=\Big\{v:\Rp\to\R\,:\,&\,\|v\|_{L^2(\Rp,g\,dx)}^2+\|v'\|_{L^2(\Rp,g\,dx)}^2\\
&\qquad:=\int_{\Rp}|v(x)|^2g(x)\,dx+\int_{\Rp}|v'(x)|^2g(x)\,dx<+\infty\Big\}\,.
\end{split}
\]
Clearly, by \eqref{eq:gtilde}, $H^1(\Rp,g\,dx)\subset H^1(\Rp)$ and  is a Hilbert space endowed with the norm above.

For fixed $\alpha>0$, define $\widetilde{E}_{q,\alpha}:H^1(\Rp, g\,dx)\to\R$ as
\begin{equation}
\label{red-en}
\widetilde{E}_{q,\alpha}(v):=\f{1}{2}\|v'\|_{L^2(\Rp,g\,dx)}^2-\f{\alpha}{q} |v(0)|^{q}
\end{equation}
and set
\begin{equation*}
    X_\mu:=\left\{v\in H^1(\Rp, g\,dx) \mid 0<\|v\|_{L^2(\Rp,g\,dx)}^2\leq \mu\right\}
\end{equation*}
and 
\begin{equation*}
    \widetilde{\D}_{q,\alpha}(\mu):=\inf_{v\in X_\mu}\widetilde{E}_{q,\alpha}(v)\,.
\end{equation*}
Note that
\begin{equation}
	\label{eq:Dtildeleq0}
	\widetilde{\D}_{q,\alpha}(\mu)\leq\inf_{u\in X_\mu}\frac12\|u'\|_{L^2(\Rp,g\,dx)}^2=0\,.
\end{equation}
As usual, we say that $u\in X_\mu$ is a ground state of $\widetilde{E}_{q,\alpha}$ on $X_\mu$ if  
\begin{equation*}
    \widetilde{E}_{q,\alpha}(u)=\widetilde{\D}_{q,\alpha}(\mu).
\end{equation*}
The aim of this subsection is to prove the following result.
\begin{proposition}
	\label{prop:Dq-to-0}
	For every $q\in(2,4)$ and every $\alpha>0$ there exists a threshold $\widetilde{\mu}>0$, depending on $q$ and $\alpha$, such that for every $\mu\in(0,\widetilde{\mu})$ there results 
	\[
	\widetilde{E}_{q,\alpha}(u)>0\qquad \forall\,u\in X_\mu\,.
	\]
\end{proposition}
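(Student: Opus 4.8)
The plan is to show that for $\mu$ small the positive kinetic term in $\widetilde{E}_{q,\alpha}$ always dominates the negative pointwise term $-\tfrac{\alpha}{q}|v(0)|^q$ on $X_\mu$. Writing $v_0:=|v(0)|$ and $K:=\|v'\|_{L^2(\Rp,g\,dx)}$, and noting that $\widetilde{E}_{q,\alpha}(v)$ depends on $v$ only through $v_0$ and $K$ (and that $v_0=0$ gives $\widetilde{E}_{q,\alpha}(v)=\tfrac12 K^2>0$, since a nonzero element of $X_\mu$ cannot be constant because $\int_{\Rp}g=+\infty$), it suffices to prove $\tfrac12 K^2>\tfrac{\alpha}{q}v_0^q$ whenever $v_0>0$ and $\|v\|_{L^2(\Rp,g\,dx)}^2\le\mu$.

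First I would record a clean trace estimate. Since $g\ge4$ on $\Rp$, the embedding $H^1(\Rp,g\,dx)\hookrightarrow H^1(\Rp)$ comes with $\|v\|_{L^2(\Rp)}\le\tfrac12\|v\|_{L^2(\Rp,g\,dx)}$ and $\|v'\|_{L^2(\Rp)}\le\tfrac12 K$; combining this with the standard one-dimensional trace inequality $|v(0)|^2\le2\|v\|_{L^2(\Rp)}\|v'\|_{L^2(\Rp)}$ yields
\[
v_0^2\le\frac12\|v\|_{L^2(\Rp,g\,dx)}\,K\le\frac{\sqrt{\mu}}{2}\,K,
\]
so that $K\ge 2v_0^2/\sqrt\mu$ and hence $\tfrac12 K^2\ge 2v_0^4/\mu$. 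A direct computation then gives $\tfrac12 K^2>\tfrac{\alpha}{q}v_0^q$ as soon as $v_0>v_{**}(\mu):=\left(\tfrac{\alpha\mu}{2q}\right)^{1/(4-q)}$, using only $q<4$. This disposes of every $v$ with $v_0>v_{**}(\mu)$, and since $v_{**}(\mu)\to0$ it remains to treat the \emph{spread out} functions with $v_0\le v_{**}(\mu)$, which are uniformly small.

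The hard part is this second regime, where the crude trace estimate is useless (for small $K$ the bound above scales like $K^{q/2}$ with $q/2<2$, which a priori beats $K^2$) and where the borderline, two-dimensional nature of the weighted problem surfaces. Here I would exploit that the weight grows linearly, forcing decay, while the ``capacity'' of the origin diverges only logarithmically. Assuming $v\ge0$ and letting $R^*$ be the first point with $v(R^*)=v_0/2$, the mass constraint gives $\mu\ge(v_0/2)^2\int_0^{R^*}g\,dx\gtrsim v_0^2(R^*)^2$, while Cauchy--Schwarz against the weight gives
\[
\frac{v_0}{2}\le\int_0^{R^*}|v'|\,dx\le K\left(\int_0^{R^*}\frac{dx}{g}\right)^{1/2}=K\sqrt{S(R^*)},\qquad S(R)=\tfrac14+\tfrac18\ln(2R-1).
\]
Thus $K^2\ge v_0^2/(4S(R^*))$, and the mass bound forces $R^*\lesssim\sqrt{\mu}/v_0$, whence $S(R^*)\lesssim 1+\ln(\mu/v_0^2)$. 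The crucial point is that, \emph{because $q>2$, the power beats the logarithm}: using $v_0^2\lesssim\mu$ one gets $v_0^{q-2}S(R^*)\lesssim\mu^{(q-2)/2}\to0$, so that $\tfrac12 K^2\ge v_0^2/(8S(R^*))>\tfrac{\alpha}{q}v_0^q$ for $\mu$ small. (The case $R^*<1$ is immediate, since then $S(R^*)\le\tfrac14$ yields $K^2\ge v_0^2$ and $v_0^{q-2}\le v_{**}(\mu)^{q-2}\to0$.)

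Finally I would fix $\widetilde{\mu}>0$ so small that all the quantities tending to $0$ above lie below their respective thresholds uniformly in $v$, and conclude that $\widetilde{E}_{q,\alpha}(v)>0$ for every $v\in X_\mu$ and every $\mu\in(0,\widetilde{\mu})$. The main obstacle is precisely the logarithmic divergence of $\int_0^R dx/g$: it rules out any clean power-law trace inequality at the origin (reflecting the failure of $H^1$ to control point values in dimension two), so that positivity in the small--kinetic--energy regime can only be recovered by combining the linear growth of the weight with the subcritical gain $q>2$.
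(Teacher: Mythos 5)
Your proof is correct, but it follows a genuinely different route from the paper's. The paper argues by contradiction: assuming $\widetilde{\D}_{q,\alpha}(\mu)\le 0$ for all small $\mu$, it invokes the existence of constrained minimizers (its Lemma on attainment of $\widetilde{\D}_{q,\alpha}$), writes the Euler--Lagrange system for the weighted operator $(gu')'=\lambda gu$, multiplies by $gu'$ to obtain a Pohozaev-type identity, and combines this with the asymptotics $|u_\mu(0)|=O(\mu^{1/(4-q)})$ and $\int_0^1 u_\mu^2=|u_\mu(0)|^2+o(|u_\mu(0)|^2)$ to force $\lambda\mu=o(u(0)^q)$ and hence $\widetilde{E}_{q,\alpha}(u)=\alpha(\tfrac12-\tfrac1q)u(0)^q+o(u(0)^q)>0$, a contradiction. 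You instead give a direct, quantitative argument on all of $X_\mu$: the trace inequality disposes of the regime $|v(0)|\gtrsim\mu^{1/(4-q)}$, and in the complementary regime the weighted Cauchy--Schwarz bound $K^2\ge v_0^2/(4S(R^*))$ together with the mass bound $R^*\lesssim\sqrt{\mu}/v_0$ reduces everything to showing that $v_0^{q-2}\bigl(1+\ln(\mu/v_0^2)\bigr)$ is small; since $\sup_{0<s\le\mu}s^{(q-2)/2}\ln(\mu/s)=O(\mu^{(q-2)/2})$, the power $q>2$ indeed beats the logarithm, exactly as you claim. Your approach buys an explicit threshold $\widetilde{\mu}$ and completely avoids the existence theory, the Euler--Lagrange equation, and the decay analysis needed to justify the paper's integrations by parts; the paper's approach is softer but recycles lemmas it needs anyway and isolates the structural identity behind the phenomenon. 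The only points worth spelling out in a final write-up are the two small facts you use implicitly: that $R^*$ is finite whenever $v(0)\neq0$ because $\int_{\Rp}g=+\infty$, and the elementary maximization of $s\mapsto s^{(q-2)/2}\ln(\mu/s)$ justifying the displayed $O(\mu^{(q-2)/2})$ bound. Neither is a gap.
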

Before proving Proposition \ref{prop:Dq-to-0}, we need the next two lemmas.

\begin{lemma}
	\label{lem:dq<o->ex}
	Let $q\in(2,4)$, $\alpha>0$ and $\mu>0$ be fixed. If  $\widetilde{\D}_{q,\alpha}(\mu)<0$, then there exists a ground state of $\widetilde{E}_{q,\alpha}$ on $X_\mu$. Furthermore, if $u\in X_\mu$ is a ground state of $\widetilde{E}_{q,\alpha}$, then $\|u\|_{L^2(\Rp, g\,dx)}^2=\mu$ and there exists $\lambda\in\R$ such that
	\begin{equation}
	\label{EL-eq-Dtilde}
	\begin{cases}
	\left(u'\, g\right)'=\la u\, g &  \text{on }(0,1) \cup (1,+\infty)\\
	4u_+'(0)=-\alpha|u(0)|^{q-2}u(0)\,.
	\end{cases}
	\end{equation}
\end{lemma}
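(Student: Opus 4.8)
The plan is to prove Lemma~\ref{lem:dq<o->ex} in two stages: first the existence of a minimizer when $\widetilde{\D}_{q,\alpha}(\mu)<0$, then the two structural conclusions (full mass and the Euler--Lagrange system). For the existence part, I would take a minimizing sequence $(u_n)_n\subset X_\mu$, so $\widetilde{E}_{q,\alpha}(u_n)\to\widetilde{\D}_{q,\alpha}(\mu)<0$. Since $\|u_n\|_{L^2(\Rp,g\,dx)}^2\leq\mu$ and, from the definition \eqref{red-en} together with the negativity of the level, the kinetic term $\frac12\|u_n'\|_{L^2(\Rp,g\,dx)}^2$ is controlled by $\frac{\alpha}{q}|u_n(0)|^q$, I would use a Gagliardo--Nirenberg type bound on the half-line (note $H^1(\Rp,g\,dx)\subset H^1(\Rp)$ by \eqref{eq:gtilde}, with $g\geq 4$ giving the reverse comparison of norms) to bound $|u_n(0)|^q$ by a power of $\|u_n'\|_{L^2(\Rp,g\,dx)}$ strictly less than $2$ times a power of the mass. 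Because $q<4$, this yields a bound on $\|u_n'\|_{L^2(\Rp,g\,dx)}$, hence $(u_n)_n$ is bounded in $H^1(\Rp,g\,dx)$.

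Next I would extract a weakly convergent subsequence $u_n\rightharpoonup u$ in $H^1(\Rp,g\,dx)$. The crucial point is that $H^1(\Rp,g\,dx)$ embeds compactly into $C_{loc}$ near the origin, so in particular $u_n(0)\to u(0)$; this is where the weighted space and the boundary evaluation cooperate. By weak lower semicontinuity of the kinetic term and convergence of the pointwise term, $\widetilde{E}_{q,\alpha}(u)\leq\widetilde{\D}_{q,\alpha}(\mu)$. I would then argue that $u\not\equiv 0$: if $u(0)=0$ then $\widetilde{E}_{q,\alpha}(u_n)$ would be asymptotically nonnegative, contradicting the strict negativity of the level, so $u(0)\neq 0$ and $u\in X_\mu$ (the constraint $\|u\|_{L^2}^2\leq\mu$ passes to the limit by weak lower semicontinuity, and $u\ne 0$ gives the strict positivity of the mass). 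Hence $u$ is a ground state. The main obstacle I anticipate is handling the inequality constraint $0<\|u\|^2\leq\mu$ rather than an equality: I expect the argument for full mass to rest on a scaling observation, namely that if $\|u\|_{L^2(\Rp,g\,dx)}^2=m<\mu$ one could rescale $u\mapsto\sqrt{\mu/m}\,u$ (keeping it in $X_\mu$) and exploit $q>2$ exactly as in \eqref{D-q-mu-rescaled}, strictly decreasing the energy below $\widetilde{\D}_{q,\alpha}(\mu)$ whenever $u(0)\ne0$, a contradiction; this forces $m=\mu$.

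Finally, for the Euler--Lagrange system, with $\|u\|_{L^2(\Rp,g\,dx)}^2=\mu$ now an equality constraint, I would apply the Lagrange multiplier rule to $\widetilde{E}_{q,\alpha}$ constrained to the sphere $\{\|v\|_{L^2(\Rp,g\,dx)}^2=\mu\}$. Taking variations $\varphi\in H^1(\Rp,g\,dx)$ and integrating by parts on $(0,1)$ and $(1,+\infty)$ separately (since $g$ is only piecewise smooth, being constant on $[0,1]$ and affine on $(1,\infty)$ by \eqref{eq:gtilde}) produces the weak form. The bulk equation $(u'g)'=\lambda u g$ follows on each subinterval from the interior variations, while the boundary term at $x=0$, where $g(0)=4$, yields the condition $4u_+'(0)=-\alpha|u(0)|^{q-2}u(0)$; one should also check that the interface matching at $x=1$ is automatic from $u\in H^1$ and the continuity of $g$ there, so no extra condition appears. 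This last step is mostly routine once the regularity of the minimizer is in hand; the genuinely delicate part remains the compactness and the full-mass argument in the first two paragraphs.
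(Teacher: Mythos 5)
Your proposal is correct and follows essentially the same route as the paper: boundedness of minimizing sequences via the Gagliardo--Nirenberg inequality on $\Rp$ combined with $g\geq 1$, weak convergence with $u_n(0)\to u(0)$, exclusion of the vanishing limit by the strict negativity of the level, weak lower semicontinuity to get a minimizer in the relaxed set $X_\mu$, the scaling argument (exploiting $q>2$ and $u(0)\neq0$) to force $\|u\|_{L^2(\Rp,g\,dx)}^2=\mu$, and finally the Lagrange multiplier rule for \eqref{EL-eq-Dtilde}. The only cosmetic difference is that the paper extracts weak limits of $\sqrt{g}\,u_n$ and $\sqrt{g}\,u_n'$ in $L^2(\Rp)$ and identifies them distributionally rather than working with weak convergence in $H^1(\Rp,g\,dx)$ directly, but the two formulations are equivalent here.
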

\begin{proof}
	Note first that, by the Gagliardo--Nirenberg inequality \eqref{GN-inf} on $\R^+$ and  $g(x)\geq 1$ for every $x\in \Rp$, there results for every $u\in X_\mu$
	\begin{equation*}
	\begin{split}
	\widetilde{E}_{q,\alpha}(u)&\geq \f{1}{2}\|u'\|_{L^2(\Rp,g\,dx)}^2-\f{C_\infty^{q/2}\alpha}{q}\|u\|_{L^2(\Rp)}^{q/2}\|u'\|_{L^2(\Rp)}^{q/2}\\
	&\geq \f{1}{2}\|u'\|_{L^2(\Rp, g\,dx)}^2-\f{C_\infty^{q/2}\alpha}{q}\|u\|_{L^2(\Rp,g\,dx)}^{q/2}\|u'\|_{L^2(\Rp,g\,dx)}^{q/2}\\
	&\geq\f{1}{2}\|u'\|_{L^2(\Rp,g\,dx)}^2-\f{C_\infty^{q/2}\alpha\mu^{q/4}}{q}\|u'\|_{L^2(\Rp,g\,dx)}^{q/2}\,,
	\end{split}
	\end{equation*}
	which shows that $\widetilde{\D}_{q,\alpha}(\mu)>-\infty$ since $q<4$.
	
	Let then $(u_{n})_{n}\subset X_\mu$ be a minimizing sequence for \eqref{red-en}, namely 
	\begin{equation*}
	0<\|u_n\|_{L^2(\Rp,g\,dx)}^2\leq \mu\quad\text{and}\quad \widetilde{E}_{q,\alpha}(u_{n})\to\widetilde{\D}_{q,\alpha}(\mu)\qquad\text{as }n\to+\infty.
	\end{equation*}
	The previous computation shows that $(u_n)_n$ is bounded in $H^1(\Rp,g\,dx)$, and thus also in $H^{1}(\Rp)$. Hence, up to subsequences, there exists $u\in H^{1}(\Rp)$ such that $u_{n}\deb u$ weakly in $H^{1}(\Rp)$ and $u_{n}(0)\to u(0)$ as $n\to+\infty$. Moreover, since
	\begin{equation*}
	\int_{\Rp}|u_{n}(x)|^{2}\,g(x)\,dx\leq \mu,
	\end{equation*}
	there exists $v\in L^{2}(\Rp)$ such that, up to subsequences, $\sqrt{g}u_{n}\deb v$ weakly in $L^{2}(\Rp)$ as $n\to+\infty$, so that by the uniqueness of the distributional limit it follows that, up to subsequences,  $\sqrt{g}u_{n}\deb \sqrt{g}u$ weakly in $L^{2}(\Rp)$ as $n\to+\infty$ and 
	\begin{equation*}
	\|u\|_{L^2(\Rp,g\,dx)}^2\leq\liminf_{n\to+\infty}\|u_n\|_{L^2(\Rp,g\,dx)}^2\leq\mu.
	\end{equation*}
	Arguing analogously we have also that, up to subsequences, $\sqrt{g}u_{n}'\deb \sqrt{g}u'$ weakly in $L^{2}(\Rp)$ as $n\to+\infty$ and
	\begin{equation*}
	\|u'\|_{L^2(\Rp,g\,dx)}^2\leq\liminf_{n\to+\infty}\|u_n'\|_{L^2(\Rp,g\,dx)}^2.
	\end{equation*}
	Therefore, if we show that $u\not\equiv0$ on $\R^+$, we obtain $u\in X_\mu$ and
	\begin{equation*}
	\widetilde{\D}_{q,\alpha}(\mu)\leq \widetilde{E}_{q,\alpha}(u)\leq \liminf_{n\to+\infty}\widetilde{E}_{q,\alpha}(u_{n})=\widetilde{\D}_{q,\alpha}(\mu)\,,
	\end{equation*}
	i.e. $u$ is the desired ground state. 
	
	Assume thus by contradiction that $u\equiv 0$. This implies that $u_{n}(0)\to 0$ as $n\to+\infty$, in turn yielding
	\begin{equation*}
	\widetilde{\D}_{q,\alpha}(\mu)=\lim_{n\to+\infty}\widetilde{E}_{q,\alpha}(u_{n})\geq \liminf_{n\to+\infty}\f{1}{2}\int_{\Rp}|u_{n}'(x)|^{2}\,g(x)\,dx\geq 0,
	\end{equation*}
	which contradicts the fact that $\widetilde{\D}_{q,\alpha}(\mu)<0$ by assumption. Hence, if $\widetilde{\D}_{q,\alpha}(\mu)<0$, then a ground state exists.
	
	Let now $u\in X_\mu$ be such that $\widetilde{E}_{q,\alpha}(u)=\widetilde{\D}_{q,\alpha}(\mu)$. Note that $u(0)\neq0$, since if this were not the case we would have $\widetilde{\D}_{q,\alpha}(\mu)=E_{q,\alpha}(u)>0$, contradicting \eqref{eq:Dtildeleq0}. Assume that $\|u\|_{L^2(\Rp,g\,dx)}^2:=m<\mu$. Then, setting $\displaystyle v:=\sqrt{\frac \mu m}u$, we obtain $\|v\|_{L^2(\Rp,g\,dx)}^2=\mu$, so that in particular $v\in X_\mu$, and
	\[
	\widetilde{E}_{q,\alpha}(v)=\frac \mu m\frac{1}{2}\|u'\|_{L^2(\Rp,g\,dx)}^2-\left(\frac{\mu}{m}\right)^{\frac q2}\frac{\alpha}{q}|u(0)|^q<\frac\mu m \widetilde{E}_{q,\alpha}(u)\leq\widetilde{\D}_{q,\alpha}(\mu)\,,
	\]
	which is a contradiction. Hence, if $u\in X_\mu$ is a ground state of $\widetilde{E}_{q,\alpha}$, then $\|u\|_{L^2(\Rp,g\,dx)}^2=\mu$. This also shows that 
	\[
	\widetilde{\D}_{q,\alpha}(\mu)=\inf\left\{\widetilde{E}_{q,\alpha}(u)\,:\, u\in H^1(\Rp,g\,dx),\,\|u\|_{L^2(\Rp,g\,dx)}^2=\mu\right\}\,,
	\]
	and writing the Euler-Lagrange equations of this minimization problem proves the existence of $\lambda\in\R$ for which \eqref{EL-eq-Dtilde} holds.
\end{proof}

\begin{lemma}
\label{lem:Dq<0-kin-bound}
Let $q\in(2,4)$ and $\alpha>0$ be fixed. If, for every $\mu$, $u_\mu\in X_\mu$ is such that $\widetilde{E}_{q,\alpha}(u_\mu)<0$, then as $\mu\to0^+$ there results
\begin{equation}
\label{eq:umu0}
|u_{\mu}(0)|=O\left(\mu^{\frac 1{4-q}}\right)
\end{equation}
and
\begin{equation}
\label{eq:u01}
\int_{0}^{1}|u_\mu(x)|^{2}\,dx=|u_\mu(0)|^{2}+o\left(|u_\mu(0)|^{2}\right)\,.
\end{equation}
\end{lemma}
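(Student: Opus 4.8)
The plan is to read both estimates directly off the sign condition $\widetilde{E}_{q,\alpha}(u_\mu)<0$, which by \eqref{red-en} forces the (weighted) kinetic energy to be dominated by the boundary term:
\[
\|u_\mu'\|_{L^2(\Rp,g\,dx)}^2<\f{2\alpha}{q}|u_\mu(0)|^q.
\]
Observe first that this already rules out $u_\mu(0)=0$ (otherwise $\widetilde{E}_{q,\alpha}(u_\mu)\ge0$), so $|u_\mu(0)|>0$ throughout, which legitimizes the divisions made below.

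To obtain \eqref{eq:umu0} I would combine the previous inequality with the Gagliardo--Nirenberg inequality \eqref{GN-inf} on $\Rp$. Since $g\ge1$ on $\Rp$, one has $\|u_\mu\|_{L^2(\Rp)}\le\|u_\mu\|_{L^2(\Rp,g\,dx)}\le\sqrt\mu$ and $\|u_\mu'\|_{L^2(\Rp)}\le\|u_\mu'\|_{L^2(\Rp,g\,dx)}$, whence
\[
|u_\mu(0)|^2\le C_\infty\|u_\mu\|_{L^2(\Rp)}\|u_\mu'\|_{L^2(\Rp)}\le C_\infty\sqrt\mu\left(\f{2\alpha}{q}\right)^{1/2}|u_\mu(0)|^{q/2}.
\]
As $q\in(2,4)$ gives $2-q/2=(4-q)/2>0$, dividing by $|u_\mu(0)|^{q/2}$ and raising to the power $2/(4-q)$ yields $|u_\mu(0)|\le C\mu^{1/(4-q)}$, i.e. \eqref{eq:umu0}; in particular $|u_\mu(0)|\to0$ as $\mu\to0^+$.

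For \eqref{eq:u01} I would exploit that $g\equiv4$ on $[0,1]$, so that the energy bound gives $\int_0^1|u_\mu'|^2\,dx\le\f14\|u_\mu'\|_{L^2(\Rp,g\,dx)}^2<\f{\alpha}{2q}|u_\mu(0)|^q$. By Cauchy--Schwarz, $|u_\mu(x)-u_\mu(0)|\le\big(\int_0^1|u_\mu'|^2\,dx\big)^{1/2}\le C|u_\mu(0)|^{q/2}$ for every $x\in[0,1]$. Expanding the square in $\int_0^1|u_\mu|^2\,dx=\int_0^1\big(u_\mu(0)+(u_\mu(x)-u_\mu(0))\big)^2\,dx$ then produces the main term $|u_\mu(0)|^2$, a cross term bounded by $2|u_\mu(0)|\cdot C|u_\mu(0)|^{q/2}=C|u_\mu(0)|^{1+q/2}$, and a quadratic remainder bounded by $C|u_\mu(0)|^q$.

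The crux is that both error exponents strictly exceed $2$: since $q>2$ gives $1+q/2>2$, and $|u_\mu(0)|\to0$, both remainders are $o(|u_\mu(0)|^2)$, which yields \eqref{eq:u01}. Notice the two parts use opposite ends of the range $(2,4)$: \eqref{eq:umu0} relies on $q<4$, while \eqref{eq:u01} relies on $q>2$. I do not anticipate a serious obstacle here; the only real care is in the bookkeeping of the weighted versus unweighted $L^2$ norms (using $g\ge1$ globally for \eqref{eq:umu0} and $g\equiv4$ on $[0,1]$ for \eqref{eq:u01}), and in noting that the whole argument uses only the sign condition $\widetilde{E}_{q,\alpha}(u_\mu)<0$ — not the existence of a minimizer — so the conclusions hold for \emph{any} family $(u_\mu)$ with negative energy, exactly as stated.
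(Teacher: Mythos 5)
Your proposal is correct and follows essentially the same route as the paper: both parts derive the kinetic bound $\|u_\mu'\|_{L^2(\Rp,g\,dx)}^2<\frac{2\alpha}{q}|u_\mu(0)|^q$ from the sign condition, then obtain \eqref{eq:umu0} via \eqref{GN-inf} with $g\ge1$ (using $q<4$), and \eqref{eq:u01} by expanding $|u_\mu(x)|^2$ around $|u_\mu(0)|^2$ with the increment controlled by Cauchy--Schwarz (using $q>2$ and $|u_\mu(0)|\to0$). Your explicit remarks that $u_\mu(0)\neq0$ and that only the sign condition is used are correct and consistent with the paper's argument.
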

\begin{proof}
Combining \eqref{GN-inf} on $\Rp$ with $g\geq 1$ and $\widetilde{E}_{q,\alpha}(u_\mu)<0$, we have
\begin{equation*}
|u_\mu(0)|^4\leq C_\infty^2\|u_\mu\|_{L^2(\Rp)}^2\|u_\mu'\|_{L^2(\Rp)}^2\leq C_\infty^2\mu\|u_\mu'\|_{L^2(\Rp,g\,dx)}^2<\f{2\alpha C_\infty^2}{q}\mu|u_\mu(0)|^{q}
\end{equation*}
that gives \eqref{eq:umu0}.
Moreover, since for every $x \in [0,1]$,
\[
|u_\mu(x)|^2  = |u_\mu(0)|^2 + |u_\mu(x) -u_\mu(0)|^2 + 2 u_\mu(0)(u_\mu(x)-u_\mu(0)) 
\]
and
\[
|u_\mu(x)-u_\mu(0)| \le \int_0^x|u_\mu'(t)|\,dt \le  \left(\int_0^1|u_\mu'|^2 g\,dt\right)^{1/2} <\left(\frac{2\alpha}q |u_\mu(0)|^q\right)^{1/2} =o( |u_\mu(0)|),
\]
we obtain, as $\mu \to 0$,
\[
|u_\mu(x)|^2 = |u_\mu(0)|^2 + o( |u_\mu(0)|^2).
\]
Integrating over $[0,1]$ yields \eqref{eq:u01}.
\end{proof}

We are now in a position to prove Proposition \ref{prop:Dq-to-0}.

\begin{proof}[Proof of Proposition \ref{prop:Dq-to-0}]
We prove the claim by contradiction. Suppose that for every $\mu>0$ there exists $w = w_\mu\in X_\mu$ such that $\widetilde{E}_{q,\alpha}(w)\leq0$. By \eqref{eq:Dtildeleq0} and Lemma \ref{lem:dq<o->ex}, this implies that there exists $u = u_\mu\in X_\mu$ such that $\widetilde{\D}_{q,\alpha}(\mu)=\widetilde{E}_{q,\alpha}(u)\leq 0$, $\|u\|_{L^2(\Rp,g\,dx)}^2=\mu$ and $u$ solves \eqref{EL-eq-Dtilde} for a suitable $\lambda\in\R$. As usual, with no loss of generality, we can take $u \ge 0$ on $\Rp$.

As $\widetilde{E}_{q,\alpha}'(u)u= 0$,
\begin{equation}
\label{stima1}
\int_{\Rp}|u'|^{2}g\dx =  \alpha u(0)^{q }- \lambda\mu
\end{equation}
from which, using $\widetilde{E}_{q,\alpha}(u)\leq 0$, we also see that $\lambda >0$. Then  \eqref{EL-eq-Dtilde} implies that  $gu'$ is increasing on $\R^+$. Therefore it has a limit as $x \to +\infty$ and it is easily seen that this limit must be zero. This also implies that $u'(x) \le 0$ in $[0,+\infty)$, so that, by \eqref{EL-eq-Dtilde}, we have
\[
(gu)'' = 2g'u'+gu'' = 2g'u' +\lambda gu - g'u' = g'u' + \lambda gu \le \lambda gu\qquad\text{on } [1,+\infty)
\]
which shows (by a standard application of the Maximum Principle) that $gu$ tends to $0$ (exponentially) as $x\to +\infty$. Having established that $gu$ and $gu'$ tend to zero as $x \to +\infty$, the following computations are fully justified.

Multiplying the equation in  \eqref{EL-eq-Dtilde} by $gu'$ and integrating yields, keeping into account the values of $g(0)$ and $u'(0)$,
\[
\int_{\R^+} (gu')'gu'\dx = \frac12\int_{\R^+} \frac{d}{dx}|gu'|^2\dx = -\frac12 |g(0)u'(0)|^2 = -8|u'(0)|^2 = -\frac{\alpha^2}{2}u(0)^{2q-2}
\]
for the left-hand-side. As for the right-hand-side,
\begin{align*}
\lambda\int_{\R^+} uu'g^2\dx &= \frac{\lambda}2\int_{\R^+}  g^2\frac{d}{dx}|u|^2\dx = -\frac{\lambda}2 |g(0)u(0)|^2 - \lambda\int_{\R^+} u^2gg'\dx \\
&= -8\lambda u(0)^2 -8 \lambda \int_1^{+\infty} u^2 g\dx = -8\lambda u(0)^2 - 8\lambda\mu + 32\lambda\int_0^1 u^2\dx
\end{align*}
since $g' \equiv 0$ on $(0,1)$ and $g'\equiv 8$ on $(1,+\infty)$.

Equating both sides and using Lemma \ref{lem:Dq<0-kin-bound} we then obtain, as $\mu \to 0$,
\[
\frac{\alpha^2}{16} u(0)^{2q-2} = \lambda u(0)^2   + \lambda\mu - 4\lambda \int_0^1  u^2\dx = \lambda\mu -3\lambda u(0)^2 + o(\lambda u(0)^2) = \lambda\mu + o(\lambda\mu)
\]
or
\[
\lambda\mu = \frac{\alpha^2}{16} u(0)^{2q-2} + o(u(0)^{2q-2}) = o(u(0)^q) \qquad\text{as } u(0) \to 0,
\]
since $q>2$. Now from this and \eqref{stima1} we see that
\[
\widetilde{E}_{q,\alpha}(u) = \frac12\int_{\R^+} |u'|^2g\dx - \frac{\alpha}{q} u(0)^q = \frac\alpha{2} u(0)^{q }- \frac12 \lambda\mu -  \frac{\alpha}{q} u(0)^q = \alpha\left(\frac12 - \frac1q\right) u(0)^q + o(u(0)^q) >0
\]
as $u(0) \to 0$, contradicting the assumption $\widetilde{E}_{q,\alpha}(u) \le 0$ and concluding the proof.
\end{proof}

\subsection{The minimization problem on $\Q$}
\label{subsec:Q} 
In this second part of the section we exploit the analysis of the previous part to prove our main results concerning the minimization problem $\D_{q,V}$ on $\Q$ when $V$ is finite. To this end, we first need to recall the notion of radial functions on the two--dimensional grid $\Q$.
\begin{definition}
	\label{u-rad}
	A function $f:\Q\to \R$ is radial with respect to the vertex $\vv\in\V_\Q$ if there exists $\widetilde{f}:\Rp\to\R$ such that $f(x)=\widetilde{f}(d(x,\vv))$ for every $x\in \Q$, where $d(x,\vv)$ denotes the (shortest path) distance between the point $x$ and the vertex $\vv$. Furthermore, we say that $f$ is radially decreasing on $\Q$ with respect to $\vv$ if it is radial with respect to $\vv$ and $\widetilde{f}$ is decreasing on $\Rp$.
\end{definition}
\begin{remark}
	\label{rem:rad}
	Note that, if $f\in L^p(\Q)$ is radial with respect to $\vv$, one can write $\|f\|_{L^p(\Q)}^p$ in terms of the associated $\widetilde{f}$ in the following way. For every $n\in\N$, denote by $B_n$ the open ball in $\Q$ of radius $n$ centered at $\vv$. It is not difficult to prove (e.g. by induction) that the number of edges of $\Q$ belonging to $B_{n+1}\setminus B_n$ is given by $4(2n+1)$. Then $f$ being radial implies
	\[
	\|f\|_{L^p(\Q)}^p=\sum_{n=0}^{+\infty} \sum_{e\in \E_\Q\cap B_{n+1}\setminus B_n}\int_e|f|^p\,dx=\sum_{n=0}^{+\infty}4(2n+1)\int_n^{n+1}|\widetilde{f}|^p\,dx\,.
	\]
	
\end{remark}
The importance of radial functions in our setting is given by the next rearrangement-type result.
\begin{lemma}
	\label{lem:rad1}
	Let $u\in H^1(\Q)$, $u\geq0$ on $\Q$, and $\vv\in \V_{\Q}$. Then there exists $w\in H^1(\Q)$, $w\ge0$ on $\Q$, such that $w$ is radial with respect to $\vv$ and
	\begin{equation}
	\label{rad-f-grid}
	\|w\|_{L^2(\Q)}^2\leq \|u\|_{L^2(\Q)}^2,\quad \|w'\|_{L^2(\Q)}^2\leq \|u'\|_{L^2(\Q)}^2\quad \text{and}\quad w(\vv)=u(\vv).
	\end{equation}
	Moreover, if $u$ is not radial with respect to $\vv$, then the inequalities in \eqref{rad-f-grid} are strict.
\end{lemma}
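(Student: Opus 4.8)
The plan is to build $w$ as the \emph{spherical mean} of $u$ with respect to $\vv$, exploiting the shell structure of $\Q$ recorded in Remark \ref{rem:rad}. Fix $\vv$ and, for each $n\ge 0$, enumerate the $4(2n+1)$ edges of $\Q$ lying in the annulus $B_{n+1}\setminus B_n$ as $e_1^n,\dots,e_{N_n}^n$ with $N_n:=4(2n+1)$, parametrising each $e_k^n$ by $s\in[0,1]$ with $s=0$ at its endpoint at distance $n$ from $\vv$ and $s=1$ at its endpoint at distance $n+1$; writing $u_k^n(s):=u(e_k^n(s))$, every point at distance $n+s$ from $\vv$ is of the form $e_k^n(s)$ for exactly one $k$. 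I would then try to define the radial competitor $w$ through its profile $\widetilde w$ by
\begin{equation*}
\widetilde w(n+s):=\frac{1}{N_n}\sum_{k=1}^{N_n}u_k^n(s),\qquad s\in[0,1],\ n\ge 0,
\end{equation*}
so that $w(x)=\widetilde w(d(x,\vv))$ is radial and nonnegative.

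With this definition the two energy inequalities would follow shell by shell from Jensen's (equivalently, the Cauchy--Schwarz) inequality. Using the expression in Remark \ref{rem:rad}, on the annulus $B_{n+1}\setminus B_n$ one has $N_n\int_0^1\widetilde w(n+s)^2\,ds=\frac1{N_n}\int_0^1\big(\sum_k u_k^n\big)^2\,ds\le \int_0^1\sum_k (u_k^n)^2\,ds$, and summing over $n$ gives $\|w\|_{L^2(\Q)}^2\le\|u\|_{L^2(\Q)}^2$; differentiating $\widetilde w$ and arguing identically yields $N_n\int_0^1\widetilde w'(n+s)^2\,ds\le\int_0^1\sum_k((u_k^n)')^2\,ds$, hence $\|w'\|_{L^2(\Q)}^2\le\|u'\|_{L^2(\Q)}^2$. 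Since the four edges incident at $\vv$ all have $s=0$ at $\vv$, one gets $\widetilde w(0)=\frac14\sum_{k=1}^4 u(\vv)=u(\vv)$, i.e.\ $w(\vv)=u(\vv)$. Moreover the inequality in Jensen's estimate is strict, on a set of positive measure, precisely when the values $u_k^n(\cdot)$ are not all equal, that is exactly when $u$ fails to be constant on spheres; this is the mechanism producing the strict inequalities when $u$ is not radial.

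The crux --- and the step I expect to be the main obstacle --- is that the profile just defined need not be continuous, so $w$ need not belong to $H^1(\Q)$. Indeed the number of edges jumps from $N_{n-1}=4(2n-1)$ to $N_n=4(2n+1)$ as one crosses the sphere of radius $n$, and the left and right limits of $\widetilde w$ at an integer $n$ are two \emph{different} weighted averages of the values of $u$ at the distance-$n$ vertices (weighted by the number of inward, resp.\ outward, incident edges, which differ at the vertices lying on the coordinate axes). Thus the naive spherical mean generally carries a jump at each such sphere, and no fixed per-sphere weighting can remove it for all $u$. Reconciling continuity with the two energy bounds is therefore the heart of the proof: I would fix the values $\widetilde w(n)$ at integer radii and correct $\widetilde w$ on each $[n,n+1]$ by an affine (in $s$) term matching these values, choosing the $\widetilde w(n)$ so that the correction preserves both inequalities.

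The kinetic bound is preserved by taking the increments $\widetilde w(n+1)-\widetilde w(n)$ equal to the shell-averaged radial increment $\frac1{N_n}\sum_k\big(u_k^n(1)-u_k^n(0)\big)$ (again via Jensen applied to $\int_0^1 (u_k^n)'$), while the $L^2$ bound and the nonnegativity $w\ge 0$ must be recovered from the slack in the inequalities above; balancing these requirements simultaneously, and keeping $\widetilde w(0)=u(\vv)$, is the delicate point, and it is here that the precise multiplicities $4(2n\pm1)$ enter. Once a continuous profile satisfying both bounds is produced, $w\in H^1(\Q)$ is radial with the required properties, and the strictness statement is inherited from the strict Jensen inequalities whenever $u$ is not constant on some sphere.
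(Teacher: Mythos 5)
The constructive core of your proposal --- averaging $u$ over the $4(2n+1)$ edges of each annulus $B_{n+1}\setminus B_n$, deducing both norm inequalities shell by shell from the discrete Jensen inequality $(\sum_i x_i)^2\le N\sum_i x_i^2$, reading off $w(\vv)=u(\vv)$ from the four edges at $\vv$, and obtaining strictness from the equality case --- is exactly the paper's proof of this lemma. The point on which you depart from the paper is the continuity of the spherical mean: the paper simply states that $w$ is continuous and proceeds, while you single this out as the main obstacle. Your objection is mathematically well founded: for $n\ge2$ the inner and outer one-sided limits at the sphere of radius $n$ are the weighted averages $\frac{1}{4(2n-1)}\sum_{\ww}m_-(\ww)u(\ww)$ and $\frac{1}{4(2n+1)}\sum_{\ww}m_+(\ww)u(\ww)$ over the $4n$ vertices $\ww$ at distance $n$, with $(m_-,m_+)=(1,3)$ at the four vertices on the coordinate axes through $\vv$ and $(m_-,m_+)=(2,2)$ elsewhere; for $u$ a tent function centred at $(2,0)$ these limits are $1/12$ and $3/20$ (while the plain vertex average is $1/8$), so the naive mean is genuinely discontinuous at integer radii. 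This issue is present, unacknowledged, in the paper's own argument.

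However, as a self-contained proof your proposal is incomplete precisely at the step you yourself identify as the crux. The affine correction you sketch does not close the gap: if the increments $\widetilde w(n+1)-\widetilde w(n)$ are chosen to preserve the averaged derivative (hence the kinetic bound via Jensen), the correction on $[n,n+1]$ degenerates to adding the constant $d_n$ equal to the accumulated jumps of the naive mean up to radius $n$, and nothing in your argument controls $\sum_n 4(2n+1)\,d_n^2$ or the sign of the corrected profile; conversely, replacing the uniform average by per-edge weights chosen to restore continuity destroys the termwise Jensen estimate for the $L^2$ norm, since that estimate forces the weight of each of the $N_n$ edges of a shell to be exactly $1/N_n$. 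A genuinely new mechanism is needed here, and none is supplied. In short, the parts of your argument that are actually carried out coincide with the paper's proof, and the additional difficulty you correctly isolate is left unresolved.
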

\begin{proof}
	Let $\vv\in\V_\Q$ be fixed, take $u\in H^1(\Q)$, $u\ge0$ on $\Q$, and define $w:\Q\to\R$ to be, at every point $x\in\Q$, the mean value of $u$ on the sphere in $\Q$ centered at $\vv$ of radius $d(x,\vv)$, that is (recalling Remark \ref{rem:rad})
	\begin{equation}
	w_e(x):=\f{1}{4(2n+1)}\sum_{f\in \E_\Q\cap B_{n+1}\setminus B_n} u_f(x)\qquad\text{if }x\in e\text{ and }e\in\E_\Q\cap B_{n+1}\setminus B_n,\text{ for some }n\in\N\,,
	\end{equation}
	where every edge  of $\Q$ is identified with the interval $[0,1]$ so that $x=0$ corresponds to its closest vertex to $\vv$. By definition, $w\geq0$ on $\Q$, it is continuous and radial with respect to $\vv$ and $w(\vv)=u(\vv)$. Moreover, 
	\begin{equation*}
	\begin{split}
	\|w\|_{L^2(\Q)}^2&=
	\sum_{n=0}^{+\infty}\sum_{e\in \E_\Q\cap B_{n+1}\setminus B_n}\int_e w_e^2\,dx=\sum_{n=0}^{+\infty}4(2n+1)\int_0^1\bigg(\f{1}{4(2n+1)}\sum_{f\in\E_\Q\cap B_{n+1}\setminus B_n} u_f(x)\bigg)^2\,dx\\
	&\leq \sum_{n=0}^{+\infty}4(2n+1)\int_0^1 \bigg(\f{1}{4(2n+1)}\sum_{f\in \E_\Q\cap B_{n+1}\setminus B_n} u_f^2(x)\bigg)\,dx\\
	&=\sum_{n=0}^{+\infty}\sum_{f\in\E_\Q\cap B_{n+1}\setminus B_n}\int_f u_f^2\,dx=\|u\|_{L^2(\Q)}^2,
	\end{split}
	\end{equation*}
	where we made use of the  inequality
	\begin{equation}
	\label{discr-Jens}
	\left(\sum_{i=1}^{n}x_{i}\right)^{2}\leq n\sum_{i=1}^{n}x_{i}^{2},\qquad\forall (x_i)_{i=1}^n\subset\R,\,n\in\N\,.
	\end{equation}
	Arguing analogously on $w'$ one also obtains $\|w'\|_{L^2(\Q)}\leq\|u'\|_{L^2(\Q)}$. Moreover, since the equality in \eqref{discr-Jens} is realized if and only if $x_{i}=x_{j}$ for every $i,j=1,\dots,n$, equalities in \eqref{rad-f-grid} are realized if and only if $u$ (and thus $u'$) is radial with respect to the vertex $\vv$.
\end{proof}
We can now prove Theorem \ref{thm:Vfin1} and Proposition \ref{prop:rad}. To this end, recall first that (see Lemma \ref{lem:bot-spec-0} below) 
\begin{equation}
	\label{eq:lQ=0}
	\la(\Q)=0\,.
\end{equation}
\begin{proof}[Proof of Theorem \ref{thm:Vfin1}]
To prove the results about $\EE_{q,V}$ and the ground states, by \eqref{eq:lQ=0} and Lemmas \ref{lem:mu-star}--\ref{lem:compact-crit}, it it enough to show that $\mu_q^*>0$ for every $V\subset \V_\Q$ with $\#V<+\infty$ and $q\in(2,4)$. To do this, given $V$ and $q$, we prove that for every $u\in H_\mu^1(\Q)$, $u\geq0$ on $\Q$, there exists $v:\Rp\to\R$ such that $v\in X_\mu$ and
\begin{equation}
\label{eq:v<u}
\widetilde{E}_{q,d}(v)\leq E_{q,V}(u,\Q)\,,
\end{equation}
where $d:=\#V$ and $X_\mu$, $\widetilde{E}_{q,d}$ are as in the previous subsection. Note that, once we have this, Proposition \ref{prop:Dq-to-0} implies that $E_{q,V}(u,\Q)>0$ for every $u\in H_\mu^1(\Q)$ as soon as $\mu$ is small enough, that is exactly $\mu_q^*>0$, and we are done.

To construct $v$ satisfying \eqref{eq:v<u}, note first that, since $\#V=d$,
\[
E_{q,V}(u,\Q)\geq\frac12\|u'\|_{L^2(\Q)}^2-\frac dq|u(\overline{\vv})|^q\,,
\]
where $\overline{\vv}:=\arg\max_{\vv\in V}|u(\vv)|^q$. Hence, by Lemma \ref{lem:rad1} there exists $w\in H^1(\Q)$, radial with respect to $\overline{\vv}$ such that $\|w\|_{L^2(\Q)}^2\leq\|u\|_{L^2(\Q)}^2=\mu$ and 
\begin{equation}
\label{eq:wu}
\frac12\|w'\|_{L^2(\Q)}^2-\frac dq|w(\overline{\vv})|^q\leq E_{q,V}(u,\Q)\,.
\end{equation}
Moreover, since the function $g$ as defined in \eqref{eq:gtilde} satisfies $g(x)\leq 4(2n+1)$ for every $x\in[n,n+1]$ and every $n\in\N$, by Remark \ref{rem:rad} it follows that
\[
\|\widetilde{w}\|_{L^2(\Rp,g\,dx)}^2\leq \|w\|_{L^2(\Q)}^2,\qquad\|\widetilde{w}'\|_{L^2(\Rp,g\,dx)}^2\leq\|w'\|_{L^2(\Q)}^2\,,
\]
where $\widetilde{w}:\Rp\to\R$ is the function associated to $w$ as in Definition \ref{u-rad}. Hence, choosing $v=\widetilde{w}$, we have $v\in X_\mu$, $v(0)=w(\overline{\vv})$ and 
\[
\widetilde{E}_{q,d}(v)\leq\frac12\|w'\|_{L^2(\Q)}^2-\frac dq|w(\overline{\vv})|^q\,,
\]
that together with \eqref{eq:wu} leads to \eqref{eq:v<u} and completes the proof of $\mu_q^*>0$.

It remains to show that $\lim_{q\to2^+}\mu_q^*=2$. For every $n\in\N$, $n\geq2$, let $f_{n}:\R^+\to \R$ be defined as
\begin{equation*}
f_{n}(x):=
\begin{cases}
\log n  & \text{if }x\in [0,1]\\
\log n -\log x  & \text{if }x\in [1,n]\\
0 & \text{if }x\in[n,+\infty)\,,
\end{cases}
\end{equation*}
and, for a fixed vertex $\ww\in V$, set $(w_n)_n\subset H^1(\Q)$ to be $w_n(x):=n^{-2}f_n(d(x,\ww))$ for every $x\in\Q$, where $d(x,\ww)$ denotes the distance in $\Q$ between $x$ and $\ww$.  By definition, $w_n$ is radial on $\Q$ with respect to $\ww$. Furthermore, recalling Remark \ref{rem:rad} and the definition of $f_n$, direct computations yields 
\[
\begin{split}
\frac{4}{n^{4}}\Bigg[\log^2 n &\,+\int_1^{+\infty}x(\log x -\log n )^2\,dx\Bigg]\leq\|w_n\|_{L^2(\Q)}^2\\
&\,\leq \frac{4}{n^{4}}\Bigg[\log^2 n +3\int_1^{+\infty}x(\log x-\log n )^2\,dx\Bigg]\,,
\end{split}
\] 
in turn giving, since $\displaystyle\int_1^{+\infty}x(\log x -\log n )^2\,dx=\frac{n^2}4+o(n^2)$ as $n\to+\infty$, 
\[
\mu_n:=\|w_n\|_{L^2(\Q)}^2= O(n^{-2})\qquad\text{as }n\to+\infty\,.
\] 
Furthermore, again by Remark \ref{rem:rad} we have that for every $n$
\[
\|w_n'\|_{L^2(\Q)}^2\leq 12\int_{1}^{+\infty} x|f_n'|^2\,dx=12n^{-4}\log n\,.
\]
Set then $\displaystyle q_n:=2+\frac1n$, so that 
\begin{equation*}
|w_n(\ww)|^{q_n}=n^{-(4+\frac{2}{n})}\log^{2+\frac{1}{n}}n, \quad\forall\,\,n\in \N,\,n\geq2\,,
\end{equation*}
and, as $n\to+\infty$, $\mu_n\to0$ and
\begin{equation*}
\begin{split}
\D_{q_n}\left(\mu_n\right)&\leq E_{q_n}(w_n)\leq \f{1}{2}\|w_n'\|_{L^2(\Q)}^2\left(1-\frac{2n}{2n+1}\frac{|w_n(\ww)|^{2+\frac{1}{n}}}{\|w_n'\|_{L^2(\Q)}^2}\right)\\
&\leq \f{1}{2}\|w_n'\|_{L^2(\Q)}^2\left(1-\frac{2n}{12(2n+1)}\frac{\log^{1+\frac{1}{n}} n}{n^{\frac{2}{n}}}\right)<0\,.
\end{split} 
\end{equation*}
This shows the existence of a sequence of exponents $\displaystyle q_n=2+\frac1n$ such that $\mu_{q_n}^*\to0$ as $n\to+\infty$. 

Observe now that, arguing exactly as in the proof of Lemma \ref{lem:q<q1} and making use of $\D_{q_n}(\mu_n)<0$, it follows that $\D_q(\mu_n)<0$ for every $q\in(2,q_n]$, that is $\mu_q^*\leq\mu_n$ for every $q\in(2,q_n]$.

Let then $(\widetilde{q}_n)_n\subset\R$ be any sequence such that $\widetilde{q}_n\to2^+$ as $n\to+\infty$. Since for every $n$ there exists $m=m(n)\in\N$ such that $\displaystyle\widetilde{q}_n\leq 2+\frac1m=q_m$ and $m(n)\to+\infty$ as $n\to+\infty$, we immediately have that $0<\mu_{\widetilde{q}_n}^*\leq \mu_{m}\to0$ as $n\to+\infty$. This shows that $\lim_{q\to2^+}\mu_q^*=0$ and concludes the proof.
\end{proof}

\begin{proof}[Proof of Proposition \ref{prop:rad}]
	By Theorem \ref{thm:Vfin1}, if $u\in H_\mu^1(\Q)$ is a ground state of $E_{q,V}$ with $V=\left\{\overline{\vv}\right\}$, then $E(u)\leq0$. This implies that (up to a change of sign) $u$ is a positive solution of \eqref{prob} with
	\[
	\lambda=\frac{|u(\overline{\vv})|^q-\|u'\|_{L^2(\Q)}^2}{\mu}=\left(1-\frac2q\right)\frac{|u(\overline{\vv})|^q}{\mu}-\frac{2 E(u)}{\mu}>0\,.
	\]
	The radiality of $u$ with respect to $\overline{\vv}$ follows directly  by Lemma \ref{lem:rad1}. Indeed, if $u$ were not radial, then Lemma \ref{lem:rad1} would ensure the existence of $w\in H^1(\Q)$ with $\|w\|_{L^2(\Q)}^2<\mu$ and $E(w)<E(u)\leq0$. Setting then $\displaystyle v:=\frac{\sqrt \mu}{ \|w\|_{L^2(\Q)}}\,w$ we would have $v\in H_\mu^1(\Q)$ and
	\[
	E(v)=\frac{\mu}{\|w\|_{L^2(\Q)}^2}\frac12\|w'\|_{L^2(\Q)}^2-\left(\frac{\mu}{\|w\|_{L^2(\Q)}^2}\right)^{\frac q2}\frac{|w(\overline{\vv})|^q}{q}<\frac \mu{\|w\|_{L^2(\Q)}^2} E(w)< E(w)<E(u)\,,
	\]
	which is impossible since $u$ is a ground state.
	
	To show that $u$ is decreasing with respect to $\overline{\vv}$ along the radial direction, let $\widetilde{u}\in H^1(\R^+)$ be the function associated to $u$ as in Definition \ref{u-rad}. Looking at $\Q$ as embedded in $\R^2$ with vertices on $\Z^2$ and $\overline{\vv}$ at the origin, $\widetilde{u}$ is e.g. the restriction of $u$ to the positive part of the $x$ axis. Since $u$ solves \eqref{prob} on $\Q$, is positive on $\Q$ and radial with respect to $\overline{\vv}$, it is immediate to see that $\widetilde{u}$ satisfies
	\begin{equation}
	\label{eq:utilde}
	\begin{cases}
	\widetilde{u}''=\lambda \widetilde{u} & \text{on }(n,n+1)\,,\forall n\in\N\\
	4\widetilde{u}_+'(0)=-\widetilde{u}^{q-1}(0) & \\
	\widetilde{u}_-'(n)=3\widetilde{u}_+'(n) & \forall n\in\N\setminus\{0\}\,.
	\end{cases}
	\end{equation}
	Note that the pointwise condition at $x=n$, for every $n\in\N\setminus\left\{0\right\}$, comes from the fact that, at the corresponding vertex of $\Q$, $u$ satisfies the homogeneous Kirchhoff condition and, by radiality, it agrees on three of the four edges emanating from that vertex.
	
	Set now $T:=\sup\{t\geq0\,:\, \widetilde{u}_+'\text{ is strictly negative on }[0,t)\}$. By \eqref{eq:utilde} and $\widetilde{u}>0$ on $\R^+$, we have $T>0$. To show that $\widetilde{u}$ is decreasing on $\R^+$, it is then enough to prove that $T=+\infty$. Assume by contradiction that this is not the case, i.e. $T<+\infty$. By definition of $T$ and \eqref{eq:utilde}, it then follows that $\widetilde{u}_-'(T)=\widetilde{u}_+'(T)=0$. Then, again by \eqref{eq:utilde} and $\widetilde{u}>0$ on $\R^+$, it follows that $\widetilde{u'}_+(x)>0$ on $(T,+\infty)$, which is impossible since $u\in H^1(\R^+)$. Hence, $T=+\infty$ and $\widetilde{u}$ is decreasing on $\R^+$. 
\end{proof}
   
\section{The grid $\Q$ with $V$ infinite: proof of Theorems \ref{thm:grid-Z-per}--\ref{thm:grid-Z2-per}--\ref{thm:noexQ}}
\label{sec:Vinf}

In this section, we take $V\subset\V_{\Q}$ such that $\#V=+\infty$ and we prove our main results in this setting, i.e. Theorems \ref{thm:grid-Z-per}--\ref{thm:grid-Z2-per}--\ref{thm:noexQ}.

\begin{remark}
\label{rem:fund-dom}
  Comparing Definitions \ref{def:I-Z-per}--\ref{def:I-Z2-per}  with that of periodic graph given in \cite[Definition 4.1.1]{BK}, it is easily seen that, if $V\subset\V_\Q$ is $\Z$-periodic according to Definition \ref{def:I-Z-per} and $\vec{v}\in\R^2\setminus\{0\}$ is the associated vector, then there exist $V_0\subset V$, with $\#V_0<+\infty$, and $\Q_0\subset\Q$, with $|\Q_0|=+\infty$, such that  
  \[
  V=\bigcup_{k\in\Z}\left(V_0+k\vec{v}\right),\quad\Q=\bigcup_{k\in\Z}\left(\Q_0+k\vec{v}\right)\quad\text{and}\quad \Q_0\cap V=V_0.
  \]
  Analogously, if $V\subseteq\V_\Q$ is $\Z^2$-periodic according to Definition \ref{def:I-Z2-per} and $\vec{v}_1,\vec{v}_2\in\R^2\setminus\{0\}$ are the associated vectors, then there exist $V_0\subset V$, with $\#V_0<+\infty$, and $\Q_0\subset\Q$, with $|\Q_0|<+\infty$, such that
  \[
  V=\bigcup_{k_1,k_2\in\Z}\left(V_0+k_1\vec{v}_1+k_2\vec{v}_2\right),\quad\Q=\bigcup_{k_1, k_2\in\Z}\left(\Q_0+k_1\vec{v}_1+k_2\vec{v}_2\right)\quad\text{and}\quad \Q_0\cap V=V_0.
  \]
  \end{remark}
Recall that, by Lemma \ref{lem:level} and \eqref{eq:lQ=0}, if $V\subseteq\V_\Q$ is $\Z$-periodic or $\Z^2$-periodic and $q\in(2,4)$, then
\[
-\infty<\D_{q,V}(\mu,\Q)\leq 0\,.
\]
The next lemma is the analogue of Lemmas \ref{lem:compact-crit}--\ref{lem:exZper} in the context of $\Z$-periodic and $\Z^2$-periodic sets $V$.
\begin{lemma}
\label{lem:compact-crit-grid-inf}
Let $\Q$ be the two--dimensional grid, $V\subseteq\V_{\Q}$ be $\Z$-periodic or $\Z^2$-periodic, and $q\in(2,4)$. If, for some $\mu>0$, there results $\D_{q,V}(\mu,\Q)<0$, then ground states of $E_{q,V}$ in $H_\mu^1(\Q)$ exist.
\end{lemma}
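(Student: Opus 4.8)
The plan is to adapt the concentration--compactness argument of Lemmas \ref{lem:compact-crit} and \ref{lem:exZper} to the periodic setting on $\Q$, the only genuinely new difficulty being that the defect set $V$ is now infinite, so that pointwise convergence at the vertices no longer controls the full defect sum $\sum_{\vv\in V}|u(\vv)|^q$.

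First I would fix a minimizing sequence $(u_n)\subset H^1_\mu(\Q)$ with $E(u_n)\to\D(\mu)<0$. Coercivity follows as before: combining Lemma \ref{lem:mod-ineq-GN} with the two--dimensional Gagliardo--Nirenberg inequality \eqref{GN-2d} and using $q<4$ shows that $(u_n)$ is bounded in $H^1(\Q)$. Moreover, writing $\frac1q\sum_{\vv\in V}|u_n(\vv)|^q=\frac12\|u_n'\|_{L^2(\Q)}^2-E(u_n)\geq -E(u_n)$ and recalling $E(u_n)\to\D(\mu)<0$, I get a uniform lower bound $\sum_{\vv\in V}|u_n(\vv)|^q\geq c_0>0$ for all large $n$.

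The core step is to prevent vanishing and relocate the concentration. Here I would use the elementary inequality $\sum_{\vv\in V}|u_n(\vv)|^q\leq\big(\sup_{\vv\in V}|u_n(\vv)|\big)^{q-2}\sum_{\vv\in V}|u_n(\vv)|^2$ together with the trace--type bound $\sum_{\vv\in\V_\Q}|u_n(\vv)|^2\leq C\|u_n\|_{H^1(\Q)}^2$ (proved exactly along the lines of Lemma \ref{lem:mod-ineq-GN}). Since $(u_n)$ is bounded in $H^1$, the lower bound $c_0$ forces $\sup_{\vv\in V}|u_n(\vv)|\not\to0$; hence, up to a subsequence, there are $\delta>0$ and $\vv_n\in V$ with $|u_n(\vv_n)|\geq\delta$. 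By Remark \ref{rem:fund-dom}, $V$ is the union of lattice translates of a finite set $V_0$, and the associated vectors lie in $\Z^2$, so I may write $\vv_n=\ww_\ast+\tau_n$ with $\ww_\ast\in V_0$ fixed (after a further subsequence) and $\tau_n$ a lattice vector. Since these translations are symmetries of both $\Q$ and $V$, the translated functions $\tilde u_n:=u_n(\,\cdot\,+\tau_n)$ still form a minimizing sequence in $H^1_\mu(\Q)$ with $|\tilde u_n(\ww_\ast)|\geq\delta$. Renaming $\tilde u_n$ as $u_n$ and passing to a weak limit $u_n\deb u$ in $H^1(\Q)$ (with $u_n\to u$ locally uniformly, hence $u_n(\vv)\to u(\vv)$ at every vertex), I obtain $|u(\ww_\ast)|\geq\delta$, so that $u\not\equiv0$ on $V$ and $m:=\|u\|_{L^2(\Q)}^2>0$.

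Finally I would run the splitting argument, and here lies the main obstacle: justifying the Brezis--Lieb decomposition for the \emph{discrete} defect term,
\[
\sum_{\vv\in V}|u_n(\vv)|^q=\sum_{\vv\in V}|u_n(\vv)-u(\vv)|^q+\sum_{\vv\in V}|u(\vv)|^q+o(1),
\]
which is the Brezis--Lieb lemma applied in $\ell^q(V)$ to the sequence $(u_n(\vv))_{\vv\in V}$, bounded in $\ell^q(V)$ by Lemma \ref{lem:mod-ineq-GN} and converging pointwise to $(u(\vv))_{\vv\in V}$. Combined with the usual $L^2$ and $H^1$ splittings (Brezis--Lieb \cite{BL}), this yields $E(u_n)=E(u_n-u)+E(u)+o(1)$. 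From here, exactly as in the final part of the proof of Lemma \ref{lem:compact-crit}, the scaling inequalities $E(u)>\frac m\mu\D(\mu)$ (using $u\not\equiv0$ on $V$, $q>2$ and $\D(\mu)<0$) and $\liminf_n E(u_n-u)\geq\frac{\mu-m}\mu\D(\mu)$ (using $\|u_n-u\|_{L^2(\Q)}^2\to\mu-m$) combine to rule out $0<m<\mu$ through the contradiction $\D(\mu)>\D(\mu)$. Hence $m=\mu$, so $u\in H^1_\mu(\Q)$, and since $E(u_n-u)\geq\frac{\|u_n-u\|_{L^2(\Q)}^2}\mu\D(\mu)\to0$ the splitting gives $E(u)\leq\D(\mu)$, i.e.\ $u$ is the desired ground state. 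The argument is identical in the $\Z$-periodic and $\Z^2$-periodic cases, the only difference being whether the lattice of symmetries is generated by one or two vectors.
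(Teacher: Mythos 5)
Your argument is correct and follows essentially the same route as the paper's proof: use the lattice symmetries of $\Q$ and $V$ to translate the minimizing sequence so that concentration is pinned to the finite fundamental set $V_0$ of Remark \ref{rem:fund-dom}, rule out vanishing via the $\ell^2$-type bound on vertex values, and conclude with the Brezis--Lieb splitting and the strict scaling inequalities coming from $q>2$. The only (minor, welcome) difference is that you make explicit the $\ell^q(V)$ Brezis--Lieb step for the infinite defect sum, which the paper leaves implicit when it refers back to Lemmas \ref{lem:compact-crit} and \ref{lem:exZper}.
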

\begin{proof}
The proof is almost identical to that of Lemma \ref{lem:exZper}. 

If $V$ is $\Z$-periodic, let $(u_n)_n\subset H_\mu^1(\Q)$ be such that $E(u_n)\to\D(\mu)$ as $n\to+\infty$. Furthermore, exploiting the periodicity of $V$ there is no loss of generality in taking $u_n$ to satisfy $\sup_{\vv\in V}|u_n(\vv)|=\max_{\vv\in V_0}|u_n(\vv)|$ for every $n$, where $V_0$ is the set associated to $V$ as in the first part of Remark \ref{rem:fund-dom}. Then, arguing as in the proof of Lemma \ref{lem:exZper}, up to subsequences we have that $u_n\rightharpoonup u$ weakly in $H^1(\Q)$ and $u_n\to u$ in $L_{\text{\normalfont loc}}^\infty(\Q)$. The same computations in the final part of the proof of Lemma \ref{lem:compact-crit} guarantee that either $u\equiv0$ on $\Q$ or $u\in H_\mu^1(\Q)$ is the desired ground state. To rule out the first case, observe that if it were $u\equiv0$, then the argument in the proof of Lemma \ref{lem:exZper} and the fact that $\Q_0\cap V=V_0$, with $\#V_0<+\infty$ by construction, would imply $\displaystyle\sum_{\vv\in V}|u_n(\vv)|^q\to0$ as $n\to+\infty$, in turn yielding $\D(\mu)=\lim_{n\to+\infty}E(u_n)\geq0$, i.e. a contradiction. This proves the lemma when $V$ is $\Z$-periodic.

If $V$ is $\Z^2$-periodic, it is straightforward to see that we can take $(u_n)_n\subset H_\mu^1(\Q)$ such that $E(u_n)\to\D(\mu)$ as $n\to+\infty$ and $u_n$ attains its $L^\infty$ norm in $\Q_0$ for every $n$, where $\Q_0$ is the set associated to $V$ as in the second part of Remark \ref{rem:fund-dom}. Then the proof follows the same argument already discussed for $\Z$-periodic sets.
\end{proof}

\begin{proof}[Proof of Theorem \ref{thm:grid-Z-per}]
	We split the proof in two parts.
	
	\smallskip
	{\em Part 1: $q\in(2,3)$.} By Lemma \ref{lem:compact-crit-grid-inf}, to show that when $V$ is $\Z$-periodic and $q\in(2,3)$ ground states of $E_{q,V}$ in $H_\mu^1(\Q)$ exist for every $\mu$ it is enough to prove that $\D_{q,V}(\mu,\Q)<0$. 
	
	To this end, let $\vec{v}=(v_x,v_y)\in\R^2\setminus\{0\}$ be the vector associated to $V$ according to Definition \ref{def:I-Z-per} and suppose, without loss of generality, that the origin of $\R^2$ belongs to $V$. Moreover, let $R:=|v_x|+|v_y|$ and note that
	\[
	V\supset\bigcup_{k\in\Z}k\vec{v}\,,
	\]
	where with a slight abuse of notation we identify the vector $k\vec{v}$ with its final point in $\R^2$ (which is also a point of $V$ for every $k\in\Z$, by the periodicity of $V$).
	
 	Consider then, for every $\ep>0$, the function
	\begin{equation}
	\label{eq:varphi}
	\varphi_{\ep}(x,y):=k_{\ep}e^{-\ep(|x|+|y|)},\quad (x,y)\in \R^2,
	\end{equation}
	with
	\begin{equation}
	\label{eq:k}
	k_{\ep}:=\sqrt{\f{\ep\mu}{2}\f{1-e^{-2\ep}}{1+e^{-2\ep}}},
	\end{equation}
	and define $u_\ep\in H^1(\Q)$ as the restriction of $\varphi_\ep$ to the grid $\Q$.
By construction, $u_\ep\in H^1_\mu(\Q)$ and $\|u_\ep'\|_{L^2(\Q)}^2=\mu\ep^2$. Furthermore, as $\varepsilon\to0^+$,
	\[
	\sum_{\vv\in V}|u_\ep(\vv)|^q\geq \sum_{k\in\Z}|u_\ep(k\vec{v})|^q\geq  k_\ep^q\sum_{k=0}^{+\infty}e^{-\ep q R k}=\frac{k_\ep^q}{1-e^{-\ep qR}}= \frac{\mu^{q/2}}{2^{q/2}q R}\ep^{q-1}+o(\varepsilon^{q-1})\,,
	\]
	so that
	\[
	\D(\mu)\leq E(u_\ep)= \f{\mu\ep^2}{2}-\frac{1}{q}\sum_{\vv\in V}|u_\ep(\vv)|^q\leq \f{\mu\ep^2}{2}-\frac{\mu^{ q/2}}{2^{ q/2}q R}\ep^{q-1}+o(\varepsilon^{q-1})<0
	\]
	as soon as $\varepsilon$ is small enough and $q\in(2,3)$.
	
	\smallskip
	{\em Part 2: $q\in[3,4)$.} By Lemmas \ref{lem:mu-star}--\ref{lem:q-star}--\ref{lem:compact-crit-grid-inf}, it is enough to show that $\mu_3^*>0$. To this end, for every $\vv\in V$ denote by $e_\vv\in\E_\Q$ the horizontal edge for which $\vv$ is the left vertex. Set then $\displaystyle\G':=\bigcup_{\vv\in V}e_\vv$. Since $V$ is $\Z$-periodic, it follows that
	\[
	\min\left\{\sup_{j\in \Z}\#\left(\V_{\G'}\cap H_{j}\right), \sup_{j\in \Z}\#\left(\V_{\G'}\cap V_{j}\right) \right\}<+\infty\,.
	\]
	Indeed, by Definition \ref{def:I-Z-per}(ii), $V$ is fully contained in a strip of $\R^2$ parallel to the vector $\vec{v}$ and bounded in the direction $\vec{v}^\perp$. Since such  strip cannot contain simultaneously both a horizontal line $H_j$ and a vertical one $V_{j'}$, for any $j,j'\in\Z$, it follows that at least one between $\displaystyle \sup_{j\in\Z}\#\left(\V_{\G'}\cap H_{j}\right)$ and $\displaystyle \sup_{j\in \Z}\#\left(\V_{\G'}\cap V_{j}\right)$ is finite. 
	
	Hence, by Lemma \ref{lem:ineq-G'} with $q=3$ we have 
	\begin{equation}
		\label{eq:u3_1}
		\|u\|_{L^3(\G')}^3\leq K\sqrt{\mu}\|u'\|_{L^2(\Q)}^2\qquad\forall u\in H_\mu^1(\Q)\,.
	\end{equation}
	Furthermore, since the vertices of $V$ are by construction in one-to-one correspondence with the edges of $\G'$, for every $u\in H_\mu^1(\Q)$ we obtain
	\[
	\begin{split}
	\left|\sum_{\vv\in V}|u(\vv)|^{3}-\|u\|_{L^{3}(\G')}^{3}\right|&=\left|\sum_{\vv\in V}\int_{e_\vv}\left(|u(\vv)|^{3}-|u(y)|^{3}\right)\,dy\right|\leq 3\int_{\G'}|u|^{2}|u'|\,dy\\
	&\leq 3 \|u'\|_{L^{2}(\Q)}\|u\|_{L^4(\Q)}^{2}\leq 3\sqrt{M_4}\sqrt{\mu}\|u'\|_{L^{2}(\Q)}^{2}\,,
	\end{split}
	\]
	where we used the H\"older inequality and \eqref{GN-2d} with $p=4$. Coupling with \eqref{eq:u3_1} and plugging into the definition of $E_3$ leads to
	\begin{equation*}
	E_{3}(u)=\f{1}{2}\|u'\|_{L^2(\Q)}^2-\f{1}{3}\sum_{\vv\in V}|u(\vv)|^3\geq \f{1}{2}\|u'\|_{L^{2}(\Q)}^{2}\left(1-\frac{K+3\sqrt{M_4}}{3}\sqrt{\mu}\right)>0
	\end{equation*}
	for every $u\in H_\mu^1(\Q)$ as soon as $\mu$ is sufficiently small, i.e. $\mu_3^*>0$ and we conclude.
\end{proof}

\begin{proof}[Proof of Theorem \ref{thm:grid-Z2-per}]
	By Lemma \ref{lem:compact-crit-grid-inf}, if $V$ is a $\Z^2$-periodic subset of $\V_\Q$ according to Definition \ref{def:I-Z2-per}, to prove Theorem \ref{thm:grid-Z2-per} it is enough to show that $\D_{q,V}(\mu,\Q)<0$ for every $q\in(2,4)$ and $\mu>0$.
	
	To this end, let $\vec{v}_1,\vec{v}_2\in\R^2\setminus\{0\}$ be the two linearly independent vectors associated to $V$ as in Definition \ref{def:I-Z2-per}, and assume with no loss of generality that the origin of $\R^2$ belongs to $V$. Hence, by the periodicity of $V$, the vertices $k_1\vec{v}_1+k_2\vec{v}_2$ belong to $V$, for every $k_1,k_2\in\Z$ (here and in the following we identify again vectors in $\R^2$ with the corresponding final points). 
	
	Let $\gamma\subset\Q$ be a path of minimum length in $\Q$ connecting the origin (that by assumption is a vertex in $V$) to the vertex $\vec{v}_{1}$ (that is again a vertex in $V$), and define the subgraph $\Gamma_{0}$ of $\Q$ as
	\begin{equation*}
	\Gamma_{0}:=\bigcup_{k_{1}\in\Z}\left(\gamma+k_{1}\vec{v}_{1}\right).
	\end{equation*}
	By construction, $\Gamma_{0}$ is connected, has infinite length and contains all the vertices of $V$ of the form $k_1\vec{v}_1$, with $k_1\in \Z$. Denote also by $R_{1}:=|\gamma|=|v_{1,x}|+|v_{1,y}|$ the length of $\gamma$, namely,  by construction, the distance in $\Gamma_{0}$ between two consecutive vertices $k_{1}\vec{v}_{1}$ and $(k_{1}+1)\vec{v}_{1}$, for every $k_1$. 
	
	Observe that, since the length of $\gamma$ is finite and $\vec{v}_{1}$ and $\vec{v}_{2}$ are linearly independent, there exists $\widetilde{k}\in \N$ such that $\widetilde{k}$ is the smallest natural number for which $\gamma+\widetilde{k}\vec{v_{2}}$ does not intersect $\gamma$. For every $k_{2}\in\N\setminus\{0\}$, define then the subgraph $\Gamma_{k_{2}}$ of $\Q$ as
	\begin{equation*}
	\Gamma_{k_{2}}:=\Gamma_{0}+k_{2}\widetilde{k}\vec{v_{2}}.
	\end{equation*}  
	By construction, $\Gamma_{k_{2}}\cap\Gamma_{k_{2}'}=\emptyset$ for every $k_2, k_2'\in \N$, $k_2\neq k_2'$. Moreover, for every $k_{2}\in \N$,  $\Gamma_{k_{2}}$ is connected, has infinite length and contains all the vertices of $V$ in the form $k_1\vec{v}_1+k_2\widetilde{k}\vec{v}_2$, with $k_1\in\Z$. Furthermore,  the distance in $\Gamma_{k_{2}}$ between the vertices $k_{1}\vec{v_{1}}+k_{2}\widetilde{k}\vec{v_{2}}$ and $(k_{1}+1)\vec{v_{1}}+k_{2}\widetilde{k}\vec{v_{2}}$ is again $R_{1}$, for every $k_1$, whereas the distance between $\Gamma_{k_2}$ and $\Gamma_0$ in $\Q$ is $k_2R_2$, with $R_2:=\widetilde{k}(|\vec{v}_{2,x}|+|\vec{v}_{2,y}|)$.
	
	Fix now $\mu>0$ and, for every $\varepsilon>0$, let $\varphi_\varepsilon,k_\varepsilon$ be as \eqref{eq:varphi}, \eqref{eq:k} and, as in the proof of Theorem \ref{thm:grid-Z-per}, set $u_\varepsilon\in H_\mu^1(\Q)$ to be the restriction of $\varphi_\varepsilon$ to the grid $\Q$. As above, $u_{\ep}\in H^{1}_{\mu}(\Q)$ and $\|u_{\ep}'\|_{L^{2}(\Q)}^{2}=\ep^{2}\mu$ for every $\ep>0$. Moreover, as $\varepsilon\to0^+$ (recalling also the definition \eqref{eq:k} of $k_\varepsilon$)
	\begin{equation*}
	\begin{split}
	\sum_{\vv\in V}|u_\ep(\vv)|^{q}&\geq \sum_{k_2=0}^{+\infty}\sum_{\vv\in V\cap \Gamma_{k_2}}|u_\varepsilon(\vv)|^q\geq k_{\ep}^{q}\sum_{k_{2}=0}^{+\infty}\sum_{k_1=0}^{+\infty}e^{-\ep q\left(|k_1\vec{v}_{1,x}+k_2\vec{v}_{2,x}|+|k_1\vec{v}_{1,y}+k_2\vec{v}_{2,y}|\right)}\\
	&\geq k_{\ep}^{q}\sum_{k_1=0}^{+\infty}e^{-\varepsilon q k_1 R_1}\sum_{k_2=0}^{+\infty}e^{-\varepsilon q k_2 R_2}=\frac{\mu^{ q/2}}{2^{ q/2}q^2 R_1 R_2}\varepsilon^{q-2}+o(\varepsilon^{q-2})\,,
	\end{split}
	\end{equation*}
	so that for every $q\in(2,4)$
	\begin{equation*}
	\D(\mu)\leq E(u_{\ep})\leq \f{\mu}{2}\ep^{2}-\frac{\mu^{ q/2}}{2^{ q/2}q^3 R_1 R_2}\varepsilon^{q-2}+o(\varepsilon^{q-2})<0
	\end{equation*}
	taking sufficiently small $\varepsilon$.
\end{proof}

\begin{proof}[Proof of Theorem \ref{thm:noexQ}]
	
	To prove Theorem \ref{thm:noexQ}(i) it is enough to exhibit an example of a set $V$ that is not $\Z$-periodic but is contained in a strip of $\R^2$, bounded in one direction and unbounded in the orthogonal one, for which ground states never exist, independently of $q$ and $\mu$. To do this, it is enough to repeat verbatim the argument of Step 3 in the proof of Theorem \ref{thm:Z-per}. We identify as usual $\Q$ with the corresponding subset of $\R^2$ with vertices on $\Z^2$ and we denote by $\vv_{ij}$ the vertex with coordinates $(i,j)$ for every $i,j\in\Z$. Let $a_n = n(n+1)$, $n\in \N$, as in the proof of  Theorem \ref{thm:Z-per} and define 
	\[
	V =\{ v_{i0} \mid i \ne  a_n \; \forall n\in \N\}
	\]
By construction, $V$ is contained in the horizontal strip $\R\times[-1,1]$ of $\R^2$, so that it satisfies Definition \ref{def:I-Z-per}(ii), but it is not $\Z$-periodic since it does not satisfy Definition \ref{def:I-Z-per}(i). Then, arguing exactly as in Step 3 of the proof of Theorem \ref{thm:Z-per} we obtain again that for every $u\in H_\mu^1(\Q)$, $u>0$ on $\Q$, there exists $w\in H_\mu^1(\Q)$ (a translation of $u$ along  the $x$ axis) for which $E_{q,V}(w,\Q)<E_{q,V}(u,\Q)$. As this rules out existence of ground states in $H_\mu^1(\Q)$, it concludes the proof of Theorem \ref{thm:noexQ}(i).
	
	To prove Theorem \ref{thm:noexQ}(ii) it is enough to copy the set used above on every horizontal line, namely to define
	\[
	V =\{ v_{ij} \mid i \ne  a_n \; \forall n\in \N, \; j\in \Z \}.
	\]
By construction, $V$ satisfies Definition \ref{def:I-Z-per}(i), since it is periodic in the vertical direction, but it does not satisfy neither Definition \ref{def:I-Z-per}(ii) nor Definition \ref{def:I-Z2-per}. Arguing again as in Step 3 of the proof of Theorem \ref{thm:Z-per}, it is easy to show that ground states of $E_{q,V}$ in $H_\mu^1(\Q)$ do not exist for any $q\in(2,4)$ and $\mu>0$, thus completing the proof.
\end{proof}

\section*{Acknowledgements}
\noindent The work has been partially supported by the INdAM GNAMPA project 2023 ``Modelli nonlineari in presenza di interazioni puntuali'' and by the PRIN 2022 project E53D23005450006. The authors wish to thank Paolo Tilli for fruitful discussions during the preparation of this work.

\appendix

\section{Linear problems}
\label{app:linear}
In this appendix we briefly collect some results on linear problems that have been used in the previous analysis and that can be compared with the main results of the paper in the nonlinear setting. 

The content of the next lemma concerning $\lambda(\G)$ (as defined in \eqref{lambdaG}) is standard and well-known but, since it has been widely used in the paper, we report here a short proof for the sake of completeness.
\begin{lemma}
		\label{lem:bot-spec-0}
		Let $\G\in \mathbf{G}$ be either a graph with at least one half-line, a $\Z$-periodic graph or the two-dimensional grid $\Q$. Then $\la(\G)=0$.
	\end{lemma}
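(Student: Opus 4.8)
The lower bound $\la(\G)\ge 0$ is immediate, since the Rayleigh quotient defining $\la(\G)$ in \eqref{lambdaG} is nonnegative. Hence the whole content of the statement is the reverse inequality $\la(\G)\le 0$, which I would obtain by exhibiting, for each of the three families, a sequence $(w_n)\subset H^1(\G)\setminus\{0\}$ whose Rayleigh quotient tends to $0$. The plan is to treat all three cases with a single construction based on a cut-off of the distance function. First I would fix a vertex $x_0\in\V_\G$, write $d(\cdot,x_0)$ for the shortest-path distance to $x_0$, let $B_r\subset\G$ denote the metric ball of radius $r$ centred at $x_0$ (with $|B_r|$ its total length), and set $w_n(x):=\bigl(1-d(x,x_0)/n\bigr)_+$.

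Since $x\mapsto d(x,x_0)$ is $1$-Lipschitz on any metric graph, $w_n\in H^1(\G)$ with $|w_n'|\le 1/n$ a.e.\ and $w_n$ is supported in $\overline{B_n}$, so that $\|w_n'\|_{L^2(\G)}^2\le |B_n|/n^2$. On the other hand $w_n\ge 1/2$ on $B_{n/2}$, whence $\|w_n\|_{L^2(\G)}^2\ge\tfrac14|B_{n/2}|$. Combining the two bounds I get
\[
\frac{\|w_n'\|_{L^2(\G)}^2}{\|w_n\|_{L^2(\G)}^2}\le\frac{4\,|B_n|}{n^2\,|B_{n/2}|}\,,
\]
so that the whole matter reduces to controlling the volume growth of $B_r$ in each case. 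The main step is therefore to bound $|B_r|$ from above by a polynomial and $|B_{n/2}|$ from below. For a graph with a half-line, $B_n$ meets the compact core and propagates at most length $n$ along each of the finitely many half-lines, so $|B_n|\le Cn$ while $|B_{n/2}|\ge c>0$ for $n$ large, and the ratio is $O(1/n)$. For a $\Z$-periodic graph with periodicity cell $\K$, each cell has bounded length and diameter, so (for $r$ large) $B_r$ is contained in and contains a number of cells proportional to $r$; this gives $|B_n|\le Cn$ and $|B_{n/2}|\to+\infty$, again yielding $O(1/n)$. For the grid $\Q$ I would instead invoke the exact count recorded in Remark \ref{rem:rad}: the number of edges in $B_{n+1}\setminus B_n$ equals $4(2n+1)$, so $|B_n|=\sum_{k=0}^{n-1}4(2k+1)=4n^2\asymp n^2$ and the ratio is $O(1/n^2)$. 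In all three cases the right-hand side tends to $0$, giving $\la(\G)\le 0$ and hence $\la(\G)=0$.

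The only genuine obstacle is the two-sided control of $|B_r|$ for $\Z$-periodic graphs, where balls must be compared with unions of periodicity cells: this is precisely where the standing assumptions $\G\in\mathbf{G}$ (uniform lower bound on edge lengths and finite vertex degrees) enter, guaranteeing $|B_r|\asymp r$. Everything else is elementary. I would also emphasize that, although the presence of a half-line makes the first case trivial, a $\Z$-periodic graph need not contain any half-line (e.g.\ an infinite ladder), so the second case genuinely requires the cell-counting argument rather than a reduction to the half-line case.
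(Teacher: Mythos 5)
Your argument is correct in spirit but takes a genuinely different, more unified route than the paper's. The paper treats the three cases separately with test functions adapted to each structure: for a graph with a half-line it simply observes $0\le\lambda(\G)\le\lambda(\R)=0$ by localizing on the half-line; for a $\Z$-periodic graph it takes $u_n\equiv 1$ on the cells $\K_i$, $|i|\le n$, cut off linearly over the finitely many edges joining $\K_{\pm n}$ to $\K_{\pm(n+1)}$, so that $\|u_n'\|_{L^2(\G)}^2$ stays bounded while $\|u_n\|_{L^2(\G)}^2\sim(2n+1)|\K|$; for $\Q$ it uses the indicator of $[0,n]^2$ with a linear cutoff, giving $\|u_n'\|_{L^2(\Q)}^2=O(n)$ against $\|u_n\|_{L^2(\Q)}^2\sim 2n^2$. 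Your single construction $w_n=(1-d(\cdot,x_0)/n)_+$ replaces all this by the volume-growth bound $\|w_n'\|^2/\|w_n\|^2\le 4|B_n|/(n^2|B_{n/2}|)$, which is cleaner and isolates the real mechanism (subquadratic relative growth of metric balls). For the $\Z$-periodic case and for $\Q$ this is complete: the cell-counting argument works (the needed lower bound on the distance between consecutive cells follows from $D\cap R=\emptyset$ together with the uniform lower bound on edge lengths in $\mathbf{G}$, as you indicate), and the count $4(2n+1)$ from Remark \ref{rem:rad} gives $|B_n|=4n^2$ exactly.

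The one genuine gap is in your first case. The lemma assumes only that $\G\in\mathbf{G}$ has at least one half-line; it does not assume a compact core or finitely many edges, yet your estimate $|B_n|\le Cn$ uses both. A graph consisting of an infinite binary tree with unit edges plus one attached half-line lies in $\mathbf{G}$ and has a half-line, but $|B_n|$ grows exponentially, the ratio $4|B_n|/(n^2|B_{n/2}|)$ diverges, and the ball-based test functions give nothing --- even though the conclusion $\lambda(\G)=0$ is still true. The fix is exactly the paper's one-line argument: test the Rayleigh quotient on functions supported on the half-line alone (a fixed bump translated far along it), which yields $\lambda(\G)\le 0$ with no assumption on the rest of the graph. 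In the paper's applications the lemma is only invoked for graphs with finitely many vertices, where your bound $|B_n|\le Cn$ is valid, so nothing downstream breaks; but as the lemma is stated you should either add the half-line localization for case 1 or restrict that case to graphs with finitely many edges.
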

	\begin{proof}
		If $\G$ has at least one half-line the result is obvious, since in this case $0\leq \lambda(\G)\leq \lambda(\R)=0$.
		
		If $\G$ is a $\mathbb{Z}$-periodic graph with periodicity cell $\K$ according to the definition given in Section \ref{sec:prel}, for every $n\in\N$ let $u_n:\G\to\R$ be such that $u_n\equiv 1$ on $\K_i$ for every $0\leq|i|\leq n$, $u_n$ is linearly decreasing from $1$ to $0$ on every edge starting at a vertex of $\K_n$ and ending at a vertex of $\K_{n+1}$ and on every edge starting at a vertex of $\K_{-n}$ and ending at a vertex of $\K_{-n-1}$, and $u_n\equiv0$ elsewhere on $\G$. Since, by periodicity, the number of edges between $\K_{n}$ and $\K_{n+1}$ and that between $\K_{-n}$ and $\K_{-n-1}$ is finite and independent of $n$, as $n\to+\infty$ we have
		\[
		\|u_n\|_{L^2(\G)}^2 = (2n+1)|\K|+O(1) ,\qquad\|u_n'\|_{L^2(\G)}^2=C\,,
		\] 		
		for a suitable constant $C>0$ independent of $n$. Hence, 
		\[
		0\leq \lambda(\G)\leq\lim_{n\to+\infty}\frac{\|u_n'\|_{L^2(\G)}^2}{\|u_n\|_{L^2(\G)}^2}=0
		\]
		which proves the claim in the case of $\Z$-periodic graphs.
		
		Finally, if $\G=\Q$ is the two--dimensional square grid, we think of it as a subset of $\R^2$ with vertices in $\Z^2$. For every $n\in\N$, let then $Q_n$ be the intersection of $\Q$ with the square $[0,n]^2$ in $\R^2$, and define $u_n:\Q\to\R$ so that $u_n\equiv1$ on $Q_n$, $u_n\equiv0$ on $\Q\setminus Q_{n+1}$, and $u_n$ decreases linearly from $1$ to $0$ on every edge of $Q_{n+1}\setminus Q_n$. Direct computations show that, as $n\to+\infty$,
		\[
		\|u_n\|_{L^2(\Q)}^2=2n(n+1)+O(n),\qquad\|u_n'\|_{L^2(\Q)}^2=4(n+1)\,,
		\]
		which is again enough to conclude that $\lambda(\Q)=0$.
	\end{proof}
	\begin{remark}
		Even though the statement of Lemma \ref{lem:bot-spec-0} is limited to the families of graphs covered in this paper, it is evident that the same arguments show that $\lambda(\G)=0$ on every infinite periodic graph satisfying \cite[Definition 4.1.1]{BK}.
	\end{remark}
	
	Let us now consider the following minimization problem. Fix a vertex $\vv\in\V_{\G}$ and a parameter $\alpha>0$ and set
	\begin{equation}
	\label{inf-lin-delta}
	\la_{\alpha,\vv}(\G):=\inf_{w\in H^1(\G)\setminus\{0\}}\f{\|w'\|_{L^2(\G)}^2-\alpha|w(\vv)|^2}{\|w\|_{L^2(\G)}^2}\,.
	\end{equation}
	It is straightforward to see that, if $\lambda_{\alpha,\vv}(\G)$ is attained by some function $u\in H^1(\G)$, then for every $\mu>0$ there exists $v\in H_\mu^1(\G)$ satisfying
	\begin{equation}
	\label{eq:linpro}
	\begin{cases}
	v_{\ee}''=-\la_{\alpha,\vv} v_{\ee} & \quad \text{on every } \ee \in \E_{\G},\\
	\sum_{\ee\succ \vv}v_{e}'(\vv)=-\alpha v(\vv)& \\
	\sum_{\ee\succ \ww}v_{e}'(\ww)=0& \quad \forall\,\ww\in \V_{\G}\setminus \{\vv\},
	\end{cases}
	\end{equation}
	i.e. the linear counterpart of \eqref{prob}. Furthermore, arguing as in the previous sections, it is easy to see that $\lambda_{\alpha,\vv}(\G)$ is attained if it is strictly smaller than $\lambda(\G)$. The next lemma, together with Lemma \ref{lem:bot-spec-0} above, shows that when $\G=\Q$ the linear problem \eqref{eq:linpro} has always a solution, in sharp contrast with the nonlinear case as discussed in Theorem \ref{thm:Vfin1}.
	\begin{lemma}
		\label{lem:bot-spec<0}
		For every $\vv\in\V_{\Q}$ and $\alpha>0$, there results $-\infty<\la_{\alpha,\vv}(\Q)<0$.
	\end{lemma}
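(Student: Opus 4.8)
The plan is to establish the two inequalities separately: boundedness from below by a Gagliardo--Nirenberg argument, and strict negativity by producing a test function with vanishing Dirichlet energy but a fixed value at $\vv$.

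For the lower bound, the key observation is that the point evaluation $|w(\vv)|^2$ is controlled by the full $L^\infty$ norm, so by \eqref{GN-inf} one has $|w(\vv)|^2 \le \|w\|_{L^\infty(\Q)}^2 \le C_\infty \|w\|_{L^2(\Q)} \|w'\|_{L^2(\Q)}$. Normalizing $\|w\|_{L^2(\Q)} = 1$ and writing $t := \|w'\|_{L^2(\Q)}$, the Rayleigh quotient in \eqref{inf-lin-delta} is bounded below by $t^2 - \alpha C_\infty t$, a quadratic in $t$ whose minimum over $t\ge 0$ equals $-\alpha^2 C_\infty^2/4$. This immediately yields $\la_{\alpha,\vv}(\Q) \ge -\alpha^2 C_\infty^2/4 > -\infty$.

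For the strict negativity, by vertex-transitivity of $\Q$ I may assume $\vv$ is the origin, and it suffices to exhibit $w \in H^1(\Q)$ with $w(\vv) \ne 0$ and $\|w'\|_{L^2(\Q)}^2 < \alpha |w(\vv)|^2$; equivalently, to show that the infimum of $\|w'\|_{L^2(\Q)}^2 / |w(\vv)|^2$ over functions with $w(\vv)\ne 0$ is zero. I would take a radial test function (in the sense of Definition \ref{u-rad}) with the logarithmic cutoff profile $\widetilde{w}_R(x) = 1$ on $[0,1]$, $\widetilde{w}_R(x) = 1 - \log x/\log R$ on $[1,R]$, and $\widetilde{w}_R(x) = 0$ for $x \ge R$. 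Then $w_R(\vv) = 1$ for every $R$, and using the radial weight $4(2n+1)$ of Remark \ref{rem:rad} together with $4(2n+1) \le 12 n$ and $\int_n^{n+1} x^{-2}\dx \le n^{-2}$, I would estimate
\[
\|w_R'\|_{L^2(\Q)}^2 = \sum_{n=1}^{R-1} 4(2n+1) \int_n^{n+1} \frac{dx}{x^2 \log^2 R} \le \frac{12}{\log^2 R} \sum_{n=1}^{R-1} \frac1n \le \frac{12(1 + \log R)}{\log^2 R},
\]
which tends to $0$ as $R \to +\infty$. Hence for $R$ large enough $\|w_R'\|_{L^2(\Q)}^2 < \alpha = \alpha |w_R(\vv)|^2$, and inserting $w_R$ into \eqref{inf-lin-delta} gives $\la_{\alpha,\vv}(\Q) < 0$.

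The main obstacle is the strict negativity: it encodes the fact that a single vertex has zero capacity in the two-dimensional grid, which is precisely why the linear problem always admits a solution, in sharp contrast with the nonlinear threshold phenomenon of Theorem \ref{thm:Vfin1}. The delicate point is that the two-dimensional growth $4(2n+1) \sim 8n$ of the spheres in $\Q$ --- the same growth responsible for the weight $g$ and the logarithmic capacity --- makes the Dirichlet energy of the logarithmic cutoff scale like $1/\log R$ rather than remaining bounded away from $0$, and it is exactly this borderline decay that the estimate above must capture correctly. The lower bound, by contrast, is a routine consequence of \eqref{GN-inf}.
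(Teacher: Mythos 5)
Your proposal is correct and follows essentially the same route as the paper: the lower bound via \eqref{GN-inf} after normalizing the $L^2$ norm, and the strict negativity via a radial logarithmic cutoff whose Dirichlet energy, weighted by the sphere sizes $4(2n+1)$ from Remark \ref{rem:rad}, decays like $1/\log R$. The only cosmetic difference is that you normalize the test function so that $w_R(\vv)=1$, whereas the paper keeps the unnormalized profile $f_n$ with $f_n(0)=\log n$ and compares $12\log n$ against $\alpha\log^2 n$; the two computations are identical in substance.
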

	\begin{proof}
		By homogeneity,  $\lambda_{\alpha,\vv}(\Q)=\inf_{w\in H_1^1(\Q)}(\|w'\|_{L^2(\Q)}^2-\alpha|w(\vv)|^2)$. By \eqref{GN-inf}, for every $w\in H_1^1(\Q)$ one has
		\[
		\|w'\|_{L^2(\Q)}^2-\alpha|w(\vv)|^2\geq \|w'\|_{L^2(\Q)}^2-\alpha C_\infty\|w'\|_{L^2(\Q)}\,,
		\]
		so that $\lambda_{\alpha,\vv}(\Q)>-\infty$.
		
		For every $n\in \N\setminus\{0\}$, consider now the radial function $u_{n}\in H^{1}(\Q)$ given by $u_{n}(x)=f_{n}(d(x,\vv))$, with $f_{n}:[0,+\infty)\to \R$ defined as
		\begin{equation*}
		f_{n}(x):=
		\begin{cases}
		\log n& \text{if }x\in [0,1]\\
		\log n -\log x &\text{if } x\in [1,n]\\
		0&\text{if } x\in[n,+\infty)\,.
		\end{cases}
		\end{equation*}
		It is straightforward to check, recalling Remark \ref{rem:rad}, that
		\begin{equation*}
		\|u_{n}'\|_{L^{2}(\Q)}^{2}\leq 12\int_{1}^{n} x|f_{n}'(x)|^{2}\,dx=12 \log n,
		\end{equation*}
		so that combining with $|u_{n}(0)|^{2}=\log^{2}n$ we obtain
		\begin{equation*}
		\lambda_{\alpha,\vv}(\Q)\leq\f{\|u_{n}'\|_{L^{2}(\Q)}^{2}-\alpha|u_{n}(0)|^{2}}{\|u_{n}\|_{L^{2}(\Q)}^{2}}\leq \f{12\log n -\alpha\log^{2} n }{\|u_{n}\|_{L^{2}(\Q)}^{2}}.
		\end{equation*}
		Since $12\log n-\alpha\log^{2} n<0$ when $n$ is large enough, we conclude.
	\end{proof}

\end{document}